\documentclass{amsart}
\usepackage{amsmath}
\usepackage{amssymb}
\usepackage{amsfonts}
\usepackage{mathrsfs}
\usepackage{latexsym}
\usepackage{amsbsy}
\usepackage[all]{xy}
\usepackage{xypic}
\usepackage{amscd}
\usepackage[dvips]{graphicx}
\usepackage{color}

\newtheorem{Thm}{Theorem}[section]
\newtheorem{Prop}[Thm]{Proposition}
\newtheorem{Lem}[Thm]{Lemma}
\newtheorem{Cor}[Thm]{Corollary}

\theoremstyle{definition}

\theoremstyle{remark}
\newtheorem{remark}[Thm]{Remark}
\newtheorem{Problem}[Thm]{Problem}

\DeclareMathOperator{\Ext}{Ext}
\DeclareMathOperator{\Hom}{Hom}

\DeclareMathOperator{\soc}{soc}
\DeclareMathOperator{\Top}{top}
\DeclareMathOperator{\dimv}{\underline{dim}}
\DeclareMathOperator{\rep}{rep}

\DeclareMathOperator{\modl}{mod}
\DeclareMathOperator{\ad}{ad}

\DeclareMathOperator{\Sym}{Sym}

\def\mfk{\mathfrak}
\def\mfkg{\mathfrak{g}}
\def\mb{\mathbb}
\def\mc{\mathcal}

\def\mchp{\mc{H}^+(\mathscr{A})}
\def\mchm{\mc{H}^-(\mathscr{A})}
\def\mca{\mathscr{A}}

\def\mcp{\mc{P}}
\def\mci{\mc{I}}

\def\mbc{\mb{C}}

\def\mbz{\mb{Z}}
\def\lra{\longrightarrow}
\def\ma{\mathscr{A}}
\def\ges{\geqslant}

\def\mo{\mathscr{O}}
\def\Coh{\text{Coh}}
\def\Cohx{\Coh(\mathbb{X})}

\def\dh{\mathbf{DH}}

\begin{document}

\title[Hall algebra approach to Drinfeld's presentation]{Hall algebra approach to Drinfeld's presentation of quantum loop algebras}

\author{Rujing Dou}
\address{Department of Mathematics, Tsinghua University, Beijing 100084, P.R.China}
\email{drj05@mails.thu.edu.cn}

\author{Yong Jiang}
\address{Department of Mathematics, Tsinghua University, Beijing 100084, P.R.China}
\curraddr{Fakult\"{a}t f\"{u}r Mathematik, Universit\"{a}t Bielefeld, Postfach 100131, 33609 Bielefeld, Germany}
\email{yjiang@math.uni-bielefeld.de}

\author{Jie Xiao}
\address{Department of Mathematics, Tsinghua University, Beijing 100084, P.R.China}
\email{jxiao@math.tsinghua.edu.cn}

\thanks{The research was supported in part by NSF of China (No. 10631010) and by
NKBRPC (No. 2006CB805905). Y. Jiang was also partly supported by SFB/TR 45 in Universit\"{a}t Bonn and SFB 701 in Universit\"{a}t Bielefeld, Germany}

\subjclass[2010]{14H60, 17B37, 18F20}

\keywords{quantum loop algebra, Drinfeld's presentation, Hall
algebra, weighted projective line, coherent sheaf}

\bigskip

\begin{abstract}
The quantum loop algebra $U_{v}(\mathcal{L}\mathfrak{g})$ was
defined as a generalization of the Drinfeld's new realization of the
quantum affine algebra to the loop algebra of any Kac-Moody algebra
$\mathfrak{g}$. It has been shown by Schiffmann that the Hall algebra of the category of coherent sheaves on
a weighted projective line is closely related to the quantum loop algebra
$U_{v}(\mathcal{L}\mathfrak{g})$, for some $\mathfrak{g}$ with a star-shaped Dynkin diagram.
In this paper we study Drinfeld's presentation of $U_{v}(\mathcal{L}\mathfrak{g})$
in the double Hall algebra setting, based on Schiffmann's
work. We explicitly find out a collection of generators of the double composition algebra $\mathbf{DC}(\Coh(\mathbb{X}))$
and verify that they satisfy all the Drinfeld relations.
\end{abstract}

\maketitle

\section{Introduction}

\subsection{}
Let $\mathfrak{g}$ be a Kac-Moody algebra, $U(\mathfrak{g})$ be its
universal enveloping algebra. The Drinfeld-Jimbo quantum group
$U_{v}(\mathfrak{g})$ is defined by a collection of generators and
relations (see \ref{subsec quantum group}), which is a certain
deformation of the Chevalley generators and Serre relations for
$U(\mathfrak{g})$. When $\mathfrak{g}$ is affine, it is
well-known that $\mathfrak{g}$ can be constructed as (a central
extension of) the loop algebra $\mathcal{L}\mathfrak{g}_{0}$ of some
simple Lie algebra $\mathfrak{g}_{0}$. In this case Drinfeld gave
another set of generators and relations of $U_{v}(\mathfrak{g})$
known as \textit{Drinfeld's new realization} of quantum affine
algebras. This new presentation can be treated as a certain
deformation of the loop algebra presentation of $\mathfrak{g}$.
The isomorphism of the two presentations of $U_{v}(\mathfrak{g})$ was proved by
Beck \cite{Be} (also see \cite{J}). One can define the quantum loop algebra $U_{v}(\mathcal{L}\mathfrak{g})$
for any Kac-Moody algebra $\mathfrak{g}$ as a generalization of Drinfeld's presentation for quantum affine algebras (see \ref{subsec quantum loop alg}).

\subsection{}\label{subsec problem}
The Ringel-Hall algebra approach to quantum groups developed since
1990's, which shows a deep relationship between Lie theory and
finite dimensional hereditary algebras. More precisely, let $Q$
be the quiver whose underlying graph is the Dynkin diagram of the
Kac-Moody algebra $\mathfrak{g}$. Consider the category of finite
dimensional representations of $Q$ over a finite field
$k=\mathbb{F}_{q}$, denoted by $\modl(kQ)$. Due to Ringel and Green
(\cite{R}, \cite{Gr}), the composition subalgebra of the Hall
algebra $\mathbf{H}(\modl(kQ))$ is isomorphic to the positive part
of the quantum group $U_{v}^{+}(\mathfrak{g})$ where $v$ specializes
to $\sqrt{q}$. This result was generalized to the whole quantum
group by using the technique of Drinfeld double for Hopf algebras
\cite{X} (see \ref{Ringel-Green-Xiao}).

Thus it is quite natural to consider the following problem:

\begin{Problem}\label{prob}
How to understand Drinfeld's presentation of quantum affine
algebras (and more generally, quantum loop algebras) in the Hall algebra setting?
\end{Problem}

One possible way to solve the problem for quantum affine algebras is to explain Beck's isomorphism in the language of Hall algebras.
For type $\tilde{A}$ Hubery has given the answer for the positive part
$U_{v}^{+}(\widehat{\mathfrak{sl}}_{n})$ using nilpotent
representations of cyclic quivers \cite{H}. But it seems not easy to
generalize his method to other types. We should also mention
that McGerty \cite{Mc} has given the Drinfeld generators for the
positive part $U_{v}^{+}(\widehat{\mathfrak{sl}}_{2})$ using
representations of the Kronecker quiver \cite{Mc}.

\subsection{}
On the other hand, in his remarkable paper \cite{K} Kapranov
observed that there are connections between the Hall algebra
of the category of coherent sheaves on a smooth projective curve
$X$ and Drinfeld's new realization of the quantum affine algebra. And
when $X$ is the projective line, he constructed an isomorphism between
a subalgebra of the Hall algebra and another positive part (compared with the
standard one, see \ref{subsec quantum group}) of
$U_{q}(\widehat{\mathfrak{sl}}_{2})$ (also see \cite{BK}).

This result was generalized by Schiffmann \cite{S} using Hall
algebras of the categories of coherent sheaves on weighted projective
lines (introduced in \cite{GL}) as following:

When the Dynkin diagram $\Gamma$ of $\mathfrak{g}$ is a star-shaped
graph, he defined a certain positive part $U_{v}(\hat{\mathfrak{n}})$ of the quantum loop algebra
$U_{v}(\mathcal{L}\mathfrak{g})$ (Note that there is no standard positive part of $U_{v}(\mathcal{L}\mathfrak{g})$
since in general the loop algebra $\mathcal{L}\mathfrak{g}$ is not a Kac-Moody
algebra). And he established an epimorphism from $U_{v}(\hat{\mathfrak{n}})$ to a subalgebra
$\mathbf{C}(\Coh\mathbb{X})$ of the Hall algebra
$\mathbf{H}(\Coh\mathbb{X})$, where $\mathbb{X}$ is a weighted
projective line associated to $\Gamma$. Moreover, when
$\mathbb{X}$ is of parabolic or elliptic type (the corresponding
$\mathfrak{g}$ is of finite or affine type), the epimorphism is an isomorphism (see Theorem \ref{thm Schiffmann}). This means
that the Hall algebras for weighted projective lines is the right framework to consider Problem \ref{prob} for general quantum loop algebras.

However, the problem was not completely solved in Schiffmann's work. Namely not all Drinfeld's generators and relations were explicitly found out in the corresponding Hall algebra. In fact the composition algebra $\mathbf{C}(\Coh(\mathbb{X}))$ is not generated by part of Drinfeld generators (some Chevalley generators are involved, see \ref{subsec Schiffmann's result} for details). Moreover, only the positive half $U_{v}(\hat{\mathfrak{n}})$ was linked to the Hall algebra. Thus Drinfeld's presentation for the whole quantum loop algebra has not been fully  understood yet.

\subsection{}
In this paper we study the problem in the double Hall algebra $\dh(\Coh\mathbb{X})$, which is the reduced Drinfeld double of the Hall algebra
$\mathbf{H}(\Coh\mathbb{X})$. We define a subalgebra $\mathbf{DC}(\Coh\mathbb{X})$ of $\dh(\Coh\mathbb{X})$, called double composition algebra, and show that a collection of generators of $\mathbf{DC}(\Coh\mathbb{X})$ satisfy all Drinfeld's relations (Theorem \ref{thm main}). Thus we have an epimorphism $\Xi$ from the whole quantum loop algebra $U_{v}(\mathcal{L}\mathfrak{g})$ to the double composition algebra $\mathbf{DC}(\Coh\mathbb{X})$.

Let us briefly explain our method. First we consider the generators
and relations in $\mathbf{H}(\Coh(\mathbb{X}))$. Note that we assume that the
Dynkin diagram of the Kac-Moody algebra $\mathfrak{g}$ is a star-shaped graph $\Gamma$ (see \ref{subsec star-shaped
graph}). Each branch of $\Gamma$ corresponds to a subalgebra isomorphic to
$U_{v}(\widehat{\mathfrak{sl}}_{n})$. Thus we can use the
results in \cite{H} to find out Drinfeld generators
in such subalgebras. For the central vertex we keep the
generators given in \cite{S}. Then it remains to check the relations (Some of them have been verified in \cite{S}, see \ref{subsec known
relations}).

To check all Drinfeld's relations in the double Hall algebra $\dh(\Coh\mathbb{X})$, the
key part is to verify the relations involving both positive and negative
part. For these relations we have to investigate the
comultiplication in details. This is much more complicated than the non-weighted
case $\Coh(\mathbb{P}^1)$, which has been studied in \cite{BK} \cite{BS}. However, we show that most terms appearing in the
comultiplication of a Drinfeld generator corresponding to the central vertex do not effect our calculation (see the proof of
lemma \ref{lem comult considered 1} and \ref{lem comult considered
2}). Thus the problem can be solved.

Note that in a recent work \cite{BS} of Burban and Schiffmann, the
case of $U_{v}(\widehat{\mathfrak{sl}}_{2})$ has already been studied in the double Hall algebra setting.
Our results coincide with theirs in this case. In particular, the epimorphism $\Xi$ is an isomorphism.

\subsection{}
The paper is organized as follows: In section \ref{sec Hall alg and
Drinfeld double} we recall basic notions of Hall algebra and
double Hall algebra, for details one can see \cite{R1} and \cite{X}.
In section \ref{sec Drinfeld presentation of quantum loop} we recall
the definition of quantum loop algebra, see \cite{S} or the Appendix of \cite{S0}.
We give a brief review of the theory of coherent sheaves on weighted projective lines in section
\ref{sec coh sheaf on w p l}, the main reference for this section is
\cite{GL}. The main result of the paper (Theorem \ref{thm main}) is
stated in section \ref{sec main result}. The proof of the main
theorem consists of the next three sections. More precisely, in
section \ref{sec relations in a tube} we prove the relations
satisfied by elements corresponding to torsion sheaves lying in a non-homogeneous tube, following
\cite{H}. Section \ref{sec relation of positive part} is devoted to
the proof of relations in the positive Hall algebra. The
remaining relations, especially those involving elements
from both positive and negative part, are proved in section \ref{sec
relation of the whole part}. In section \ref{sec remarks}
we give some remarks for the quantum affine algebras. In particular,
we explain that the homomorphism $\Xi$ given by our main theorem
is not induced from a derived equivalence functor for many cases.

\section{Hall algebras and their Drinfeld doubles}\label{sec Hall alg and Drinfeld double}

\subsection{Hereditary category}\label{subsec hereditary cat}
Let $k=\mb{F}_q$ be a finite field with $q$ elements, and
$\mathscr{A}$ be an essentially small abelian category. We assume that $\ma$ is
$k$-linear, Hom-finite and Ext-finite. That is, for all objects $X$,
$Y$ and $Z$ in $\ma$, the sets $\text{Hom}(X,Y)$ and
$\text{Ext}^1(X,Y)$ are finite dimensional $k$-vector spaces and the
composition of morphisms
$\text{Hom}(X,Y)\times\text{Hom}(Y,Z)\lra\text{Hom}(X,Z)$ is
$k$-bilinear. We also assume that $\ma$ is \emph{hereditary}, i.e.
$\text{Ext}^i(-,-)$ vanishes for all $i\ges 2$.

Let $\mcp$ be the set of isomorphism classes of objects in
$\mathscr{A}$ and $K_0(\mathscr{A})$ be the Grothedieck group of
$\mathscr{A}$. For any $\alpha\in\mcp$ we choose a representative
$V_{\alpha}\in\alpha$. And for each object $M$ in $\ma$, denote by
$[M]$ its image in $K_0(\mathscr{A})$. Then the \emph{Euler
form}
$$\langle[V_{\alpha}],[V_{\beta}]\rangle:=\dim_{k}\Hom_{\mathscr{A}}(V_{\alpha},V_{\beta})-
\dim_{k}\Ext_{\mathscr{A}}^{1}(V_{\alpha},V_{\beta})$$ is a
well-defined bilinear form on $K_0(\mathscr{A})$. The {\em symmetric Euler form} is defined by
$(\mu,\nu)$=$\langle\mu,\nu\rangle+\langle\nu,\mu\rangle$, for $\mu,\nu\in K_0(\mathscr{A})$.

\subsection{The Hall algebra}\label{subsec Hall alg}
For any $\alpha\in\mcp$, denote by $a_{\alpha}$ the cardinality of
the automorphism group of $V_{\alpha}$. Set $v=\sqrt{q}$.

For any $\alpha$, $\beta$ and $\gamma$ in $\mcp$, the {\em Hall
number} $g^{\gamma}_{\alpha\beta}$ is defined to be the number of
subobjects $X$ of $V_{\gamma}$ satisfying $X\in\beta$ and
$V_{\gamma}/X\in\alpha$.

The \emph{Hall algebra} associated to the category $\mca$, denoted
by $\mathbf{H}(\mca)$, is defined as follows: As a $\mbc$-vector space, it has a basis $\{u_{\alpha}|\alpha\in\mcp\}$. The multiplication is given by the following formula:
$$u_{\alpha}u_{\beta}=\sum_{\gamma\in\mcp}v^{\langle\alpha,\beta\rangle}g^{\gamma}_{\alpha\beta}u_{\gamma},\quad \forall\ \alpha,\beta\in\mcp.$$

It is easy to see that $\mathbf{H}(\mca)$ is an associative algebra with $1=u_{0}$.

\subsection{The extended Hall algebra}\label{subsec extended Hall alg}
The \emph{extended Hall algebra} $\mathcal{H}(\mca)$ is defined by adding the Grothendieck group $K_{0}(\mca)$ as a torus to the
Hall algebra $\mathbf{H}(\mca)$. More precisely, as a $\mbc$-vector space,
$\mathcal{H}(\mca)$ has a basis $\{K_{\mu}u_{\alpha}|\mu\in K_{0}(\mca),\ \alpha\in\mcp\}$. And the multiplication is given by the one inside $\mathbf{H}(\mca)$ together with the following additional rules:
$$K_{\mu}K_{\nu}=K_{\mu+\nu},\quad \forall\ \mu,\nu\in K_{0}(\mca),$$
$$K_{\mu}u_{\alpha}=v^{(\mu,\alpha)}u_{\alpha}K_{\mu},\quad \forall\ \mu\in K_{0}(\mca), \alpha\in\mcp.$$

If $\mca$ is a \emph{length category} (i.e. each object $M$ has a
filtration $0=M_{0}\subset M_{1}\subset\cdots\subset M_{n}=M$ such
that $M_{i+1}/M_{i}$ is a simple object for any $i$), then $\mathcal{H}(\mca)$ has a Hopf algebra structure with the following
comultiplication $\Delta$, counit $\epsilon$ and antipode $S$:
\begin{align*}
&\Delta(K_{\mu})=K_{\mu}\otimes K_{\mu},\\
&\Delta(u_{\gamma})=\sum_{\alpha,\beta\in\mcp}v^{\langle\alpha,\beta\rangle}
\frac{a_{\alpha}a_{\beta}}{a_{\gamma}}
g^{\gamma}_{\alpha\beta}u_{\alpha}K_{\beta}\otimes u_{\beta},\\
&\epsilon(u_{\alpha})=\delta_{\alpha,0},\ \epsilon(K_{\mu})=1,\\
&S(K_{\mu})=K_{-\mu}, S(u_{0})=0,\\
S(u_{\gamma})=\sum_{m\ge 1} (-1)^m &\sum_{\beta\in\mcp,\alpha_{1}\cdots\alpha_{m}\in\mcp_1} v^{2\sum_{i<j}\langle \alpha_{i},\alpha_{j}\rangle}
\frac{a_{\alpha_{1}}\cdots a_{\alpha_{m}}}{a_{\gamma}}g^{\gamma}_{\alpha_{1}\cdots\alpha_{m}}g^{\beta}_{\alpha_{1}\cdots\alpha_{m}}K_{-\gamma}u_\beta.
\end{align*}
where $\mcp_1=\mcp\setminus\{0\}$ and
$g^{\gamma}_{\alpha_{1}\cdots\alpha_{m}}$ is the number of all
filtrations
$$0=M_{0}\subset M_{1}\subset\cdots\subset M_{m}=V_{\gamma}$$ such that
$M_{i+1}/M_{i}\simeq V_{\alpha_{i}}$ for any $i$.

\begin{remark}
(1). The Hall algebra can be defined for any finitary abelian
category and it is both an algebra and a coalgebra. However, the heredity is needed to endow
the Hall algebra with the structure of a bialgebra. For details one can see Schiffmann's lecture notes
\cite{S0}.

(2). The comultiplication (resp. antipode) was first defined by
Green~\cite{Gr} (resp. Xiao~\cite{X}) in the case that $\mca$ is the
category of finite dimensional modules over a finite dimensional
hereditary algebra.

(3). If $\mathscr{A}$ is not a length category, then the comultiplication $\Delta$ takes value in the completion $\mathcal{H}(\mca)\widehat{\otimes}\mathcal{H}(\mca)$. And $\mathcal{H}(\mca)$ is a \emph{topological bialgebra} (see \cite{S1}).
\end{remark}

\subsection{The Drinfeld double of the Hall algebra}\label{subsec double Hall alg}
Now we write $\mchp$ for the extended Hall algebra
$\mathcal{H}(\mca)$ defined above. And we also write the basis
elements $u_{\alpha}^{+}$ instead of $u_{\alpha}$.

The ``negative" extended Hall algebra, denoted by $\mchm$, is defined
to be the $\mbc$-algebra with basis $\{K_{\mu}u_{\alpha}^{-}:\mu\in
K_{0}(\mca),\alpha\in\mcp\}$ and the following multiplication rules:
$$u_{\alpha}^{-}u_{\beta}^{-}=v^{\langle\alpha,\beta\rangle}\sum_{\gamma\in\mcp}
g^{\gamma}_{\alpha\beta}u_{\gamma}^{-},\
K_{\mu}K_{\nu}=K_{\mu+\nu},$$
$$K_{\mu}u_{\alpha}^{-}=v^{-(\mu,\alpha)}u_{\alpha}^{-}K_{\mu}.$$

Similarly, if $\mathscr{A}$ is a length category, $\mchm$ has a Hopf
algebra structure:
\begin{align*}
&\Delta(K_{\mu})=K_{\mu}\otimes K_{\mu},\\
&\Delta(u_{\gamma}^{-})=\sum_{\alpha,\beta\in\mcp}v^{\langle\beta,\alpha\rangle}
\frac{a_{\alpha}a_{\beta}}{a_{\gamma}}g^{\gamma}_{\beta\alpha}u_{\alpha}^{-}\otimes
u_{\beta}^{-}K_{-\alpha},\\
&\epsilon(u_{\alpha}^{-})=\delta_{\alpha,0},\ \epsilon(K_{\mu})=1,\\
&S(K_{\alpha})=K_{-\alpha}, S(u_{0}^{-})=0,\\
S(u_{\gamma}^{-})=\sum_{m\ge 1} (-1)^m &\sum_{\beta\in\mcp,\alpha_{1},\cdots,\alpha_{m}\in\mcp_1}v^{2\sum_{i<j}\langle\alpha_{i},\alpha_{j}\rangle}\frac{a_{\alpha_{1}}\cdots
a_{\alpha_{m}}}{a_{\gamma}} g^{\gamma}_{\alpha_{1}\cdots\alpha_{m}}
g^{\beta}_{\alpha_{m}\cdots\alpha_{1}}u_{\beta}^{-}K_{\gamma}.
\end{align*}

Actually in this case $\mchm$ is the dual Hopf algebra of $\mchp$
with opposite comultiplication.

Following Ringel~\cite{R1}, we define a bilinear form $\varphi
:\mchp\times\mchm\lra\mbc$ by
$$\varphi(K_{\mu}u_{\alpha}^{+},K_{\nu}u_{\beta}^{-})=v^{-(\mu,\nu)-(\alpha,\nu)+(\mu,\beta)}\frac{1}{a_{\alpha}}\delta_{\alpha\beta},$$
for any $\mu$, $\nu$ in $K_{0}(\mca)$ and $\alpha$, $\beta$ in
$\mcp$.

The bilinear form defined above is a skew-Hopf pairing on
$\mchp\times\mchm$. It induces a Hopf algebra structure on
$\mchp\otimes\mchm$ as follows: Let $\widetilde{\dh}(\mca)$ be the
free product of algebras $\mchp$ and $\mchm$ subject to the
following relations $D(u^-_\alpha,u^+_\beta)$ for all $u^-_\alpha\in
\mchm$ and $u^+_\beta\in \mchp$:
$$\sum_{i,j}a_i^{(1)-}b_j^{(2)+}\varphi(b_j^{(1)+},a_i^{(2)-})=
\sum_{i,j}b_j^{(1)+}a_i^{(2)-}\varphi(b_j^{(2)+},a_j^{(1)-}),$$
where $\Delta(u_{\alpha}^{-})=\sum_i a_i^{(1)-}\otimes a_i^{(2)-}$,
$\Delta(u_{\beta}^{+})=\sum_j b_j^{(1)+}\otimes b_j^{(2)+}$. Then $\widetilde{\dh}(\mca)$ is a (topological) Hopf algebra.

The \emph{double Hall algebra} of the category $\ma$ is defined to be
the quotient of $\widetilde{\dh}(\mca)$ by the ideal generated by $\{K_{\mu}\otimes 1-1\otimes
K_{-\mu}:\mu\in K_{0}(\mca)\}$. This algebra is called the
\emph{reduced Drinfeld double} of the pairing
$(\mchp,\mchm,\varphi)$, denoted by $\dh(\mca)$. For details see \cite{X}.

The double Hall algebra has the following triangular decomposition
$$\dh(\mca)=\mathbf{H}^{+}(\ma)\otimes\mathbf{T}\otimes\mathbf{H}^{-}(\ma),$$
where $\mathbf{T}$ (resp. $\mathbf{H}^{+}(\ma)$, $\mathbf{H}^{-}(\ma)$) is the subalgebra of $\dh(\mca)$ generated by
$\{K_{\mu}:\mu\in K_{0}(\mca)\}$ (resp. $\{u_{\alpha}^{+}:\alpha\in\mcp\}$,
$\{u_{\alpha}^{-}:\alpha\in\mcp\}$). It is easy to see that
$$\mathcal{H}^{-}(\mca)\simeq\mathbf{T}\otimes\mathbf{H}^{-}(\mca),
\mathcal{H}^{+}(\mca)\simeq\mathbf{H}^{+}(\mca)\otimes\mathbf{T}.$$

\subsection{The Hall algebra of $\modl kQ$}\label{Ringel-Green-Xiao}
Let $Q$ be a finite quiver, $I$ be the set of vertices of $Q$. Denote by
$\modl kQ$ the category of finite dimensional nilpotent
representations of $Q$ over $k$. This is a
hereditary length category. So we have the Hall algebra
$\mathbf{H}(\modl kQ)$ and the extended Hall algebra
$\mathcal{H}(\modl kQ)$. The \emph{composition subalgebra}
$\mathbf{C}(\modl kQ)$ is defined to be the subalgebra of
$\mathbf{H}(\modl kQ)$ generated by $u_{S_{i}}$ ($i\in I$), where
$S_{i}$ is the simple module in $\modl kQ$ corresponding to the
vertex $i$. The composition subalgebra provides a realization of the
positive part of the quantum group $U_{v}(\mathfrak{g})$ (see the next section for definitions):

\begin{Thm}[Ringel \cite{R}, Green \cite{Gr}]\label{thm Ringel-Green}
Let $\mathfrak{g}$ be the Kac-Moody algebra whose Dynkin diagram is the underlying graph of $Q$. Then we have an isomorphism
$$\mathbf{C}(\modl kQ)\simeq U_{v}^{+}(\mathfrak{g}),$$
where the image of the generator $u_{S_{i}}$ is precisely the Chevalley generator $E_{i}$ for each $i\in I$.
\end{Thm}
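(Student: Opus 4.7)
The plan is to construct a surjective algebra homomorphism $\Phi\colon U_v^+(\mathfrak{g})\to \mathbf{C}(\modl kQ)$ with $E_i\mapsto u_{S_i}$, and then to establish injectivity via Green's non-degenerate skew-Hopf pairing. For well-definedness I first check the quantum Serre relations inside $\mathbf{H}(\modl kQ)$. When $i,j$ are not joined by an edge, $\Hom(S_i,S_j)=\Ext^1(S_i,S_j)=0$ gives $u_{S_i}u_{S_j}=u_{S_j}u_{S_i}$ directly from the multiplication formula. When they are joined, a reduction to the rank-two subquiver on $\{i,j\}$, together with an explicit enumeration of the finitely many indecomposable extensions involved and the Euler form $\langle S_i,S_j\rangle$, yields
\[
\sum_{k=0}^{1-a_{ij}}(-1)^k \begin{bmatrix}1-a_{ij}\\ k\end{bmatrix}_v u_{S_i}^{1-a_{ij}-k}\,u_{S_j}\,u_{S_i}^k=0,
\]
which is Ringel's original calculation. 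Surjectivity of $\Phi$ is automatic from the definition of $\mathbf{C}(\modl kQ)$ as the subalgebra generated by the $u_{S_i}$.

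For injectivity I would invoke Green's theorem. The comultiplication defined in \ref{subsec extended Hall alg}, restricted to the generators $u_{S_i}$ after adjoining the torus, matches the standard Hopf comultiplication $\Delta(E_i)=E_i\otimes 1+K_i\otimes E_i$ on $U_v^+(\mathfrak{g})$. Extending $\Phi$ to the negative copy and to the Cartan part produces a homomorphism of Drinfeld doubles that pulls the skew-Hopf pairing $\varphi$ back to a symmetric bilinear form on $U_v^+(\mathfrak{g})$ which is a Hopf pairing with itself and takes a specific nonzero value on each pair $(E_i,E_i)$ matching the Hall side. By the uniqueness and non-degeneracy of Lusztig's bilinear form on $U_v^+(\mathfrak{g})$, the pulled-back form coincides with Lusztig's up to scalar, hence $\ker\Phi$ lies in its radical, which is trivial.

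The main obstacle is establishing that the comultiplication of \ref{subsec extended Hall alg} really is a twisted bialgebra map on $\mathbf{H}(\modl kQ)$ (Green's compatibility identity) and that its restriction to $\mathbf{C}(\modl kQ)$ remains non-degenerate under $\varphi$. Green's compatibility reduces to a delicate double counting of short exact sequences $0\to M'\to M\to M''\to 0$ with prescribed subquotient types, whereas non-degeneracy of $\varphi$ on the composition subalgebra ultimately rests on the Gram matrix of monomials in the simples being controlled by the Euler form. Both are purely combinatorial in nature, but they form the technical heart of the Ringel-Green theorem.
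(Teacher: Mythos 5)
The paper gives no proof of this statement: it is quoted as a known result of Ringel and Green, with the Serre-relation computation and the non-degenerate Hopf pairing argument left to the cited sources. Your outline reconstructs exactly that standard argument (Ringel's rank-two verification of the quantum Serre relations for surjectivity, then Green's compatibility of $\Delta$ with the twisted multiplication and the identification of the pulled-back pairing with Lusztig's non-degenerate form for injectivity), so it is correct and follows the same route as the references the paper relies on.
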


Furthermore, $\mathbf{C}(\modl kQ)\otimes\mathbf{T}$ is a Hopf
subalgebra of $\mathcal{H}(\modl kQ)$. So we can construct the
reduced Drinfeld double $\mathbf{DC}(\modl kQ)$, which is a
subalgebra of $\mathbf{DH}(\modl kQ)$. In this way the above Ringel-Green theorem is extended to the whole quantum group:

\begin{Thm}[\cite{X}]\label{thm Xiao}
The double composition algebra is isomorphic to the quantum group:
$$\mathbf{DC}(\modl kQ)\simeq U_{v}(\mathfrak{g}),$$
where $u_{S_{i}}^{+}\mapsto E_{i}$, $u_{S_{i}}^{-}\mapsto -v^{-1}F_{i}$, $K_{i}\mapsto K_{i}$.
\end{Thm}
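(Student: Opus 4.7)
The plan is to build the isomorphism by combining two copies of Ringel--Green with the Drinfeld double construction recalled in \ref{subsec double Hall alg}, then matching relations one by one.

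First, I would apply Theorem \ref{thm Ringel-Green} to both sides: it gives $\mathbf{C}(\modl kQ)\simeq U_v^+(\mathfrak{g})$ via $u_{S_i}^+\mapsto E_i$, and by the obvious isomorphism $\mathbf{H}(\mca)\simeq\mathbf{H}^-(\mca)^{\mathrm{op}}$ the same theorem yields $\mathbf{H}^-(\mca)\supset\mathbf{C}^-(\modl kQ)\simeq U_v^-(\mathfrak{g})$ via $u_{S_i}^-\mapsto -v^{-1}F_i$ (the scalar $-v^{-1}$ is the normalization that later makes the cross relation come out clean). The torus $\mathbf{T}$ is the group algebra of $K_0(\modl kQ)$, which is the free abelian group on $\{[S_i]\}$, so $\mathbf{T}\simeq\mathbb{C}[K_i^{\pm 1}]$ under $K_{[S_i]}\mapsto K_i$. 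The reduction $K_\alpha\otimes 1=1\otimes K_{-\alpha}$ used to pass from $\widetilde{\mathbf{DH}}$ to $\mathbf{DH}$ is exactly what identifies the Cartan parts of the two sides.

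Next I would check the relations of $U_v(\mathfrak{g})$ hold in $\mathbf{DC}(\modl kQ)$. The commutation of the $K_i$'s is immediate from $K_\mu K_\nu=K_{\mu+\nu}$. The semidirect-product relations $K_\mu u_\alpha^{\pm}=v^{\pm(\mu,\alpha)}u_\alpha^{\pm}K_\mu$ of \ref{subsec extended Hall alg}--\ref{subsec double Hall alg} specialize, with $\mu=[S_i]$ and $\alpha=[S_j]$, to $K_iE_jK_i^{-1}=v^{(\alpha_i,\alpha_j)}E_j$ and the analogous relation for $F_j$, matching the quantum group. The quantum Serre relations for the $E_i$ (resp.\ for the $F_i$) are already encoded in $\mathbf{C}(\modl kQ)$ (resp.\ $\mathbf{C}^-(\modl kQ)$) by Theorem \ref{thm Ringel-Green} applied to each half.

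The main obstacle is the cross relation $[E_i,F_j]=\delta_{ij}(K_i-K_i^{-1})/(v-v^{-1})$. This is precisely what the Drinfeld double relation $D(u^-_{S_i},u^+_{S_j})$ supplies, but extracting it requires a careful computation with the comultiplications of \ref{subsec extended Hall alg}--\ref{subsec double Hall alg} and the pairing $\varphi$. Because $S_i$ and $S_j$ are simple, $\Delta(u^+_{S_j})=u^+_{S_j}\otimes 1+K_{[S_j]}\otimes u^+_{S_j}$ and $\Delta(u^-_{S_i})=1\otimes u^-_{S_i}+u^-_{S_i}\otimes K_{-[S_i]}$, so only a small number of terms contribute. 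Using $\varphi(u^+_{S_j},u^-_{S_i})=\delta_{ij}/a_{S_i}=\delta_{ij}/(q-1)$ together with the torus values of $\varphi$, one obtains
\[
u_{S_i}^{-}u_{S_j}^{+}-u_{S_j}^{+}u_{S_i}^{-}=\frac{\delta_{ij}}{q-1}\bigl(K_{[S_i]}-K_{-[S_i]}\bigr),
\]
and multiplying by the scaling $-v^{-1}$ from the identification $u^-_{S_i}\mapsto -v^{-1}F_i$ converts $1/(q-1)$ into $1/(v-v^{-1})$, yielding exactly the commutator relation in $U_v(\mathfrak{g})$. All relations of $U_v(\mathfrak{g})$ are thus verified, so the assignment extends to a surjection $U_v(\mathfrak{g})\twoheadrightarrow\mathbf{DC}(\modl kQ)$.

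Finally, injectivity follows from the triangular decomposition $\mathbf{DC}(\modl kQ)=\mathbf{C}^+\otimes\mathbf{T}\otimes\mathbf{C}^-$ recalled at the end of \ref{subsec double Hall alg}, which matches the PBW triangular decomposition of $U_v(\mathfrak{g})$ factor by factor thanks to the two Ringel--Green isomorphisms already established. The principal subtlety throughout is the bookkeeping of Hall-algebra normalization constants (the factor $v^{\langle\alpha,\beta\rangle}$, the $a_\alpha a_\beta/a_\gamma$ in $\Delta$, and the choice of sign $-v^{-1}$); once these are aligned as above, the proof reduces to the checks listed.
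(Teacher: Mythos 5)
The paper states this theorem only as a quoted result of Xiao \cite{X} and supplies no proof of its own, and your outline is precisely the argument of \cite{X}: Ringel--Green applied to each half of the double, the Drinfeld-double relation $D(u^-_{S_i},u^+_{S_j})$ evaluated on simple modules (whose coproducts have exactly the two terms you list) to produce the $[E_i,F_j]$ relation, and injectivity read off from matching the triangular decomposition of $\mathbf{DC}(\modl kQ)$ with that of $U_v(\mathfrak{g})$ factor by factor. The proposal is therefore essentially the same proof; the one step that genuinely needs care is the scalar bookkeeping in the cross relation (the identity $v-v^{-1}=(q-1)/v$ is what lets the normalization $u^-_{S_i}\mapsto -v^{-1}F_i$ turn $1/a_{S_i}=1/(q-1)$ into $1/(v-v^{-1})$), and you have flagged exactly that subtlety.
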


\section{Drinfeld's presentation of quantum loop algebras}\label{sec Drinfeld presentation of quantum loop}
Throughout this section $v$ is an indeterminate. This should not cause confusion with the previous notation $v=\sqrt{q}$ as later we will consider quantum groups (resp. quantum loop algebras) with $v$ specialized to $\sqrt{q}$.

\subsection{Kac-Moody algebras}\label{subsec Kac-Moody alg}
Let $\mci$ be a finite set, $C=(c_{ij})_{i,j\in\mci}$ be a
generalized Cartan matrix. In this paper we only consider the case that $C$ is symmetric. let $\mfk{g}=\mfk{g}(C)$ be the
\emph{Kac-Moody algebra} associated to $C$, which is the complex Lie
algebra generated by $\{e_i,f_i,h_i:i\in\mci\}$ with relations
\begin{align*}
&[h_i,h_j]=0, \forall\ i,j\in\mci;\\
&[h_i, e_j]=c_{ij}e_j,\
[h_i,f_j]=-c_{ij}f_j, \forall \ i,j\in\mci;\\
&[e_i,f_j]=\delta_{ij}h_i, \forall\ i,j\in\mci;\\
&(\ad e_i)^{1-c_{ij}}e_{j}=0,\ (\ad f_i)^{1-c_{ij}}f_j=0,
\forall\ i\in\mci\ \text{and}\ j\in\mci\ \text{with}\ i\ne j.
\end{align*}

The root system of $\mfkg$ is denoted by $\Delta$. The simple roots
are denoted by $\alpha_{i}$, $i\in\mci$.
Let $Q=\oplus_{i\in\mci}\mathbb{Z}\alpha_{i}$ be the root lattice, which is equipped with the Cartan bilinear form
defined by $(\alpha_i, \alpha_j)=a_{ij}$.

\subsection{Quantum groups}\label{subsec quantum group}
First we recall the following standard notations:
$$[m]=\frac{v^{m}-v^{-m}}{v-v^{-1}},\quad \forall\ m\in\mathbb{Z},$$
$$[n]!=[n][n-1]\cdots[1],\quad \forall\ n\in\mathbb{N},$$
\begin{equation*}
\left[ \begin{array}{c} n\\t\\ \end{array}\right]=\frac{[n]!}{[t]![n-t]!},\quad \forall\ n,t\in\mathbb{N}, t\leq n.
\end{equation*}

The Drinfeld-Jimbo \emph{quantum group} (or the {\em quantized
enveloping algebra}) $U_v(\mfk{g})$ of a Kac-Moody algebra
$\mfk{g}$ is the $\mbc(v)$-algebra generated by
$\{E_i,F_i:i\in\mci\}$ and $\{K_{\mu}:\mu\in\mbz\mci\}$ with the
following defining relations:
\begin{align*}
&K_0=1,\ \ K_{\mu}K_{\nu}=K_{\mu+\nu},\ \ \forall\
\mu,\nu\in\mbz\mci;\\
&K_{\mu}E_i=v^{(\mu,i)}E_i K_{\mu},\ \ K_{\mu}F_i=v^{-(\mu,i)}F_i
K_{\mu},\ \forall\ i\in\mci,\ \mu\in\mbz\mci;\\
&E_iF_j-F_jE_i=\delta_{ij}\frac{K_i-K_{-i}}{v-v^{-1}},\ \forall\
i,j\in\mci;\\
&\sum_{p=0}^{1-c_{ij}}(-1)^p \left[ \begin{array}{c} 1-c_{ij} \\
p\\ \end{array}\right] E_i^p E_j E_i^{1-c_{ij}-p}=0,\ \forall\ \
i\neq j\in\mci;\\
&\sum_{p=0}^{1-c_{ij}}(-1)^p \left[ \begin{array}{c} 1-c_{ij} \\
p\\ \end{array}\right] F_i^p F_j F_i^{1-c_{ij}-p}=0,\ \forall\ \
i\neq j\in\mci.
\end{align*}

The quantized enveloping algebra has a natural triangular
decomposition:
$$U_v(\mfk{g})=U_v^-(\mfk{g})\otimes
U_v^0(\mfk{g})\otimes U_v^+(\mfk{g}),$$ where $U_{v}^{+}(\mfkg)$
(resp. $U_{v}^{-}(\mfkg)$, $U_{v}^{0}(\mfkg)$) is the subalgebra
generated by $E_i$ (resp. $F_i$, $K_{\mu}$).

\subsection{The Loop algebra of $\mfk{g}$}\label{subsec loop alg}
The \emph{loop algebra} of a Kac-Moody algebra $\mfkg$, denoted by $\mathcal{L}\mfkg$,
is defined to be the Lie algebra generated by $\{h_{i,k},
e_{i,k}, f_{i,k},c:i\in\mci, k\in\mathbb{Z}\}$ subject to the
following relations:
\begin{align*}
&c \text{ is central in } \mathcal{L}\mfkg,\\
&[h_{i,k},h_{j,l}]=k\delta_{k,-l}c_{ij}c,\\
&[e_{i,k},f_{j,l}]=\delta_{i,j}h_{i,k+l}+k\delta_{k,-l}c,\\
&[h_{i,k},e_{j,l}]=c_{ij}e_{j,l+k},\ [h_{i,k},f_{j,l}]=-c_{ij}f_{j,l+k},\\
&[e_{i,k+1},e_{j,l}]=[e_{i,k},e_{j,l+1}],\ [f_{i,k+1},f_{j,l}]=[f_{i,k},f_{j,l+1}],\\
&[e_{i,k_1},[e_{i,k_2},[\ldots,[e_{i,k_n},e_{j,l}]\ldots]=0,\
\text{for}\ n=1-c_{ij},\\
&[f_{i,k_1},[f_{i,k_2},[\ldots,[f_{i,k_n},f_{j,l}]\ldots]=0,\
\text{for}\ n=1-c_{ij}.
\end{align*}

It is clear that there is an embedding of Lie algebras
$\mfkg\hookrightarrow\mathcal{L}\mfkg$ assigning $e_{i}$, $f_{i}$,
$h_{i}$ to $e_{i,0}$, $f_{i,0}$, $h_{i,0}$ respectively.

Set $\widehat{Q}=Q\oplus\mathbb{Z}\delta$. We can extend the Cartan
form to $\widehat{Q}$ by setting $(\delta,\alpha)=0$ for all
$\alpha\in\widehat{Q}$. $\mathcal{L}\mfk{g}$ is $\widehat{Q}$-graded
by setting $\deg(e_{i,k})=\alpha_i+k\delta$,
$\deg(f_{i,k})=-\alpha_i+k\delta$ and $\deg(h_{i,k})=k\delta$. The root
system of $\mathcal{L}\mfk{g}$ is $\widehat{\Delta}=\mathbb{Z}^*\delta\cup
\{\Delta+\mathbb{Z}\delta\}$ (See \cite{MRY}).

For each root $\alpha\in\widehat{Q}$, we call it \emph{real} if
$(\alpha, \alpha)=2$, and \emph{imaginary} if $(\alpha, \alpha)\leq
 0$.

\begin{remark}
(1) If $\mfkg$ is a complex simple Lie algebra, $\mathcal{L}\mfkg$
is isomorphic to $\widehat{\mfkg}=\mfkg[t,t^{-1}]\oplus\mathbb{C}c$,
which is an affine Kac-Moody algebra. The assignment $e_{i,k}\mapsto e_{i}t^{k}$, $f_{i,k}\mapsto f_{i}t^{k}$,
$h_{i,k}\mapsto h_{i}t^{k}$ and $c\mapsto c$ gives the isomorphism (See \cite{Ga}).

(2) In general, $\mathcal{L}\mfkg$ and $\widehat{\mfkg}$ are not
Kac-Moody algeras. And the above assignment only yields a surjective
homomorphism $\mathcal{L}\mfkg\rightarrow\widehat{\mfkg}$, whose
kernel was described in \cite{MRY}.
\end{remark}

\subsection{Quantum loop algebras}\label{subsec quantum loop alg}
The \emph{quantum loop algebra} (with zero central charge)
$U_v(\mathcal{L}\mfk{g})$ is the $\mathbb{C}(v)$-algebra generated
by $x^\pm_{i,k}$, $h_{i,l}$ and $K^{\pm 1}_i$ for $i\in\mci,
k\in\mathbb{Z}$, and $l\in\mathbb{Z}^*$ subject to the following
relations:
\begin{gather}
[K_i,K_j]=[K_i,h_{j,l}]=0,\\
[h_{i,l},h_{j,k}]=0,\\
K_ix^\pm_{j,k}K^{-1}_i=v^{\pm a_{ij}}x^\pm_{j,k},\\
[h_{i,l},x^\pm_{j,k}]=\pm\frac{1}{l}[la_{ij}]x^\pm_{j,k+l},\\
x^\pm_{i,k+1}x^\pm_{j,l}-v^{\pm a_{ij}}x^\pm_{j,l}x^\pm_{i,k+1}=v^{\pm a_{ij}}x^\pm_{i,k}x^\pm_{j,l+1}-x^\pm_{j,l+1}x^\pm_{i,k},
\end{gather}
\begin{equation}
\Sym_{k_1,\ldots,k_n}\sum^n_{t=0}(-1)^t\left[ \begin{array}{c} n \\
t\\ \end{array}\right]x^\pm_{i,k_1}\cdots
x^\pm_{i,k_t}x^\pm_{j,l}x^\pm_{i,k_{t+1}}\cdots x^\pm_{i,k_n}=0,
\end{equation}
where $i\neq j$, $n=1-a_{ij}$ and $\Sym_{k_1,\ldots,k_n}$ denotes
symmetrization with respect to the indices $k_1,\ldots,k_n$. And
\begin{equation}
[x^+_{i,k},x^-_{j,l}]=\delta_{ij}\frac{\psi_{i,k+l}-\varphi_{i,k+l}}{v-v^{-1}},
\end{equation}
where $\psi_{i,k}$ and $\varphi_{i,k}$ are defined by the following equations:
\begin{gather*}
\sum_{k\geq
0}\psi_{i,k}u^k=K_i\exp((v-v^{-1})\sum^\infty_{k=1}h_{i,k}u^k),\\
\sum_{k\geq
0}\varphi_{i,-k}u^{-k}=K^{-1}_i\exp(-(v-v^{-1})\sum^\infty_{k=1}h_{i,-k}u^{-k}).
\end{gather*}

\begin{remark}
If $\mathfrak{g}$ is a simple Lie algebra, the above definition of $U_{v}(\mathcal{L}\mfk{g})$ is the so-called Drinfeld's new realization of quantum affine algebras. Namely in this case, $U_v(\mathcal{L}\mfk{g})$ is isomorphic to the
Drinfeld-Jimbo quantized enveloping algebra $U_{v}(\widehat{\mfkg})$ (See \cite{Dr}, \cite{Be} for details).
\end{remark}

\section{The category of coherent sheaves on weighted projective lines}\label{sec coh sheaf on w p l}
Now we introduce the category of coherent sheaves on weighted projective lines as studied
in~\cite{GL}. In this section $k$ denotes an arbitrary field.

\subsection{Weighted projective lines}\label{subsec weighted projective line}
Let $\mathbf{p}=(p_{1},p_{2},\ldots,p_{n})\in\mathbb{N}^{n}$.
Consider the $\mathbb{Z}$-module
$$L(\mathbf{p})=\mathbb{Z}\vec{x}_{1}\oplus\mathbb{Z}\vec{x}_{2}\oplus\cdots\oplus\mathbb{Z}\vec{x}_{n}/J$$
where $J$ is the submodule generated by
$\{p_{1}\vec{x}_{1}-p_{s}\vec{x}_{s}|s=2,\ldots,n\}$. Set
$\vec{c}=p_{1}\vec{x}_{1}=\cdots=p_{n}\vec{x}_{n}\in L(\mathbf{p})$.

The polynomial ring $k[X_{1},\ldots,X_{n}]$ has a structure of
$L(\mathbf{p})$-graded algebra by setting $\deg X_{i}=\vec{x}_{i}$.
We will denote it by $S(\mathbf{p})$.

Let $\underline{\lambda}=\{\lambda_{1},\ldots,\lambda_{n}\}$ be a
collection of distinct closed points (of degree $1$) on the
projective line $\mathbb{P}^{1}(k)$. Let
$I(\mathbf{p},\underline{\lambda})$ be the $L(\mathbf{p})$-graded
ideal of $S(\mathbf{p})$ generated by
$\{X^{p_{s}}-(X^{p_{2}}-\lambda_{s}X^{p_{1}})|s=3,\ldots,n\}$. Set
$S(\mathbf{p},\underline{\lambda})=S(\mathbf{p})/I(\mathbf{p},\underline{\lambda})$,
which is also an $L(\mathbf{p})$-graded algebra. For any $i$, we denote by $x_{i}$ the image of
$X_{i}$ in $S(\mathbf{p},\underline{\lambda})$.

Let $\mathbb{X}_{\mathbf{p},\underline{\lambda}}=\text{Specgr}S(\mathbf{p},\underline{\lambda})$
be the set of all prime homogeneous ideals in
$S(\mathbf{p},\underline{\lambda})$. This is the so-called
\emph{weighted projective line}. The pair $(\mathbf{p},\underline{\lambda})$
is called the \emph{weight type} of
$\mathbb{X}_{\mathbf{p},\underline{\lambda}}$. The number $p_{i}$ is
the \emph{weight} of the point $\lambda_{i}$.

In the following we will fix a weight type $(\mathbf{p},\underline{\lambda})$ and write $S=S(\mathbf{p},\underline{\lambda})$,
$\mathbb{X}=\mathbb{X}_{\mathbf{p},\underline{\lambda}}$ for short.

\subsection{Coherent sheaves on $\mathbb{X}_{\mathbf{p},\underline{\lambda}}$}\label{subsec coherent sheaf}
For any homogeneous element $f\in S$, let $V_{f}=\{\mathfrak{p}\in\mathbb{X}|f\in\mathfrak{p}\}$ and
$D_{f}=\mathbb{X}\setminus V_{f}$.

The \emph{structure sheaf} $\mo_{\mathbb{X}}$ is defined to be the
sheaf of $L(\mathbf{p})$-graded algebras on $\mathbb{X}$ associated to
the presheaf $D_{f}\mapsto S_{f}$, where $S_{f}=\{g/f^{l}|g\in
S,l\in\mathbb{N}\}$. We denote by $\mo_{\mathbb{X}}$-Mod the
category of sheaves of $L(\mathbf{p})$-graded
$\mo_{\mathbb{X}}$-modules on $\mathbb{X}$.

For any $\vec{x}\in L(\mathbf{p})$ and any $L(\mathbf{p})$-graded
$\mo_{\mathbb{X}}$-module $\mathscr{M}$, we denote by
$\mathscr{M}(\vec{x})$ the \emph{shift} of $\mathscr{M}$ by
$\vec{x}$ (i.e. $\mathscr{M}(\vec{x})[\vec{y
}]=\mathscr{M}[\vec{x}+\vec{y}]$). A sheaf $\mathscr{M}$ of
$L(\mathbf{p})$-graded $\mo_{\mathbb{X}}$-module is called
\emph{coherent} if there exists an open covering $\{U_{i}\}$ of
$\mathbb{X}$ and for each $i$ an exact sequence
$$\bigoplus_{s=1}^{N}\mo_{\mathbb{X}}(\vec{l_{s}})|_{U_{i}}\rightarrow\bigoplus_{t=1}^{M}\mo_{\mathbb{X}}(\vec{k_{t}})|_{U_{i}}\rightarrow\mathscr{M}|_{U_{i}}\rightarrow 0.$$

The category of coherent sheaves on $\mathbb{X}$, denoted by
$\Coh(\mathbb{X})$, is a full subcategory of $\mo_{\mathbb{X}}$-Mod. It has been proved in \cite{GL} that $\Coh(\mathbb{X})$ is a $k$-linear
hereditary, Hom- and Ext-finite abelian category.

\subsection{The structure of the category $\Coh(\mathbb{X})$}\label{subsec structure of Coh}
Let $\mathscr{F}$ be the full subcategory of $\Coh(\mathbb{X})$
consisting of all locally free sheaves, and $\mathscr{T}$ be the
full subcategory consisting of all torsion sheaves. Both $\mathscr{F}$ and
$\mathscr{T}$ are extension-closed. Moreover, $\mathscr{T}$ itself is a hereditary length abelian category. The following lemma was
proved in \cite{GL}:

\begin{Lem}\label{lem free and torsion}
(1). For any sheaf $\mathscr{M}\in\Coh(\mathbb{X})$, it can be
decomposed as $\mathscr{M}_{t}\oplus\mathscr{M}_{f}$ where
$\mathscr{M}_{t}\in\mathscr{T}$ and $\mathscr{M}_{f}\in\mathscr{F}$.

(2).
$\rm{Hom}(\mathscr{M}_{t},\mathscr{M}_{f})=\rm{Ext^{1}}(\mathscr{M}_{f},\mathscr{M}_{t})=0$,
for any $\mathscr{M}_{t}\in\mathscr{T}$ and
$\mathscr{M}_{f}\in\mathscr{F}$.
\end{Lem}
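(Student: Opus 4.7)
The plan is to establish (2) first and then derive (1) from it. The key underlying observation for (2) is that every locally free sheaf on $\mathbb{X}$ is torsion-free: locally on $\mathbb{X}$, a sheaf in $\mathscr{F}$ is a direct summand of a finite sum $\bigoplus \mo_{\mathbb{X}}(\vec{l_s})$, whose sections over each basic open $D_f$ are finitely generated torsion-free graded modules over $S_f$, because the stalks of $S(\mathbf{p},\underline{\lambda})$ at the height-one homogeneous primes are graded discrete valuation rings. Consequently any morphism $\mathscr{M}_t \to \mathscr{M}_f$ with $\mathscr{M}_t \in \mathscr{T}$ has torsion image inside a torsion-free sheaf and must vanish, which gives the Hom part of (2).

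For the $\Ext^{1}$ vanishing I would invoke Serre duality on the weighted projective line (established in \cite{GL}), which yields a functorial isomorphism $\Ext^{1}(A,B) \cong D\,\Hom(B,\tau A)$, where $D$ denotes the $k$-linear dual and the Auslander--Reiten translation $\tau$ is given by a shift by the dualizing element $\vec{\omega} = (n-2)\vec{c} - \sum_{i}\vec{x}_{i} \in L(\mathbf{p})$. Since $\tau$ is an auto-equivalence implemented by a grading shift, it preserves $\mathscr{F}$; hence $\tau \mathscr{M}_f \in \mathscr{F}$, and the Hom term on the right vanishes by the first step, giving $\Ext^{1}(\mathscr{M}_f, \mathscr{M}_t)=0$.

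To prove (1), I would take $\mathscr{M}_t \subset \mathscr{M}$ to be the maximal torsion subsheaf, which exists because $\Coh(\mathbb{X})$ is Noetherian and $\mathscr{T}$ is closed under subobjects, quotients, sums, and extensions. The quotient $\mathscr{M}_f := \mathscr{M}/\mathscr{M}_t$ is torsion-free by maximality, and hence locally free: on $\mathbb{X}$, torsion-free coherent sheaves coincide with locally free ones, which again reduces to the fact that finitely generated torsion-free modules over a graded DVR are free. The short exact sequence
$$0 \to \mathscr{M}_t \to \mathscr{M} \to \mathscr{M}_f \to 0$$
then splits by the $\Ext^{1}$ vanishing of part (2), giving $\mathscr{M} \cong \mathscr{M}_t \oplus \mathscr{M}_f$. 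The only nontrivial technical ingredient in the whole argument is the identification of torsion-free with locally free, which rests on understanding the local structure of $S(\mathbf{p},\underline{\lambda})$ at its closed points; once this (and Serre duality) are in hand, the rest is formal.
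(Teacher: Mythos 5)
Your proof is correct; the paper offers no argument of its own for this lemma (it simply cites \cite{GL}), and your route --- maximal torsion subsheaf, torsion-free equals locally free via the local graded-DVR structure of $S(\mathbf{p},\underline{\lambda})$, vanishing of $\Hom(\mathscr{M}_{t},\mathscr{M}_{f})$, and splitting of the extension by Serre duality with $\tau$ the shift by $\vec{\omega}=(n-2)\vec{c}-\sum_{i}\vec{x}_{i}$ --- is precisely the argument carried out in Geigle--Lenzing. The two ingredients you flag as the only nontrivial inputs (the local identification of torsion-free with locally free, and Serre duality) are indeed established in \cite{GL} before this decomposition is stated there, so nothing is circular.
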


To describe $\mathscr{T}$ more precisely, we need a classification
of the closed points in $\mathbb{X}$. Recall that
$\mathbb{X}=\text{Specgr}S(\mathbf{p},\underline{\lambda})$.
According to \cite{GL}, each $\lambda_{i}$ corresponds to the prime
ideal generated by $x_{i}$, called an \emph{exceptional point}. And
other homogeneous primes are given by a prime homogeneous polynomial
$F(x_{1}^{p_{1}},x_{2}^{p_{2}})\in k[T_{1},T_{2}]$, which are called
\emph{ordinary points}.

Let $p=\text{l.c.m}(p_{1},\ldots,p_{n})$. The \emph{degree} of a
closed point is defined by setting $\deg(\lambda_{i})=p/p_{i}$ for any $i$
and $\deg(x)=pd$ for any ordinary point $x$ corresponding to a prime
homogeneous polynomial of degree $d$.

Let $C_{r}$ be the cyclic quiver with $r$ vertices. More precisely,
for $r=1$, $C_{1}$ is just the quiver with only one vertex and one
loop arrow. For $r\geq 2$, the vertices of $C_{r}$ are indexed by
$\mathbb{Z}/r\mathbb{Z}$ and the arrows are from $i$ to $i-1$ for
each $i$. Denote by $\rep_{0}(C_{r})_{k}$ the category of finite-dimensional nilpotent
representations of $C_{r}$ over the field $k$. The following lemma,
due to \cite{GL}, describes the structure of the subcategory
$\mathscr{T}$.

\begin{Lem}\label{lem structure of torsion}
(1). The category $\mathscr{T}$ decomposes as a coproduct
$\mathscr{T}=\coprod_{x\in\mathbb{X}}\mathscr{T}_{x}$, where
$\mathscr{T}_{x}$ is the subcategory of torsion sheaves with support
at $x$.

(2). For any ordinary point $x$ of degree $d$, let $k_{x}$
denote the residue field at $x$. Then $\mathscr{T}_{x}$ is
equivalent to the category $\rep_{0}(C_{1})_{k_{x}}$.

(3). For any exceptional point $\lambda_{i}$ ($1\leq i\leq
n$), the category $\mathscr{T}_{\lambda_{i}}$ is equivalent to $\rep_{0}(C_{p_{i}})_{k}$.
\end{Lem}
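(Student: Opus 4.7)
The plan is to establish the three statements in order, essentially following the local structure of $\mathbb{X}$ developed in \cite{GL}. For (1), I would first verify that any object $\mathscr{M}\in\mathscr{T}$ has finite support: being coherent and torsion on the one-dimensional space $\mathbb{X}$, its support is a closed subscheme of dimension zero, hence a finite set of closed points. Next I would show that for distinct closed points $x\neq y$ and torsion sheaves $\mathscr{M}_x$, $\mathscr{M}_y$ supported at $x$, $y$ respectively, one has $\Hom(\mathscr{M}_x,\mathscr{M}_y)=\Ext^{1}(\mathscr{M}_x,\mathscr{M}_y)=0$. This is a purely local check on stalks, since a morphism or extension must be nonzero on some stalk, but the stalks of $\mathscr{M}_x$ vanish away from $x$ and those of $\mathscr{M}_y$ vanish away from $y$. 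Given these vanishings, the direct sum decomposition $\mathscr{M}=\bigoplus_{x}\mathscr{M}_x$ follows by induction on the length of $\mathscr{M}$ using that $\mathscr{T}$ is a length category.

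For (2), at an ordinary point $x$ of degree $d$, I would pass to the $L(\mathbf{p})$-homogeneous localization $S_{(x)}$. The corresponding prime is generated by an irreducible homogeneous polynomial $F(x_1^{p_1},x_2^{p_2})$ of degree $d\vec{c}$. All $x_j$ become units in the localization, so $S_{(x)}$ is (after taking a suitable degree-$\vec{0}$ component) a discrete valuation ring with uniformizer the image of $F$ and residue field $k_x$. The category $\mathscr{T}_x$ is then equivalent to the category of finite-length modules over this DVR, and the standard classification of such modules as iterated extensions of the residue field identifies this category with $\rep_{0}(C_{1})_{k_{x}}$ (a single $k_x$-vector space with a nilpotent endomorphism induced by multiplication by the uniformizer).

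For (3), at an exceptional point $\lambda_i$, the analogous procedure localizes at the prime $\langle x_i\rangle$. Here the $L(\mathbf{p})$-grading must be tracked with care: after inverting all $x_j$ with $j\neq i$, the effective grading group collapses from $L(\mathbf{p})$ to the quotient by the subgroup generated by the $\vec{x}_j$, $j\neq i$. Using that $p_i\vec{x}_i=\vec{c}=p_j\vec{x}_j$, this quotient is isomorphic to $\mathbb{Z}/p_i\mathbb{Z}$, generated by the class of $\vec{x}_i$. The localization itself is a $\mathbb{Z}/p_i\mathbb{Z}$-graded DVR with uniformizer $x_i$ of degree $1\in\mathbb{Z}/p_i\mathbb{Z}$ and residue field $k$. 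A finite-length $L(\mathbf{p})$-graded module is therefore a tuple $(V_0,\ldots,V_{p_i-1})$ of $k$-vector spaces together with maps $V_j\to V_{j+1}$ induced by multiplication by $x_i$, whose total $p_i$-fold cyclic composition is nilpotent; this is exactly the data of an object of $\rep_{0}(C_{p_i})_{k}$.

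The main obstacle will be the bookkeeping in part (3): carefully verifying that the homogeneous localization at an exceptional prime is indeed a $\mathbb{Z}/p_i\mathbb{Z}$-graded DVR, and matching the grading shift by $\vec{x}_i$ to the direction of the arrows of $C_{p_i}$. Parts (1) and (2) are straightforward once the support finiteness and the local structure at ordinary points are in place; the genuine content of the lemma lies in the exceptional case, where the weight $p_i$ of the point manifests itself as the length of the cycle in the quiver $C_{p_i}$ via the collapse of the $L(\mathbf{p})$-grading in the localization.
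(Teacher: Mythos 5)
The paper gives no proof of this lemma; it is quoted directly from Geigle--Lenzing \cite{GL}, and your sketch is essentially a faithful reconstruction of the argument there: support decomposition via vanishing of $\Hom$ and $\Ext^1$ between sheaves concentrated at distinct points, and identification of each $\mathscr{T}_x$ by homogeneous localization, with the $L(\mathbf{p})$-grading collapsing to $\mathbb{Z}/p_i\mathbb{Z}$ at an exceptional point. The one step you gloss over is in part (2): a finite-length module over the local ring at an ordinary point of degree $d>1$ is not a priori a $k_x$-vector space, and one needs that $k_x/k$ is separable (automatic over $k=\mathbb{F}_q$) so that Hensel's lemma provides a coefficient field and identifies $S_{(x)}/\mathfrak{m}^n$ with $k_x[t]/(t^n)$. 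With that point supplied, the proposal is correct and matches the cited source.
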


\subsection{Indecomposable objects in $\mathscr{T}$}\label{subsec indecomposable torsion sheaf}
We first give a description of the simple objects in $\mathscr{T}$.

For any ordinary point $x$ of degree $d$, let $\pi_{x}$ denote the prime
homogeneous polynomial corresponding to $x$. Then multiplication by
$\pi_{x}$ gives the exact sequence
$$0\rightarrow\mo_{\mathbb{X}}\rightarrow\mo_{\mathbb{X}}(d\vec{c})\rightarrow S_{x}\rightarrow 0.$$
$S_{x}$ is the unique (up to isomorphism) simple sheaf in the
category $\mathscr{T}_{x}$. Moreover, for any $\vec{k}\in
L(\mathbf{p})$ we have $S_{x}(\vec{k})=S_{x}$.

For any exceptional point $\lambda_{i}$, multiplication by $x_{i}$
yields the exact sequence
$$0\rightarrow\mo_{\mathbb{X}}((j-1)\vec{x_{i}})\rightarrow\mo_{\mathbb{X}}(j\vec{x_{i}})\rightarrow S^{i}_{j}\rightarrow 0,\ \text{for each }j,\ 1\leq j\leq p_{i}$$
And $\{S^{i}_{j}|1\leq j\leq p_{i}\}$ is a complete set of pairwise
non-isomorphic simple sheaves in the category
$\mathscr{T}_{\lambda_{i}}$, for any $i$ ($1\leq i\leq n$).
Moreover, for any $\vec{k}=\sum k_{i}\vec{x_{i}}$ we have
$S^{i}_{j}(\vec{k})=S^{i}_{j+k_{i}(\modl p_{i})}$.

Now we describe the indecomposable objects. Recall the following
well-known results on representation theory of cyclic quivers:

(1). In the category $\rep_{0}(C_{1})$, the set of isomorphism
classes of indecomposables is $\{S(a)|a\in\mathbb{N}\}$, where $S=S(1)$ is
the only simple representation, and $S(a)$ is the unique
indecomposable representation of length $a$.

For any partition
$\mu=(\mu_{1}\geq\cdots\geq\mu_{t})$, let
$S(\mu)=\bigoplus_{i}S(\mu_{i})$. Then any object in $\rep_{0}(C_{1})$ is isomorphic to $S(\mu)$ for some $\mu$.

(2). In the category $\rep_{0}(C_{r})$, we have $r$ simple
representations $S_{j}=S_{j}(1)$ ($1\leq j\leq r$). Denote by $S_{j}(a)$
the unique indecomposable representation with top $S_{j}$ and
length $a$. Then $\{S_{j}(a)|1\leq j\leq r,a\in\mathbb{N}\}$ is the set of all isomorphism
classes of indecomposables in $\rep_{0}(C_{r})$.

The above results, combined with lemma \ref{lem structure of
torsion}, give a classification of indecomposable objects in $\mathscr{T}$.
We denote by $S_{x}(a)$ the unique
indecomposable object of length $a$ in $\mathscr{T}_{x}$, for any
ordinary point $x$. And for any exceptional point $\lambda_{i}$, the
indecomposable objects in $\mathscr{T}_{\lambda_{i}}$ are denoted by
$S^{i}_{j}(a)$ ($1\leq j\leq p_{i},a\in\mathbb{N}$).

\subsection{The Grothendiek group and the Euler form}\label{subsec Euler form}
The following proposition (see \cite{GL}) gives an explicit
description of the Grothendieck group of $\Coh(\mathbb{X})$.

\begin{Prop}\label{prop Grothendieck group}
$$K_{0}(\Coh(\mathbb{X}))\cong(\mathbb{Z}[\mo_{\mathbb{X}}]\oplus\mathbb{Z}[\mo_{\mathbb{X}}(\vec{c})]\oplus
\bigoplus_{i,j}\mathbb{Z}[S^{i}_{j}])/I$$ where $I$ is the subgroup
generated by
$\{\sum_{j=1}^{p_{i}}[S^{i}_{j}]+[\mo_{\mathbb{X}}]-[\mo_{\mathbb{X}}(\vec{c})]|i=1,\ldots,n\}$.
\end{Prop}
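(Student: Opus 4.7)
The plan is to introduce the evident map
$$\phi\colon F:=\mathbb{Z}[\mo_{\mathbb{X}}]\oplus\mathbb{Z}[\mo_{\mathbb{X}}(\vec{c})]\oplus\bigoplus_{i,j}\mathbb{Z}[S^{i}_{j}]\longrightarrow K_{0}(\Coh(\mathbb{X}))$$
sending each generator to the corresponding class, and then to check in turn that $\phi$ descends modulo $I$, that the induced map $\bar\phi\colon F/I\to K_{0}(\Coh(\mathbb{X}))$ is surjective, and finally that it is injective. Well-definedness is immediate: telescoping the defining short exact sequences $0\to\mo((j-1)\vec{x_i})\to\mo(j\vec{x_i})\to S^i_j\to 0$ over $j=1,\ldots,p_i$ and using $p_i\vec{x_i}=\vec{c}$ in $L(\mathbf{p})$ yields exactly $[\mo(\vec{c})]-[\mo]=\sum_j[S^i_j]$ in $K_0$, which kills each generator of $I$.

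For surjectivity, any $\mathscr{M}\in\Coh(\mathbb{X})$ splits as $\mathscr{M}_t\oplus\mathscr{M}_f$ by Lemma \ref{lem free and torsion}, so the two summands can be handled separately. The torsion part decomposes over closed points: at an exceptional point $\lambda_i$, the subcategory $\mathscr{T}_{\lambda_i}$ is a length category with simples $\{S^i_j\}$, so its class is already in the image of $\bar\phi$; at an ordinary point $x$ of degree $d$, the defining sequence $0\to\mo\to\mo(d\vec{c})\to S_x\to 0$ combined with the iterated identity $[\mo(d\vec{c})]-[\mo]=d([\mo(\vec{c})]-[\mo])$ (obtained by telescoping the previous paragraph's relation $d$ times, after twisting by $\vec{c}$ and using that $S^i_j(\vec{c})\cong S^i_j$) gives $[S_x]=d([\mo(\vec{c})]-[\mo])$. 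For the locally free part I would invoke the filtration by line bundles provided in \cite{GL} and reduce to expressing each line bundle class $[\mo(\vec{y})]$ in the generators: writing $\vec{y}=\sum_i n_i\vec{x_i}+n\vec{c}$ in normal form with $0\leq n_i<p_i$, an induction on $\sum n_i+|n|$ using the twisted short exact sequences $0\to\mo(\vec{z})\to\mo(\vec{z}+\vec{x_i})\to S^i_{j(\vec{z})}\to 0$ together with the previous telescoping finishes the job.

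For injectivity, the plan is to exhibit enough additive functionals on $K_0(\Coh(\mathbb{X}))$ to separate the images of the generators. The generic rank $\mathrm{rk}\colon K_0\to\mathbb{Z}$ sends both line bundle classes to $1$ and every $[S^i_j]$ to $0$; to distinguish the simples among themselves I would exploit that each $\mathscr{T}_{\lambda_i}$ is a Serre subcategory of $\Coh(\mathbb{X})$, so that the multiplicities $m^i_j$, additive on $K_0(\mathscr{T}_{\lambda_i})\cong\mathbb{Z}^{p_i}$, extend to additive functionals on the full $K_0(\Coh(\mathbb{X}))$ after fixing a splitting of the associated localization sequence. Evaluating the resulting list of functionals on $F$ produces a matrix of rank $2+\sum_i p_i-n$, which equals the rank of $F/I$; together with the already-established surjectivity this forces $\ker\phi=I$. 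The main obstacle is establishing that the $m^i_j$ are additive on the whole Grothendieck group, since the ``torsion at $\lambda_i$'' functor is not exact in general; this ultimately rests on the Serre-subcategory description of $\mathscr{T}_{\lambda_i}\subset\Coh(\mathbb{X})$ via the localization $\Coh(\mathbb{X})\to\Coh(\mathbb{X})/\mathscr{T}_{\lambda_i}$ developed in \cite{GL}, and is precisely what dictates the form of the relations in $I$.
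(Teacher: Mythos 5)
Your well-definedness and surjectivity arguments are sound and are essentially the route one would take (the paper itself offers no proof, citing \cite{GL} for the statement). The genuine problem is in the injectivity step: the multiplicity functionals $m^i_j$ do \emph{not} extend to additive functionals on $K_0(\Coh(\mathbb{X}))$, and no choice of splitting of a localization sequence can fix this. Concretely, the relation you verified in the well-definedness step gives $\sum_{j}[S^i_j]=[\mo_{\mathbb{X}}(\vec{c})]-[\mo_{\mathbb{X}}]=\delta$ in $K_0$ for \emph{every} $i$, and for an ordinary point $x$ of degree $1$ one likewise has $[S_x]=\delta$. Hence $S_x$ and $\bigoplus_j S^i_j$ have the same class in $K_0(\Coh(\mathbb{X}))$, while any reasonable $m^i_1$ takes the values $0$ and $1$ on them; so $m^i_1$ cannot factor through $K_0$. (Your own rank count betrays the same tension: if all $\sum_i p_i$ multiplicities plus the rank were additive, the matrix you evaluate on $F$ would have rank at least $1+\sum_i p_i$, exceeding the target $2+\sum_i p_i-n$ as soon as $n\geq 2$.) Only differences of multiplicities within a tube, together with one copy of $\delta$ shared among all tubes, survive in $K_0$ --- which is exactly the content of the relations generating $I$, so an injectivity proof that presupposes the individual $m^i_j$ is implicitly assuming what it must prove.

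The strategy is salvageable by replacing the $m^i_j$ with functionals that are genuinely additive on all of $K_0(\Coh(\mathbb{X}))$, namely the Euler-form functionals $\langle[\mo_{\mathbb{X}}(\vec{x})],-\rangle=\dim\Hom(\mo_{\mathbb{X}}(\vec{x}),-)-\dim\Ext^1(\mo_{\mathbb{X}}(\vec{x}),-)$ for the $2+\sum_i(p_i-1)$ twists $0\leq\vec{x}\leq\vec{c}$; these are well defined because $\Coh(\mathbb{X})$ is hereditary and Hom- and Ext-finite. Using Lemma \ref{lem comute Euler form} one checks that the matrix of these functionals against the images of $[\mo_{\mathbb{X}}]$, $[\mo_{\mathbb{X}}(\vec{c})]$ and $[S^i_j]$, $1\leq j\leq p_i-1$, is unimodular (it is the Euler matrix of the canonical algebra, triangular with $1$'s on the diagonal in a suitable ordering), which forces $\ker\phi=I$ given your surjectivity. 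This is in substance the tilting-bundle argument by which \cite{GL} actually computes $K_0(\Coh(\mathbb{X}))$.
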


In the following we simply write $\mo$ for $\mo_{\mathbb{X}}$. Set
$$\delta=[\mo(\vec{c})]-[\mo]=\sum_{j=1}^{p_{i}}[S^{i}_{j}], \text{ for } i=1,\ldots,n.$$
To calculate the Euler form on $K_{0}(\Coh(\mathbb{X}))$, we have
the following lemma (see \cite{S}):

\begin{Lem}\label{lem comute Euler form}
The Euler form $\langle-,-\rangle$ on $K_{0}(\Coh(\mathbb{X}))$ is given by
\begin{gather*}
\langle[\mo],[\mo]\rangle=1,\ \ \ \langle\mo,\delta\rangle=1,\ \ \
\langle\delta,[\mo]\rangle=-1,\\
\langle\delta,\delta\rangle=0,\ \ \
\langle\delta,[S^{i}_{j}]\rangle=0,\ \ \ \langle
[S^{i}_{j}],\delta\rangle=0,\\
\langle[\mo],[S^{i}_{j}]\rangle=\begin{cases}
                            1& \text{ if } j=p_{i}\\
                            0& \text{ if } j\neq p_{i}
                            \end{cases}\\
\langle[S^{i}_{j}],[\mo]\rangle=\begin{cases}
                             -1& \text{ if } j=1\\
                             0& \text{ if } j\neq 1
                             \end{cases}\\
\langle[S^{i}_{j}],[S^{i'}_{j'}]\rangle=\begin{cases}
                                     1&  \text{ if } i=i',j=j'\\
                                     -1& \text{ if } i=i',j\equiv
                                     j'+1 (\modl p_{i})\\
                                     0& \text{ otherwise}
                                     \end{cases}
\end{gather*}
\end{Lem}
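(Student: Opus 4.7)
The plan is to reduce the full table to four basic Hom/Ext computations, then deduce the rest by bilinearity using the relation $\delta=[\mo(\vec{c})]-[\mo]=\sum_{j=1}^{p_{i}}[S^{i}_{j}]$ in whichever direction is convenient. The four cases I would treat first are $\langle[\mo],[\mo]\rangle$, $\langle[S^{i}_{j}],[S^{i'}_{j'}]\rangle$, $\langle[\mo],[S^{i}_{j}]\rangle$, and $\langle[S^{i}_{j}],[\mo]\rangle$.

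For $\langle[\mo],[\mo]\rangle$, the Hom space is $H^{0}(\mathbb{X},\mo)=k$ and Serre duality gives $\Ext^{1}(\mo,\mo)\cong D\Hom(\mo,\mo(\vec{\omega}))=0$ since the canonical element $\vec{\omega}=-2\vec{c}+\sum_{i}(p_{i}-1)\vec{x}_{i}$ lies outside the positive cone of $L(\mathbf{p})$. For $\langle[S^{i}_{j}],[S^{i'}_{j'}]\rangle$, lemma \ref{lem structure of torsion} provides orthogonality between distinct tubes and reduces the $i=i'$ case to the standard Euler form on $\rep_{0}(C_{p_{i}})_{k}$, for which $\dim\Hom=\delta_{jj'}$ and $\dim\Ext^{1}=1$ precisely when $j'\equiv j+1\pmod{p_{i}}$. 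For the mixed pair $\langle[\mo],[S^{i}_{j}]\rangle$ I would apply $\Hom(\mo,-)$ to the defining sequence
\[
0\to\mo((j-1)\vec{x}_{i})\to\mo(j\vec{x}_{i})\to S^{i}_{j}\to 0
\]
and take alternating sum of dimensions; after killing the two Ext contributions by Serre duality (each shift $\vec{\omega}-k\vec{x}_{i}$ for $k=j-1,j$ retains negative $\vec{c}$-coefficient), the result reduces to $\dim S_{j\vec{x}_{i}}-\dim S_{(j-1)\vec{x}_{i}}$, which the canonical-form formula $\dim S_{a\vec{c}+\sum b_{i}\vec{x}_{i}}=a+1$ evaluates to $\delta_{j,p_{i}}$. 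The companion $\langle[S^{i}_{j}],[\mo]\rangle$ is cleanest via Serre duality: $\Hom(S^{i}_{j},\mo)=0$ since $\mo$ is torsion-free, and $\Ext^{1}(S^{i}_{j},\mo)\cong D\Hom(\mo,S^{i}_{j}(\vec{\omega}))$; since $\vec{\omega}$-shift sends $S^{i}_{j}$ to $S^{i}_{j-1}$ (the $-2\vec{c}$-part acts trivially on the tube while the $(p_{i}-1)\vec{x}_{i}$-part shifts the index by $-1$), this Ext is $k$ exactly when $j=1$, yielding $-\delta_{j,1}$.

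The remaining four entries are then formal. Writing $\delta=\sum_{j'}[S^{i}_{j'}]$, the sum $\langle\delta,[S^{i}_{j}]\rangle$ has exactly two nonzero terms ($+1$ at $j'=j$ and $-1$ at $j'\equiv j+1$), giving $0$; by the same token $\langle[S^{i}_{j}],\delta\rangle=0$ and $\langle\delta,\delta\rangle=0$, while $\langle[\mo],\delta\rangle=\sum_{j}\delta_{j,p_{i}}=1$ and $\langle\delta,[\mo]\rangle=-\sum_{j}\delta_{j,1}=-1$. The only genuine difficulty lies in the mixed-pair step: one must keep careful track of the grading conventions, namely how $\vec{\omega}$ shifts the simple torsion sheaves and precisely when a given element of $L(\mathbf{p})$ has nonnegative $\vec{c}$-coefficient in canonical form, but this is routine within the Geigle--Lenzing framework recalled in section \ref{sec coh sheaf on w p l}.
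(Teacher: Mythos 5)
The paper itself gives no proof of this lemma (it simply cites Schiffmann), so the comparison is with the standard Geigle--Lenzing computation, which is exactly the route you take: Serre duality with the dualizing element $\vec{\omega}=-2\vec{c}+\sum_i(p_i-1)\vec{x}_i$, the defining sequences $0\to\mo((j-1)\vec{x}_i)\to\mo(j\vec{x}_i)\to S^i_j\to0$, the dimension formula for the graded ring, and then bilinearity via $\delta=\sum_j[S^i_j]$. Three of your four base cases, and all the formal deductions, are correct.

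There is, however, one genuine error: in the simple--simple case you assert $\dim\Ext^1(S^i_j,S^i_{j'})=1$ precisely when $j'\equiv j+1\pmod{p_i}$, but the correct condition is $j'\equiv j-1\pmod{p_i}$. Your own observation that the $\vec{\omega}$-shift sends $S^i_j$ to $S^i_{j-1}$ already decides this: Serre duality gives $\Ext^1(S^i_j,S^i_{j'})\cong\Hom(S^i_{j'},S^i_j(\vec{\omega}))^{*}=\Hom(S^i_{j'},S^i_{j-1})^{*}$, which is nonzero iff $j'\equiv j-1$. (Equivalently, pushing out $0\to\mo((j-1)\vec{x}_i)\to\mo(j\vec{x}_i)\to S^i_j\to0$ along $\mo((j-1)\vec{x}_i)\twoheadrightarrow S^i_{j-1}$ exhibits a non-split extension of $S^i_j$ by $S^i_{j-1}$.) As written, your rule yields $\langle[S^i_j],[S^i_{j'}]\rangle=-1$ when $j\equiv j'-1$, i.e.\ the transpose of the stated (non-symmetric) form on each tube, and it is inconsistent with your own correct mixed entries: for $p_i\ge3$, expanding $[S^i_2]=[\mo(2\vec{x}_i)]-[\mo(\vec{x}_i)]$ and $[S^i_1]=[\mo(\vec{x}_i)]-[\mo]$ and using $\langle[\mo(\vec{a})],[\mo(\vec{b})]\rangle=\dim S_{\vec{b}-\vec{a}}-\dim S_{\vec{\omega}+\vec{a}-\vec{b}}$ forces $\langle[S^i_2],[S^i_1]\rangle=-1$, whereas your rule gives $0$. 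The telescoping arguments for the $\delta$-entries happen to be insensitive to this orientation and survive, but the off-diagonal tube entries must be corrected; once you replace $j'\equiv j+1$ by $j'\equiv j-1$, the proof is complete and agrees with the lemma.
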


\section{Main results}\label{sec main result}

\subsection{}\label{subsec Hall alg of Coh}
From now on we fix a finite field $k=\mathbb{F}_{q}$ and set
$v=\sqrt{q}$. And we also fix a weight type (see \ref{subsec
weighted projective line}):
$$\mathbf{p}=(p_{1},\cdots,p_{n}),\ \
\underline{\lambda}=\{\lambda_{1},\cdots,\lambda_{n}\}.$$ Consider
the weighted projective line
$\mathbb{X}=\mathbb{X}_{\mathbf{p},\underline{\lambda}}$ and the
category of coherent sheaves $\Coh(\mathbb{X})$. We keep the notions
in the last section.

Since $\Coh(\mathbb{X})$ is a $k$-linear hereditary, Hom- and
Ext-finite abelian category, we have the Hall algebra
$\mathbf{H}(\Cohx)$ and the double Hall algebra $\dh(\Cohx)$, as in
section \ref{subsec Hall alg} and \ref{subsec double Hall alg}.

The subcategory $\mathscr{T}$ (resp. $\mathscr{T}_{x}$, for any closed point
$x$ in $\mathbb{X}$) is a hereditary length abelian category. Thus we can
define the Hall algebra $\mathbf{H}(\mathscr{T})$ (resp.
$\mathbf{H}(\mathscr{T}_{x})$). It is clear that $\mathbf{H}(\mathscr{T})$ and
$\mathbf{H}(\mathscr{T}_{x})$ are sub-Hopf algebras of
$\mathbf{H}(\Cohx)$, since the categories $\mathscr{T}$ and $\mathscr{T}_{x}$
are stable under taking extensions, subobjects and quotients.

\subsection{}\label{subsec homogeneous tube}
In this and the next subsection we define some element $T_{r}\in\mathbf{H}(\mathscr{T})$, for any $r\in\mathbb{N}$,
following \cite{S}.

For any ordinary point $x$ of degree $d$, we know that
$\mathscr{T}_{x}$ is equivalent to the category of finite-dimensional nilpotent
representations of the quiver $C_{1}$ over $k_{x}$ (see Lemma
\ref{lem structure of torsion}). Since $k=\mathbb{F}_{q}$ we know that
$k_{x}=\mathbb{F}_{q^{d}}$. Thus we have the following isomorphism:
$$\Theta_{x}:\mathbf{H}(C_{1})_{\mathbb{F}_{q^{d}}}\rightarrow\mathbf{H}(\mathscr{T}_{x}),$$
where $\mathbf{H}(C_{1})_{\mathbb{F}_{q^{d}}}$ is the Hall algebra
associated to the the category $\rep_{0}(C_{1})_{\mathbb{F}_{q^{d}}}$.

let $\Lambda=\mathbb{C}[z_{1},z_{2},\cdots]^{\mathfrak{S}_{\infty}}$
be Macdonald's ring of symmetric functions (see \cite{Ma}). The
following result is well-known, due to P. Hall.

\begin{Thm}\label{thm symm funct iso hall alg}
There exists an isomorphism of algebras $\Upsilon: \Lambda\simeq
\mathbf{H}(C_1)_{\mathbb{F}_{q}}$ given by $e_{r}\mapsto v^{r(r-1)}S_{x}(1^r)$,
for any $r\geq 1$, where $e_{r}$ is the $r$-th elementary symmetric function.
\end{Thm}

Let $p_r$ denote the $r$-th power-sum symmetric function and set
$\mathbf{h}_r=\frac{[r]}{r}\Upsilon(p_r)$. Then
$$\mathbf{h}_r=\frac{[r]}{r}\sum_{|\mu|=r}n(l(\mu)-1)S_{x}(\mu),$$ where
$n(l)=\Pi^{l}_{i=1}(1-v^{2i})$ (see \cite{Ma}).

For each $r\in\mathbb{N}$, set
\begin{equation*}
\mathbf{h}_{r,x}=\begin{cases}
                 0& \text{if } r\nmid d\\
                 \Theta_{x}(\mathbf{h}_{r/d})& \text{if } r\mid d
                 \end{cases}
\end{equation*}

\subsection{}\label{subsec element T_r}
For any exceptional point $\lambda_{i}$, $\mathscr{T}_{\lambda_{i}}$
is equivalent to the category $\rep_{0}(C_{p_{i}})_{k}$. Thus we have
an isomorphism
$$\Theta_{\lambda_{i}}:\mathbf{H}(C_{p_{i}})_{k}\rightarrow\mathbf{H}(\mathscr{T}_{\lambda_{i}}),$$
where $\mathbf{H}(C_{p_{i}})_{k}$ is the Hall algebra associated to the category $\rep_{0}(C_{p_{i}})_{k}$.

For any positive integer $m$, there is a natural fully faithful functor $\iota_m:
\rep_{0}(C_m)\rightarrow\rep_{0}(C_{m+1})$ whose image is the full
subcategory consisting of all objects $X$ such that
$\Hom(X,S_m)=\Hom(S_{m+1},X)=0$. The functor $\iota_m$ induces an embedding of Hall algebras
$\mathbf{H}(C_m)\rightarrow\mathbf{H}(C_{m+1})$:
$$u_{S_i}\mapsto\begin{cases}
              u_{S_i} &\text{for}\  1\leq i<m\\
              u_{S_{m+1}(2)}=vu_{S_{m+1}}u_{S_m}-u_{S_m}u_{S_{m+1}}
              &\text{for}\ i=m
              \end{cases}
$$

Hence the composition $\iota_{m-1}\circ\cdots\circ\iota_{2}\circ\iota_{1}$ of functors gives an embedding of categories
$\rep_{0}(C_1)\hookrightarrow\rep_{0}(C_m)$, which induces an
embedding of Hall algebras:
$$\Psi:\mathbf{H}(C_1)\longrightarrow\mathbf{H}(C_{m}).$$

Set $$\mathbf{h}_{r,\lambda_{i}}=
\Theta_{\lambda_{i}}\circ\Psi(\mathbf{h}_r).$$

Finally we define
$$T_r=\sum_{x\in\mathbb{X}}\mathbf{h}_{r,x}\in\mathbf{H}(\Coh(\mathbb{X})),$$
where the sum is taken over all closed points $x$ on $\mathbb{X}$.

\subsection{}\label{subsec more about non-homogeneous tube}
We also need some notations in \cite{H} for the Hall algebra of $\mathbf{H}(C_{m})$.
For any $1\leq l\leq m$, let $\mathcal{M}_{l,\alpha}$ be
the set of all isomorphism classes of modules $M$ in
$\rep_{0}(C_{m})$ such that $\dimv M=\alpha$ and $\soc(M)\subseteq
S_1\oplus\cdots\oplus S_l$.

Let $\delta_{m}$ be the sum of dimension vectors of all simple
modules, i.e. the minimal imaginary root for $C_{m}$. For any $r\in\mathbb{N}$, set
$$c_{l,r}=(-1)^{r}v^{-2lr}\sum_{M\in\mathcal{M}_{l,r\delta_m}}(-1)^{\dim\rm{End(M)}}a_{M}u_{M}\in\mathbf{H}(C_{m}).$$

Then define $p_{l,r}\in\mathbf{H}(C_{m})$ via the following
generating function
$$\sum_{r\geq1}(1-v^{-2lr})p_{l,r}T^{r-1}=\frac{d}{dT}\log C_l(T),$$
where $C_l(T)=1+\sum_{r\geq1}c_{l,r}T^r$.

And define
$$\pi_{l,r}:=\frac{[lr]}{r}p_{l,r}.$$

For any exceptional point $\lambda_{i}$ on $\mathbb{X}$, let $\pi^{i}_{j,k}=\Theta_{\lambda_{i}}(\pi_{j,k})$.
It is not difficult to see that $\pi_{1,r}^{i}=\mathbf{h}_{r,\lambda_{i}}$.

\begin{remark}\label{rem center of HCm}
It was proved in \cite{H1} that there is an algebra isomorphism
$\mathbf{H}(C_m)\simeq U^{+}_v(\widehat{sl}_m)\bigotimes\mathcal{Z}$, where
$\mathcal{Z}=\mathbb{C}[p_{m,1},p_{m,2},...]$ is a central
subalgebra of $\mathbf{H}(C_m)$ and the element $p_{m,r}$ is
homogeneous of degree $r\delta_{m}$.
\end{remark}

\subsection{}\label{subsec star-shaped graph}
We can associate a star-shaped graph $\Gamma$ to the weight type $(\mathbf{p},\underline{\lambda})$:\\

\         \xymatrix{&\stackrel{[1,1]}{\bullet}\rline & \stackrel{[1,2]}{\bullet}\rline & \ldots\rline &\stackrel{[1,p_1-1]}{\bullet}\\
&\stackrel{[2,1]}{\bullet}\rline & \stackrel{[2,2]}{\bullet}\rline & \ldots\rline &\stackrel{[2,p_2-1]}{\bullet}\\
\stackrel{\ast}{\bullet}\uurline\drline\urline&\vdots&\vdots& &\vdots\\
&\stackrel{[n,1]}{\bullet}\rline & \stackrel{[n,2]}{\bullet}\rline &
\ldots\rline &\stackrel{[n,p_n-1]}{\bullet} }\\

As marked in the graph, the central vertex is denoted by $\ast$.
There are $n$ branches and in each branch there are $p_{i}-1$
($1\leq i\leq n$) vertices respectively. We denote by $[i,j]$ the $j$th vertex in the
$i$th branch. Let $\Gamma_{0}=\{\ast,[i,j]|1\leq i\leq n, 1\leq j\leq p_{i}-1\}$ denote the set of vertices of $\Gamma$.

Consider the Kac-Moody algebra $\mfk{g}=\mfk{g}(\Gamma)$ associated
to the graph $\Gamma$. As in \ref{subsec loop alg} and \ref{subsec
quantum loop alg}, we have the loop algebra $\mathcal{L}\mfk{g}$ and
its quantized enveloping algebra $U_{v}(\mathcal{L}\mfk{g})$. The
root systems of $\mfk{g}$ and $\mathcal{L}\mfk{g}$ are denoted by
$\Delta$ and $\hat{\Delta}$ respectively. In view of the graph
$\Gamma$, the simple roots in $\Delta$ are denoted by
$\alpha_{\ast}$ and $\alpha_{ij}$, for $1\leq i\leq n$ and $1\leq
j\leq p_{i}-1$. We also know that
$\hat{\Delta}=\mathbb{Z}^{\ast}\delta\cup\{\Delta+\mathbb{Z}\delta\}$.

From Proposition \ref{prop Grothendieck group}, there is a natural
isomorphism of $\mathbb{Z}$-modules
$K_{0}(\Coh(\mathbb{X}))\cong\hat{Q}$ given by
\begin{align*}
&[S^{i}_{j}]\mapsto\alpha_{ij},\quad \text{for}\ 1\leq j\leq p_{i}-1, 1\leq i\leq n,\\
&[S^{i}_{p_{i}}]\mapsto\delta-\sum_{j=1}^{p_{i}-1}\alpha_{ij},\quad \text{for}\ 1\leq i\leq n,\\
&[\mo(k\vec{c})]\mapsto\alpha_{\ast}+k\delta,\quad \text{for}\ k\in\mathbb{Z}.
\end{align*}

Now by Lemma \ref{lem comute Euler form} we have the following result:
\begin{Lem}
The symmetric Euler form on $K_{0}(\Coh(\mathbb{X}))$ coincides with
the Cartan form on $\hat{Q}$.
\end{Lem}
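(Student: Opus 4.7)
The plan is to verify the identity by direct calculation on a chosen basis, using the explicit values of the Euler form provided in Lemma \ref{lem comute Euler form} together with the identification of generators given just before the statement. Since both the symmetric Euler form on $K_{0}(\Coh(\mathbb{X}))$ and the Cartan form on $\widehat{Q}$ are $\mathbb{Z}$-bilinear and symmetric, it suffices to compare them on a generating set. I would take the generators $\{[\mo],\delta,[S^{i}_{j}]:1\le i\le n,\,1\le j\le p_{i}-1\}$ on the sheaf side, which under the stated identification correspond to $\{\alpha_{\ast},\delta,\alpha_{ij}\}$ on the root-lattice side. Note that $[\mo(k\vec{c})]=[\mo]+k\delta$ in $K_{0}$, and $[S^{i}_{0}]=\delta-\sum_{j=1}^{p_{i}-1}[S^{i}_{j}]$, so these generators indeed span, and no independent check is needed for $[\mo(k\vec{c})]$ or $[S^{i}_{0}]$.

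Next, I would go through the pairings one by one. For the central generator, $\langle[\mo],[\mo]\rangle+\langle[\mo],[\mo]\rangle=2=(\alpha_{\ast},\alpha_{\ast})$. For the imaginary direction, $\langle\delta,\delta\rangle=0$ and $\langle\delta,[S^{i}_{j}]\rangle=\langle[S^{i}_{j}],\delta\rangle=0$ give $(\delta,\delta)=0$ and $(\delta,\alpha_{ij})=0$, matching the fact that $\delta$ is orthogonal to every root in the extended Cartan form. The mixed pairing $\langle[\mo],\delta\rangle+\langle\delta,[\mo]\rangle=1+(-1)=0$ matches $(\alpha_{\ast},\delta)=0$. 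For the branch-to-center pairing, Lemma \ref{lem comute Euler form} yields $(\alpha_{\ast},\alpha_{ij})=-\delta_{j,1}$ (since only $j=1$ contributes via $\langle[S^{i}_{1}],[\mo]\rangle=-1$, and $j=p_{i}$ is excluded from the range $1\le j\le p_{i}-1$ when $p_{i}>1$), which coincides with the Cartan pairing determined by the single edge from $\ast$ to $[i,1]$.

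The final case is the pairing among $[S^{i}_{j}]$'s within and across branches. Across branches ($i\ne i'$) the Euler form vanishes by Lemma \ref{lem comute Euler form}, matching the absence of edges between different branches in $\Gamma$. Within a branch, the relevant computation is $\langle[S^{i}_{j}],[S^{i}_{j'}]\rangle+\langle[S^{i}_{j'}],[S^{i}_{j}]\rangle$, which equals $2$ when $j=j'$, equals $-1$ precisely when $|j-j'|\equiv 1\pmod{p_{i}}$, and $0$ otherwise. The only potential trap is the cyclic structure of the Euler form versus the linear $A_{p_{i}-1}$ structure of the branch: the edge ``$j\equiv j'+1 \pmod{p_{i}}$'' could wrap around to produce an unwanted $-1$ between $[S^{i}_{p_{i}-1}]$ and $[S^{i}_{0}]$. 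But $[S^{i}_{0}]$ is \emph{not} one of our branch generators (it has been absorbed into $\delta$), so restricting to $1\le j,j'\le p_{i}-1$ the cyclic relation reduces to the ordinary linear $A_{p_{i}-1}$ adjacency, matching the Cartan form on the branch.

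The main potential obstacle is really only bookkeeping: making sure the imaginary generator $\delta$ and the exceptional simple $[S^{i}_{0}]$ are not double-counted, and that the cyclic indexing modulo $p_{i}$ in Lemma \ref{lem comute Euler form} truncates correctly to the linear branch $[i,1]$--$[i,2]$--$\cdots$--$[i,p_{i}-1]$ of $\Gamma$. Once the basis $\{[\mo],\delta,[S^{i}_{j}]\}$ is fixed, every entry in the symmetric Euler matrix is computed by a single application of Lemma \ref{lem comute Euler form} and compared with the combinatorial Cartan form read off from the star-shaped graph $\Gamma$, giving the stated equality.
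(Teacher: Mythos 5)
Your verification is correct and is exactly the argument the paper intends: the lemma is stated as an immediate consequence of the explicit Euler form values in Lemma \ref{lem comute Euler form}, and your case-by-case symmetrization on the generating set $\{[\mo],\delta,[S^{i}_{j}]\}$ simply writes out that omitted check, including the one genuinely delicate point (the cyclic adjacency modulo $p_{i}$ truncating to the linear $A_{p_{i}-1}$ branch once $[S^{i}_{0}]$ is absorbed into $\delta$). Nothing is missing.
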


\subsection{}\label{subsec Schiffmann's result}
In \cite{S} (also see \cite{S0}) Schiffmann has proved that the Hall
algebra $\mathbf{H}(\Coh(\mathbb{X}))$ provides a realization of the
quantized enveloping algebra of a certain nilpotent subalgebra of
$\mathcal{L}\mathfrak{g}$, denoted by
$U_{v}(\widehat{\mathfrak{n}})$.

Let us recall the definition of $U_{v}(\widehat{\mathfrak{n}})$.
First, for each $i$, we denote by $U_{i}$ the subalgebra of
$U_{v}(\mathcal{L}\mathfrak{g})$ generated by $x_{[i,j],k}^{\pm}$,
$h_{[i,j],l}$ and $K_{[i,j]}^{\pm 1}$ ($1\leq j\leq p_{i}-1$,
$k\in\mathbb{Z}$, $l\in\mathbb{Z}\setminus\{0\}$). It is clear
that this subalgebra is isomorphic to
$U_{v}(\widehat{\mathfrak{sl}}_{p_{i}})$. Denote the Chevalley
generators of $U_{v}(\widehat{\mathfrak{sl}}_{p_{i}})$ by
$E^{i}_{j}$ and $F^{i}_{j}$ ($1\leq j\leq p_{i}$). Then the image of $E^{i}_{j}$ under Beck's
isomorphism is $x^{+}_{[i,j],0}$, for $1\leq
j\leq p_{i}-1$. But the image of $E^{i}_{p_{i}}$, which we denote by $\varepsilon_{i}$, is not a Drinfeld
generator. Now let $U_{i}^{+}$ be the subalgebra
generated by $x^{+}_{[i,j],0}$ and $\varepsilon_{i}$. Thus
$U_{i}^{+}$ is isomorphic to the standard positive part of
$U_{v}(\widehat{\mathfrak{sl}}_{p_{i}})$. Finally,
$U_{v}(\widehat{\mathfrak{n}})$ is defined to be the subalgebra of
$U_{v}(\mathcal{L}\mathfrak{g})$ generated by $x_{\ast,k}^{+}$,
$h_{\ast,r}$ and $U_{i}^{+}$ ($k\in\mathbb{Z}$, $r\geq 1$, $1\leq
i\leq n$).

Now let $\mathbf{C}(\Coh(\mathbb{X}))$ be the subalgebra of
$\mathbf{H}(\Coh(\mathbb{X}))$ generated by
$u_{\mathscr{O}(k\vec{c})}$, $u_{S^{i}_{j}}$ and $T_{r}$ for
$k\in\mathbb{Z}$, $1\leq i\leq n$, $1\leq j\leq p_{i}$ and
$r\in\mathbb{N}$. It is called the \textit{composition algebra} of $\Coh(\mathbb{X})$.
The following theorem was proved by Schiffmann:

\begin{Thm}[\cite{S}]\label{thm Schiffmann}
The assignment $x^{+}_{[i,j],0}\mapsto u_{S^{i}_{j}}$ for $1\leq
j\leq p_{i}-1$, $\varepsilon_{i}\mapsto u_{S^{i}_{p_{i}}}$,
$x_{\ast,k}\mapsto u_{\mathscr{O}(k\vec{c})}$, $h_{\ast,r}\mapsto
T_{r}$ gives an epimomorphism of algebras
$$\Phi: U_{v}(\widehat{\mathfrak{n}})\twoheadrightarrow\mathbf{C}(\Coh(\mathbb{X})).$$
Moreover, if $\mathfrak{g}$ is of finite or affine type, $\Phi$ is
an isomorphism.
\end{Thm}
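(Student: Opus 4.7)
The plan is to break the theorem into its two assertions and handle them separately. For the epimorphism, I would first note that surjectivity is automatic from the definition of $\mathbf{C}(\Coh(\mathbb{X}))$: the target is generated by exactly the images of the Drinfeld-type generators $x^{+}_{[i,j],0}$, $\varepsilon_i$, $x^{+}_{\ast,k}$, and $h_{\ast,r}$. The real work is showing the map is well-defined, i.e., that the prescribed elements in $\mathbf{H}(\Coh(\mathbb{X}))$ satisfy all the defining relations of $U_v(\widehat{\mathfrak{n}})$. I would group the relations by the pieces of the star-shaped graph $\Gamma$ they involve.

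The relations living entirely inside a single branch $U_i^+$ (the standard positive part of $U_v(\widehat{\mathfrak{sl}}_{p_i})$) can be reduced to Ringel--Green (Theorem \ref{thm Ringel-Green}): since $\mathscr{T}_{\lambda_i}\simeq\rep_0(C_{p_i})_k$ by Lemma \ref{lem structure of torsion}, the composition subalgebra generated by $u_{S^i_1},\ldots,u_{S^i_{p_i}}$ inside $\mathbf{H}(\mathscr{T}_{\lambda_i})$ is already isomorphic to $U_v^+(\widehat{\mathfrak{sl}}_{p_i})$. Relations between generators in two different branches (i.e., $u_{S^i_j}$ and $u_{S^{i'}_{j'}}$ with $i\ne i'$) reduce to commutativity, which follows at once from $\Hom=\Ext^1=0$ between different tubes (disjoint supports, heredity, and Lemma \ref{lem comute Euler form}). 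Relations between $u_{\mathscr{O}(k\vec{c})}$ and $u_{S^i_j}$ for $j\ne 1, p_i$ are similarly trivial because $\Hom(\mathscr{O}(k\vec{c}),S^i_j)=\Ext^1(\mathscr{O}(k\vec{c}),S^i_j)=0$ in those slots, yielding a Serre relation with trivial bracket.

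The genuinely substantive relations are those at the central vertex: the commutation $[h_{\ast,r},u_{\mathscr{O}(k\vec{c})}]=\tfrac{1}{r}[2r]u_{\mathscr{O}((k+r)\vec{c})}$, the loop-type relation (5) among the $u_{\mathscr{O}(k\vec{c})}$'s themselves, and the cross relation between $u_{\mathscr{O}(k\vec{c})}$ and $u_{S^i_1}, u_{S^i_{p_i}}$. To handle these, I would compute the relevant Hall products directly using the short exact sequences recorded in Section \ref{subsec indecomposable torsion sheaf}: multiplication by $\pi_x$ gives $0\to\mathscr{O}\to\mathscr{O}(d\vec c)\to S_x\to 0$ and multiplication by $x_i$ gives the analogous sequences building each indecomposable in $\mathscr{T}_{\lambda_i}$ from $\mathscr{O}$-shifts. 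Expanding $T_r$ as the sum $\sum_x \mathbf{h}_{r,x}$ over all closed points and using the Hall-Macdonald identities (Theorem \ref{thm symm funct iso hall alg}) together with Hubery's formulas of Section \ref{subsec more about non-homogeneous tube} would convert the commutator with $u_{\mathscr{O}(k\vec c)}$ into a generating-function identity that telescopes to the desired $u_{\mathscr{O}((k+r)\vec c)}$. The loop relation (5) among line bundles should be handled by a direct calculation of $u_{\mathscr{O}(k\vec c)}\cdot u_{\mathscr{O}(l\vec c)}$ expanded via the Hall product along all extensions; the parallel to the $\mathbb{P}^1$-case treated by Kapranov in \cite{K} and by Baumann--Kassel in \cite{BK} is exact on the generic locus, and the extra tubes at the exceptional points contribute symmetrically, so the two sides of the identity should match term-by-term once one projects onto the tube components. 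I expect the combinatorics of these $T_r$-calculations, particularly keeping track of the exceptional-point contributions simultaneously with the ordinary-point contributions, to be the principal obstacle.

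Finally, for the isomorphism statement when $\mathfrak{g}$ is of finite or affine type, I would argue by comparing graded dimensions. The Hall algebra $\mathbf{H}(\Coh(\mathbb{X}))$ is $K_0(\Coh(\mathbb{X}))$-graded, and under the identification with $\widehat{Q}$ of Section \ref{subsec star-shaped graph} its graded dimensions in a positive cone are governed by the Hall polynomials counting subobjects; on the other side, $U_v(\widehat{\mathfrak{n}})$ has a PBW basis whose graded dimensions are controlled by the positive roots of $\mathcal{L}\mathfrak{g}$. For parabolic $\mathbb{X}$ (finite $\mathfrak{g}$) one recovers classical tilting/derived equivalences to a tame quiver, whereas for elliptic $\mathbb{X}$ (affine $\mathfrak{g}$) the comparison proceeds through a PBW-type basis of indecomposables in $\Coh(\mathbb{X})$. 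In both cases a character argument shows the epimorphism $\Phi$ is injective in each graded piece and hence an isomorphism.
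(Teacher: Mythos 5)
Before anything else, note that the paper does not prove this theorem: it is imported wholesale from Schiffmann \cite{S}, and the only pieces of that proof reused here are the relations (a)--(e) recorded in \ref{subsec known relations}. So your proposal can only be measured against Schiffmann's argument, whose overall architecture you have reconstructed correctly: surjectivity is indeed immediate from the definition of $\mathbf{C}(\Coh(\mathbb{X}))$, the single-branch relations do reduce to the cyclic-quiver case of Theorem \ref{thm Ringel-Green}, the cross-branch relations and those involving $u_{S^i_j}$ with $j\neq 1,p_i$ do follow from vanishing of $\Hom$ and $\Ext^1$, and the real content sits at the central vertex and in the injectivity statement.

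There are, however, two genuine gaps. First, ``checking the defining relations of $U_v(\widehat{\mathfrak{n}})$'' is not a well-posed step as you have set it up: in \ref{subsec Schiffmann's result} the algebra $U_v(\widehat{\mathfrak{n}})$ is \emph{defined as a subalgebra} of $U_v(\mathcal{L}\mathfrak{g})$, not by a presentation, and a subalgebra of a presented algebra is not presented by the restrictions of those relations. To get a homomorphism \emph{out of} it you must either establish a presentation of this subalgebra or work with an auxiliary algebra given by an explicit list of generators and relations and relate it to the subalgebra afterwards --- this is precisely how \cite{S} is organized, and your plan silently assumes it. Second, the two hard computations are asserted rather than performed. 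The central-vertex relations --- $[T_r,u_{\mathscr{O}(k\vec c)}]=\frac{[2r]}{r}u_{\mathscr{O}((k+r)\vec c)}$, relation (5) among the $u_{\mathscr{O}(k\vec c)}$ (which involves counting extensions giving rank-two bundles, not only the torsion contributions), and the cubic Serre relations with $u_{S^i_1}$ and $u_{S^i_{p_i}}$ --- require genuine Hall-number calculations with the decomposition $T_r=\sum_x\mathbf{h}_{r,x}$; ``the extra tubes contribute symmetrically'' is a hope, not an argument. Likewise, the injectivity ``by character argument'' needs the inequality $\dim U_v(\widehat{\mathfrak{n}})_\alpha\le\dim\mathbf{C}(\Coh(\mathbb{X}))_\alpha$, i.e.\ an \emph{upper} bound on the graded pieces of the abstract side together with a \emph{lower} bound on the Hall side via a linearly independent PBW-type family (cf.\ Proposition \ref{prop PBW-basis coh sheaf}). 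Establishing the upper bound is exactly why the theorem is restricted to parabolic and elliptic types, and in the elliptic case it is the hardest part of \cite{S}; your one-sentence sketch does not engage with it.
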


The subalgebra $U_{v}(\widehat{\mathfrak{n}})$ can be viewed as a certain ``positive part" of
$U_{v}(\mathcal{L}\mathfrak{g})$. But by definition it is not generated by part of Drinfeld generators. Thus the correspondence between
generators of $\mathbf{C}(\Coh(\mathbb{X}))$ and Drinfeld generators of $U_{v}(\mathcal{L}\mathfrak{g})$ is not completely
explicit.

As in \ref{subsec double Hall alg}, we have the double Hall algebra $\mathbf{DH}(\Coh(\mathbb{X}))$. We define the \textit{double composition algebra} $\mathbf{DC}(\Coh(\mathbb{X}))$ to be the subalgebra of
$\mathbf{DH}(\Coh(\mathbb{X}))$ generated by $\mathbf{C}(\Coh(\mathbb{X}))$,
$\mathbf{C}^{-}(\Coh(\mathbb{X}))$ (the subalgebra of $\mathbf{H}^{-}(\Coh(\mathbb{X}))$ defined similar to $\mathbf{C}(\Coh(\mathbb{X}))$) and the torus $\mathbf{T}$. Recall that $\mathbf{T}=\{K_{\alpha}|\alpha\in K_{0}(\Coh(\mathbb{X}))\}$. The following notations will be used: $K_{\ast}=K_{[\mathscr{O}]}$, $K_{[i,j]}=K_{[S_{j}^{i}]}$ for $1\leq i\leq n$, $1\leq j\leq p_{i}-1$.

We will show that the Drinfeld generators and relations for the whole quantum loop
algebra can be fully understood in the double composition algebra.

\subsection{}\label{subsec main theorem}
We keep the notations in the previous subsections.

Note that in
\ref{subsec element T_r} and \ref{subsec more about non-homogeneous
tube} we have defined the elements $T_{k}$, $\pi^{i}_{j,k}$ in the
Hall algebra $\mathbf{H}(\Coh(\mathbb{X}))$ for any $1\leq i\leq n$,
$1\leq j\leq p_{i}-1$ and $k\geq 1$. Similarly we can define the elements $T^{-}_{k}$, $\pi^{-i}_{j,k}$
in the negative Hall algebra $\mathbf{H}^{-}(\Coh(\mathbb{X}))$.

Moreover, we define
$$\eta^{+}_{i,j}=v^{1-j}\Theta_{\lambda_{i}}(\sum_{M\in\mathcal{M}_{j+1,\delta-e_j}}(1-v^2)^{dim End(M)-1}u^+_M)K_{[i,j]},$$
$$\eta^{-}_{i,j}=-v^{-j}\Theta^{-}_{\lambda_{i}}(\sum_{M\in\mathcal{M}_{j+1,\delta-e_j}}(1-v^2)^{dim
End(M)-1}u^-_M)K^-_{[i,j]}.$$

The following theorem is the main result of this paper:
\begin{Thm}\label{thm main}
For any star-shaped graph $\Gamma$, let $\mathfrak{g}$ be the
Kac-Moody algebra and $\mathbb{X}$ be the weighted projective line
associate to $\Gamma$ respectively. Then the following elements together with $\{K_{s}\in\mathbf{T}|s\in\Gamma_{0}\}$ in
the double Hall algebra $\mathbf{DH}(\Coh(\mathbb{X}))$ satisfy the
defining relations of the quantum loop algebra $U_{v}(\mathcal{L}\mathfrak{g})$ (see
\ref{subsec quantum loop alg}).

\begin{equation*}
h_{s,r}=\begin{cases}
        T_{r}&s=\ast,\ r>0\\
        -T^{-}_{-r}&s=\ast,\ r<0\\
        \pi^{i}_{j+1,r}-(v^{r}+v^{-r})\pi^{i}_{j,r}+\pi^{i}_{j-1,r}&s=[i,j],\
        r>0\\
        -\pi^{-i}_{j+1,-r}+(v^{r}+v^{-r})\pi^{-i}_{j,-r}-\pi^{-i}_{j-1,-r}&s=[i,j],\
        r<0\\
        \end{cases}
\end{equation*}

\begin{equation*}
x^{+}_{s,t}=\begin{cases}
            u^{+}_{\mathscr{O}(t\vec{c})}& s=\ast,\ t\in\mathbb{Z}\\
            u^{+}_{S^{i}_{j}}& s=[i,j],\ t=0\\
            \frac{t}{[2t]}[h_{[i,j],t},x^+_{[i,j],0}] & s=[i,j],\ t\geq 1\\
            \eta^{-}_{i,j}& s=[i,j],\ t=-1\\
            \frac{-k}{[-2k]}[h_{[i,j],-k},x^+_{[i,j],-1}]& s=[i,j],\ t=-k-1,\ k>0
            \end{cases}
\end{equation*}

\begin{equation*}
x^{-}_{s,t}=\begin{cases}
            -vu^{-}_{\mathscr{O}(-t\vec{c})}&s=\ast,\ t\in\mathbb{Z}\\
            \eta^{+}_{i,j}&s=[i,j],\ t=1\\
            \frac{-k}{[2k]}[h_{[i,j],k},x^-_{[i,j],1}]&s=[i,j],\ t=k+1,\ k>0\\
            -vu^{-}_{S^{i}_{j}}&s=[i,j],\ t=0\\
            \frac{k}{[-2k]}[h_{[i,j],-k},x^-_{[i,j],0}]&s=[i,j],\ t=-k,\ k>0
            \end{cases}
\end{equation*}
\end{Thm}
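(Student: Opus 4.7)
The plan is to verify Drinfeld's relations by splitting them into three groups according to where their ingredients live in the double Hall algebra $\dh(\Cohx)$: (a) relations confined to a single tube $\mathscr{T}_{\lambda_i}$, (b) relations among elements of the positive (or negative) Hall algebra that cross tubes or involve the line-bundle generators and the central vertex $\ast$, and (c) mixed relations requiring both halves of the double. Schiffmann's Theorem \ref{thm Schiffmann} already covers much of (b), and the torus relations (1), (3) together with $[h_{s,r}, K_j] = 0$ are immediate from the reduced double structure via $K_\alpha \otimes 1 = 1 \otimes K_{-\alpha}$ and the skew-Hopf pairing $\varphi$; everything else demands real work.

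For group (a), the equivalence $\mathscr{T}_{\lambda_i} \simeq \rep_{0}(C_{p_i})_k$ and the isomorphism $\Theta_{\lambda_i}$ transport every in-tube relation into the Hall algebra of a cyclic quiver, where Hubery's work supplies Drinfeld-type generators that already satisfy the relations of $U_v^+(\widehat{\mathfrak{sl}}_{p_i})$. The linear combinations $h_{[i,j],r} = \pi^i_{j+1,r} - (v^r+v^{-r})\pi^i_{j,r} + \pi^i_{j-1,r}$ thus inherit the single-branch instances of (2), (4), (5) and the Serre-type relation (7); the negative analogues follow symmetrically via $\Theta^{-}_{\lambda_i}$. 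For group (b), the task is to check that the recursively defined $x^+_{[i,j],t} = \frac{t}{[2t]}[h_{[i,j],t}, x^+_{[i,j],0}]$ commute correctly with $u^+_{\mo(k\vec{c})}$, with $T_r$, and with simples from other branches. This reduces to a direct Hall-algebra calculation driven by the canonical short exact sequences $0 \to \mo((j-1)\vec{x}_i) \to \mo(j\vec{x}_i) \to S^i_j \to 0$ and the Euler-form table of Lemma \ref{lem comute Euler form}; the essential geometric input is the explicit $\Hom$ and $\Ext^1$ pairing between a line bundle $\mo(k\vec{c})$ and sheaves concentrated at an exceptional point.

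Group (c) is the genuinely new content and the principal obstacle. The crux is the commutator relation (6), $[x^+_{i,k}, x^-_{j,l}] = \delta_{ij}(\psi_{i,k+l} - \varphi_{i,k+l})/(v - v^{-1})$. Unwinding the defining identities $D(u^-_\alpha, u^+_\beta)$ in $\widetilde{\dh}(\ma)$ expresses each such bracket as a sum over the coproducts $\Delta(u^+_\beta)$, $\Delta(u^-_\alpha)$ paired through $\varphi$. The projected Lemmas \ref{lem comult considered 1} and \ref{lem comult considered 2} are the key technical tool: they will show that only coproduct terms whose factors are supported on matching components (the same tube, or the line-bundle chain) pair nontrivially, collapsing an a priori infinite double sum to manageable size. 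The surviving expression, after rearrangement via the generating-function definitions of $\psi_{i,k+l}$ and $\varphi_{i,k+l}$, can then be matched against the explicit formulas in terms of $K_i$ and $h_{i,\bullet}$. The deepest subcase is $s = [i,j]$ with $k, l$ both nonzero, where both sides are defined recursively from $h$'s and the primitive elements $u^\pm_{S^i_j}$, $\eta^\pm_{i,j}$; an interlocking induction with relation (4) anchored in the base cases $(k,l) \in \{(0,0),(0,1),(-1,0),(-1,1)\}$ is expected to close the argument. The combinatorics of Hall numbers between line bundles and exceptional torsion sheaves, tamed by the coproduct-reduction lemmas, is where the technical difficulty concentrates.
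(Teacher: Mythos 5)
Your proposal follows essentially the same architecture as the paper's proof: the in-tube relations are transported to $\mathbf{H}(C_{p_i})$ via $\Theta_{\lambda_i}$ and settled by Hubery's generators together with Beck's isomorphism (Section \ref{sec relations in a tube}), the positive-part relations are reduced to Schiffmann's known identities plus explicit Hall-number computations with the extensions between $\mathscr{O}(k\vec{c})$ and the exceptional torsion sheaves (Section \ref{sec relation of positive part}), and the mixed relations are handled through the Drinfeld-double identities with Lemmas \ref{lem comult considered 1} and \ref{lem comult considered 2} discarding the irrelevant coproduct terms, followed by $\ad(h)$-propagation from base cases (Section \ref{sec relation of the whole part}). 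The only cosmetic difference is that the paper obtains the in-tube instances of relation (6) for free from the isomorphism $\mathbf{DC}(C_{p_i})\simeq U_{v}(\widehat{\mathfrak{sl}}_{p_i})$ rather than by the fresh interlocking induction you sketch, but this does not change the substance of the argument.
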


The proof of this theorem consists of the next three sections.

\begin{remark}
(1). By the above theorem we have an algebra homomorphism
$$\Xi:U_{v}(\mathcal{L}\mathfrak{g})\rightarrow\mathbf{DH}(\Coh(\mathbb{X})).$$

The image of $\Xi$ is in fact the double composition algebra $\mathbf{DC}(\Coh(\mathbb{X}))$. This can be easily seen from the following results proved in \cite{H}:
$$u_{S^{i}_{p_{i}}}=(-1)^{n}[-vu^{-}_{S^{i}_{p_{i}-1}},\cdots,[-vu^{-}_{S^{i}_{2}},\eta_{i,1}^{+}]_{v^{-1}}]_{v^{-1}}K_{[S_{p_{i}}^{i}]},$$
where the $v$-commutator is defined as in Lemma \ref{lem T_omega}. And for any $i$, the elements $\pi^{i}_{j+1,r}-(v^{r}+v^{-r})\pi^{i}_{j,r}+\pi^{i}_{j-1,r}$ and $\eta^{+}_{i,j}$ are in $\Theta_{i}(\mathbf{C}(C_{p_{i}}))\subset\mathbf{C}(\Coh(\mathbb{X}))$.

(2). Shortly after the first version of this paper appeared in arXiv, Burban and Schiffmann proved in \cite{BS2} that the composition algebra $\mathbf{C}(\Coh(\mathbb{X}))$ is a topological sub-bialgebra of $\mathbf{H}(\Coh(\mathbb{X}))$. Thus we can construct the reduced Drinfeld double of the algebra $\mathbf{C}(\Coh(\mathbb{X}))$. Now it is clear that our double composition algebra $\mathbf{DC}(\Coh(\mathbb{X}))$ coincides with the reduced Drinfeld double of $\mathbf{C}(\Coh(\mathbb{X}))$. However, we do not need this result throughout the paper.

(3). We expect that our homomorphism $\Xi$ is injective, namely $\Xi:U_{v}(\mathcal{L}\mathfrak{g})\simeq\mathbf{DC}(\Coh(\mathbb{X}))$, at least in the case that $\mathfrak{g}$ is of finite or affine type. Note that it was shown in \cite{BS2} that the two algebras $U_{v}(\mathcal{L}\mathfrak{g})$ and $\mathbf{DC}(\Coh(\mathbb{X}))$ are isomorphic in the finite type case, where the isomorphism is induced by a derived equivalence functor of the category $\Coh(\mathbb{X})$ and representations of a certain tame quiver. But it is difficult to understand Drinfeld's presentation by such isomorphisms. In particular, even if our $\Xi$ is an isomorphism, it cannot be the isomorphism induced by a derived equivalence functor for most cases. We will give a more detailed explanation in Section 9.
\end{remark}

\section{Relations in the subalgebras isomorphic to $U_{v}(\widehat{\mathfrak{sl}}_{p_{i}})$}\label{sec relations in a tube}

\subsection{Relations in each tube $\mathscr{T}_{\lambda_{i}}$}
Recall the star-shaped graph $\Gamma$ in \ref{subsec
star-shaped graph}. We can see that for any fixed
$i\in\{1,2,\cdots,n\}$, the full subgraph consisting of vertices
$\{[i,j]|1\leq j\leq p_{i}-1\}$ is a Dynkin diagram of type
$A_{p_{i}-1}$. Thus the relations to be satisfied by the elements
$x^{\pm}_{[i,j],k}$, $h_{[i,j],r}$ for all $1\leq j\leq p_{i}-1$,
$k\in\mathbb{Z}$, $r\in\mathbb{Z}\setminus\{0\}$ are actually the
defining relations of $U_{v}(\widehat{\mathfrak{sl}}_{p_{i}})$. We
will prove them in this section.

Note that by definition the elements $x^{+}_{[i,j],k}$,
$x^{-}_{[i,j],l}$, $h_{[i,j],r}$ for $1\leq\ j\leq p_{i}-1$,
$k\in\mathbb{N}$, $l,r\in\mathbb{N}^{\ast}$ are all in the
subalgebra $\mathbf{H}(\mathscr{T}_{\lambda_{i}})$, which is
isomorphic to $\mathbf{H}(C_{p_{i}})$. Thus we can use the method
developed by Hubery in \cite{H}, where he explicitly write down the
elements in $\mathbf{H}C_{m}$ satisfying Drinfeld relations of
$U^{+}_{v}(\widehat{\mathfrak{sl}}_{m})$. Then by the isomorphism
$\Theta_{\lambda_{i}}$, we can transfer the result to
$\mathbf{H}(\mathscr{T}_{\lambda_{i}})\subset\mathbf{H}(\Coh(\mathbb{X}))$.
Namely we have:

\begin{Prop}[\cite{H}]\label{prop Hubery's result}
For any fixed $i\in\{1,2,\cdots,n\}$, the elements
$x^{+}_{[i,j],k}$, $x^{-}_{[i,j],l}$, $h_{[i,j],r}$ for $1\leq\
j\leq p_{i}-1$, $k\in\mathbb{N}$, $l,r\in\mathbb{N}^{\ast}$ satisfy
the Drinfeld relations of
$U^{+}_{v}(\widehat{\mathfrak{sl}}_{p_{i}})$.
\end{Prop}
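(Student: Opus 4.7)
The plan is to reduce this proposition to Hubery's theorem for cyclic quivers by transporting it through the Hall algebra isomorphism $\Theta_{\lambda_i}\colon \mathbf{H}(C_{p_i})\xrightarrow{\sim}\mathbf{H}(\mathscr{T}_{\lambda_i})$. The first step is to check that every element named in the statement genuinely lives inside the subalgebra $\mathbf{H}(\mathscr{T}_{\lambda_i})\subset\mathbf{H}(\Coh(\mathbb{X}))$. By the construction in \ref{subsec more about non-homogeneous tube} we have $h_{[i,j],r}=\pi^{i}_{j+1,r}-(v^{r}+v^{-r})\pi^{i}_{j,r}+\pi^{i}_{j-1,r}$, which manifestly lies in $\Theta_{\lambda_i}(\mathbf{H}(C_{p_i}))$. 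The generators $u^{+}_{S^{i}_{j}}$ are simple objects of $\mathscr{T}_{\lambda_i}$ and the higher modes $x^{+}_{[i,j],t}=\frac{t}{[2t]}[h_{[i,j],t},x^{+}_{[i,j],0}]$ are built by iterated commutators with the $h_{[i,j],r}$, so they too stay in $\mathbf{H}(\mathscr{T}_{\lambda_i})$. The same remarks apply to the $x^{-}_{[i,j],l}$ with $l\in\mathbb{N}^{*}$.

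The second step is to invoke Hubery's main result in \cite{H}: inside $\mathbf{H}(C_m)$ there is an explicit family of elements $\tilde{x}^{\pm}_{j,k}$ and $\tilde{h}_{j,r}$, defined by precisely the same recipes used here—the $\tilde{h}_{j,r}$ as alternating sums of the power-sum symmetric functions $\pi_{j,r}$, and the higher modes $\tilde{x}^{\pm}_{j,k}$ by the commutator formulas normalized with $\frac{k}{[2k]}$—which together satisfy all Drinfeld relations of $U^{+}_{v}(\widehat{\mathfrak{sl}}_{m})$. Applying $\Theta_{\lambda_i}$ then transports these relations into $\mathbf{H}(\mathscr{T}_{\lambda_i})$. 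The correspondence $\Theta_{\lambda_i}(\tilde{x}^{+}_{j,0})=u^{+}_{S^{i}_{j}}$ and $\Theta_{\lambda_i}(\tilde{h}_{j,r})=h_{[i,j],r}$ then propagates by induction on $k$ to give $\Theta_{\lambda_i}(\tilde{x}^{\pm}_{j,k})=x^{\pm}_{[i,j],k}$, because both sides are obtained from the same commutator formula applied to corresponding inputs.

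The main obstacle is not any deep computation but rather the bookkeeping to make sure that all normalizations and sign conventions in \ref{subsec more about non-homogeneous tube}—the factor $\frac{[lr]}{r}$ in the definition of $\pi_{l,r}$, the sign $(-1)^{r}v^{-2lr}$ in the definition of $c_{l,r}$, the linear combination defining $h_{[i,j],r}$, and the commutator normalization $\frac{t}{[2t]}$—match exactly those chosen by Hubery. Since the paper deliberately transcribed those formulas from \cite{H}, this matching is by design rather than coincidence, and once it is confirmed the proposition is an immediate consequence of Hubery's theorem applied through $\Theta_{\lambda_i}$.
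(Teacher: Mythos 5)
Your proposal is correct and takes essentially the same approach as the paper: the paper likewise proves this proposition by citing Hubery's result for $\mathbf{H}(C_{m})$ and transporting it through the isomorphism $\Theta_{\lambda_{i}}$ induced by the equivalence $\mathscr{T}_{\lambda_{i}}\simeq\rep_{0}(C_{p_{i}})$, with the definitions in \ref{subsec more about non-homogeneous tube} transcribed from \cite{H} precisely so that the normalizations match. The only difference is that the paper also recalls the internals of Hubery's argument (Beck's braid-group formulas identifying the Drinfeld generators as explicit Hall-algebra elements), whereas you treat that as a black box.
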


This result can be easily extended to $U_{v}(\widehat{\mathfrak{sl}}_{p_{i}})$:

\begin{Cor}\label{Cor relations in a tube}
For any fixed $i\in\{1,2,\cdots,n\}$, the elements
$x^{\pm}_{[i,j],k}$, $h_{[i,j],r}$ for all $1\leq j\leq p_{i}-1$,
$k\in\mathbb{Z}$, $r\in\mathbb{Z}\setminus\{0\}$ satisfy the
Drinfeld relations for $U_{v}(\widehat{\mathfrak{sl}}_{p_{i}})$.
\end{Cor}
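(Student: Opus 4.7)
The goal is to extend Proposition \ref{prop Hubery's result}, which gives the ``positive-part'' Drinfeld relations among the elements $x^{+}_{[i,j],k}$ ($k\ge 0$), $x^{-}_{[i,j],l}$ ($l\ge 1$), $h_{[i,j],r}$ ($r\ge 1$) in the Hall algebra $\mathbf{H}(\mathscr{T}_{\lambda_i})$, to the full set of Drinfeld relations for $U_v(\widehat{\mathfrak{sl}}_{p_i})$, now involving elements with indices of either sign. My plan is to organize the verification by the way the indices enter, exploiting the fact that $\mathscr{T}_{\lambda_i}\simeq \rep_0(C_{p_i})_k$ so that all the computations can be carried out inside $\mathbf{DH}(\mathscr{T}_{\lambda_i})$, which is a subalgebra of $\mathbf{DH}(\Coh(\mathbb{X}))$.

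First I would handle the purely negative-side relations, i.e.\ those involving only $x^{-}_{[i,j],k}$ with $k\le 0$ and $h_{[i,j],r}$ with $r<0$. Since $\mathbf{H}^{-}(\mathscr{T}_{\lambda_i})$ is built from the same combinatorial data as $\mathbf{H}(\mathscr{T}_{\lambda_i})$ but with the opposite bilinear form in its structure constants, the isomorphism with Macdonald's symmetric functions in theorem \ref{thm symm funct iso hall alg} and the whole construction of $\pi^{i}_{j,k}$ from \ref{subsec more about non-homogeneous tube} go through verbatim on the negative side. Applying Proposition \ref{prop Hubery's result} inside $\mathbf{H}^{-}(C_{p_i})$ and transporting via $\Theta^{-}_{\lambda_i}$ yields the required negative-side relations between $\pi^{-i}_{j,k}$ and the negative $x$-generators.

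Next I would treat the mixed relations, those involving both positive and negative Drinfeld generators, in particular relations (3)--(6) of \ref{subsec quantum loop alg}. The torus relation (3) is immediate from the compatibility of the symmetric Euler form on $K_0(\Coh(\mathbb{X}))$ with the Cartan form, which was recorded in the lemma at the end of \ref{subsec star-shaped graph}. The key step is the $h$-$x$ relation (4) and the $[x^{+},x^{-}]$ relation (6). For the base case $(k,l)=(0,1)$ or $(1,0)$, these follow by explicit computation of $[u^{+}_{S^{i}_{j}},\eta^{+}_{i,j'}]$ and $[u^{+}_{\mathscr{O}(\cdot)},\eta^{-}_{i,j}]$ using the Ringel skew-Hopf pairing $\varphi$ and the comultiplication formula from \ref{subsec extended Hall alg}, restricted to $\mathscr{T}_{\lambda_i}$, together with the defining formulas for $\eta^{\pm}_{i,j}$. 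For general $(k,l)$ with one of the indices nonzero, I would then propagate the relation by induction using the recursive definitions
\[
x^{+}_{[i,j],t}=\tfrac{t}{[2t]}[h_{[i,j],t},x^{+}_{[i,j],0}],\qquad x^{-}_{[i,j],k+1}=\tfrac{-k}{[2k]}[h_{[i,j],k},x^{-}_{[i,j],1}],
\]
and their negative-index analogues, together with the Jacobi identity, reducing each new $[x^{+}_{[i,j],k},x^{-}_{[i,j'],l}]$ to a bracket of things with smaller $|k|+|l|$ plus already-verified $h$-$x$ and $h$-$h$ commutators.

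The Serre-type relations (7) and the ``braid'' relations (5) among elements of the same sign are the straightforward part once the induction is set up, because they pull back to identities in $\mathbf{H}(\mathscr{T}_{\lambda_i})$ or $\mathbf{H}^{-}(\mathscr{T}_{\lambda_i})$ already covered by Proposition \ref{prop Hubery's result} (or its negative-side twin) after one clears $K_{\mu}$-factors. The main obstacle will be the $[x^{+},x^{-}]$ identity in mixed-sign cases: matching the two explicit series $\sum\psi_{i,k+l}u^{k+l}$ and $\sum\varphi_{i,k+l}u^{k+l}$ with the Hall-algebra output of the skew-Hopf pairing requires a careful identification of the generating functions built from $h_{[i,j],r}$ (with both signs of $r$) with the $\psi$- and $\varphi$-series. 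I expect this to be the technical heart of the argument, and it is precisely the kind of computation that the paper will then generalize in section~\ref{sec relation of the whole part} for the vertex~$\ast$.
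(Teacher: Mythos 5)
Your proposal diverges fundamentally from the paper's argument, and as written it leaves the essential content of the corollary unproved. The paper does not verify any of the mixed-sign Drinfeld relations directly. Instead it observes that the reduced Drinfeld double $\mathbf{DC}(C_{p_i})$ is isomorphic to $U_v(\widehat{\mathfrak{sl}}_{p_i})$ by Theorem \ref{thm Xiao} (with $u_{S_j}\mapsto E_j$, $-vu^-_{S_j}\mapsto F_j$), and that $U_v(\widehat{\mathfrak{sl}}_{p_i})\simeq U_v(\mathcal{L}\mathfrak{sl}_{p_i})$ by Beck's Theorem \ref{thm Beck's iso}; calling the composite $\Upsilon$, every Drinfeld relation holds automatically for the preimages $\Upsilon^{-1}(x^{\pm}_{j,k})$ and $\Upsilon^{-1}(h_{j,r})$, so the only work is to compute these preimages explicitly. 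That computation is carried out via Lusztig's braid group action: Lemma \ref{lem T_omega} expresses $T_{\omega_j}(E_j)$ and $T_{\omega_j}(F_j)$ as iterated $q$-commutators of Chevalley generators, which, transported through $\Theta_{\lambda_i}$, yields exactly the elements $u^{\pm}_{S^i_j}$, $\eta^{\pm}_{i,j}$ and $\pi^i_{j+1,k}-(v^k+v^{-k})\pi^i_{j,k}+\pi^i_{j-1,k}$ of Theorem \ref{thm main}. No relation is ever checked by hand; the corollary is a transport-of-structure statement.

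The gap in your plan is that you defer precisely the relations that constitute the new content of the corollary beyond Proposition \ref{prop Hubery's result} --- the mixed-sign instances of relations (4) and (6) --- to ``an explicit computation with the skew-Hopf pairing'' followed by ``an induction on $|k|+|l|$'', and you yourself flag this as the technical heart without carrying it out. Moreover the induction as described risks circularity: to reduce $[x^{+}_{[i,j],k},x^{-}_{[i,j],l}]$ by applying $\ad(h_{[i,j],r})$ you need relation (4) for $h$'s and $x$'s of \emph{opposite} signs, but those are themselves mixed-sign relations of comparable difficulty, and the recursive formulas in Theorem \ref{thm main} only define $x^{\pm}_{[i,j],t}$ via the adjoint action of an $h_{[i,j],r}$ whose sign matches that of $t$; you give no mechanism for bootstrapping across the sign barrier. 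Carrying your program to completion would amount to reproving Beck's isomorphism inside the double Hall algebra --- a substantial theorem, not a routine verification. To obtain a complete proof along the paper's lines, replace the direct verification by the transport argument: invoke Theorems \ref{thm Ringel-Green}, \ref{thm Xiao} and \ref{thm Beck's iso}, and then prove only the identification of generators via Lemma \ref{lem T_omega}.
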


The proof will be given in \ref{subsec Hubery's arguments}.

\subsection{Beck's isomorphism for $U_{v}(\widehat{\mathfrak{sl}}_{m})$}
In this subsection we briefly recall Beck's isomorphism in
\cite{Be}.

Let $W$ be the Weyl group of $\mathfrak{sl}_{m}$. The simple reflections
$s_1,\ldots, s_{m-1}$ generates $W$. Let $P$ be the weight
lattice and $Q$ be the root lattice. The fundamental weights are
denoted by $\omega_1,\ldots,\omega_{m-1}$.

The \emph{extended affine Weyl group} is defined to be the
semi-direct product $\widetilde{W}=P\rtimes W$, where $(x,\omega)(x',\omega')=(x+\omega(x'),\omega\omega')$.
And the \emph{affine Weyl group} associated to
$\widehat{\mathfrak{sl}}_{m}$ is the subgroup $\widehat{W}=Q\rtimes
W$. We have the decomposition $\widetilde{W}=\widehat{W}\rtimes
(\mathbb{Z}/{m\mathbb{Z}})$, where the cyclic group
$\mathbb{Z}/{m\mathbb{Z}}$ has a generator $\tau=
(\omega_1,s_1s_2\cdots s_{m-1})$.

Set $s_m:=(\omega_1+\omega_{m-1}, s_1s_2\cdots s_{m-1}\cdots s_2s_1)$.
Then $\{s_1, s_2,\ldots, s_m\}$ is a set of generators of the affine
Weyl group $\widehat{W}$. We can extend the length function on
$\widehat{W}$ to $\widetilde{W}$ by setting $l(\tau)=0$.

Note that the fundamental weights, considered as elements
in $\widetilde{W}$, have the following reduced expressions in
terms of the generators $s_i$ and $\tau$:
$$\omega_i=\tau^i(s_{m-i}\cdots s_{m-1})\cdots(s_2\cdots
s_{i+1})(s_1\cdots s_i).$$

The braid group associated to $\widetilde{W}$ is the group with
generators $T_\omega$ ($\omega\in \widetilde{W}$) the relations
$T_\omega T_\omega'=T_{\omega\omega'}$ if
$l(\omega)+l(\omega')=l(\omega\omega')$. Following Lusztig, it acts
on $U_v(\widehat{sl}_m)$ (see \ref{subsec quantum group}) via the following rules:
\begin{gather*}
T_i(E_i)=-F_iK_i,\ \ T_i(F_i)=-K_i^{-1}E_i,\ \ T_i(K_\alpha)=K_{s_i(\alpha)}, \\
T_i(E_j)=\sum_{r+s=-c_{ij}}(-1)^rv^{-r}E_i^{(s)}E_jE_i^{(r)},\ for\
i\neq j, \\
T_i(F_j)=\sum_{r+s=-c_{ij}}(-1)^rv^rF_i^{(r)}F_jF_i^{(s)},\ for\
i\neq j \\
T_{\tau}(K_i)=K_{i+1},\ \ T_\tau(E_i)=E_{i+1},\ \
T_\tau(F_i)=F_{i+1}.
\end{gather*}
where $E_{i}^{(r)}=E_{i}^r/{[r]!}$,
$F_{i}^{(r)}=F_{i}^r/{[r]!}$ and $(c_{ij})_{1\leq i,j\leq m}$ is the Cartan matrix
associated to $\widehat{sl}_m$.

For $1\leq i\leq m-1$ and $j\in \mathbb{Z}$, let
$$x^-_{i,j}=(-1)^{ij}v^{mj}T^j_{\omega_i}(F_i),\ \ x^+_{i,j}=(-1)^{ij}v^{mj}T^{-j}_{\omega_i}(E_i).$$

For $1\leq i\leq m-1$, $k>0$, define $h_{i,k}$ via the following
generating functions
$$K_i\exp((v-v^{-1})\Sigma_{k>0}h_{i,k}u^k )=
 \Sigma_{l\geq 0}\psi_{i,l}u^{l},$$
where $\psi_{i,l}=(v-v^{-1})[E_i,T^l_{\omega_i}(F_i)]$ for $l>0$ and $\psi_{i,0}=K_i$.

Similarly, define $h_{i,-k}$ via
$$K^{-1}_i\exp((v^{-1}-v)\Sigma_{k>0}h_{i,-k}u^{k}) =
 \Sigma_{l\geq 0}\psi_{i,-l}u^{l}$$
where $\varphi_{i,-l}=(v-v^{-1})[F_i,T^l_{\omega_i}(E_i)]$ for $l>0$ and $\varphi_{i,0}=K^{-1}_i$.

 The following is now well-known, see \cite{Be} for the proof.

\begin{Thm}\label{thm Beck's iso}
$U_v(\widehat{sl}_m)$ is generated by the elements
$x^{\pm}_{i,j},h_{i,k}, K^{\pm1}_i$, where $1\leq i\leq m-1$, $j\in
\mathbb{Z}$ and $k\in \mathbb{Z}\backslash \{0\}$. The defining
relations are Drinfeld relations for
$U_{v}(\mathcal{L}\mathfrak{sl}_{m})$. Thus the Drinfeld-Jimbo
presentation of $U_v(\widehat{sl}_m)$ is isomorphic to the Drinfeld
presentation of $U_{v}(\mathcal{L}\mathfrak{sl}_{m})$.
\end{Thm}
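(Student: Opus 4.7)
The plan is to follow the strategy of Beck, which has three main stages: (i) verify that the assignments $x^{\pm}_{i,j}$, $h_{i,k}$, $K_i^{\pm 1}$ defined via Lusztig's braid action satisfy the Drinfeld relations; (ii) prove surjectivity by recovering the Chevalley generators $E_i, F_i, K_i$ of the Drinfeld-Jimbo presentation from the Drinfeld ones; (iii) conclude injectivity by comparing PBW-type bases on both sides. First I would collect the braid group facts I need: the $T_\omega$ are well-defined algebra automorphisms of $U_v(\widehat{sl}_m)$ since the Lusztig formulas respect the braid relations, and they are compatible with $T_\tau$ via the Dynkin diagram automorphism. In particular the fundamental weights $\omega_i \in \widetilde{W}$ have length $l(\omega_i) = i(m-i)$ in the reduced expression stated in the excerpt, so $T_{\omega_i}$ is unambiguously defined and commutes appropriately with $T_i$.

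Next, I would verify the Drinfeld relations in order of increasing difficulty. Relations (1)--(3) follow immediately: (1) from $T_\omega(K_\alpha) = K_{\omega(\alpha)}$ and the fact that the $\omega_i$ are translations (so $T_{\omega_i}$ fixes the lattice up to a pure translation), (2) from expanding the defining generating functions of $\psi_{i,l}$ and $\varphi_{i,-l}$ as commutators and using $[E_i, F_j] = 0$ for $i \ne j$, and (3) by computing how $T_{\omega_i}$ conjugates $K_j$. Relations (4) and (5) are unpacked by expanding the $\log$ in the generating function for $h_{i,k}$ and applying the $\mathrm{ad}$-computation $[E_i, T^k_{\omega_i} F_j]$; the key identity $T_{\omega_i}(E_j) = $ (explicit formula) reduces these to degree counts inside $U_v(\widehat{sl}_m)$. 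The combinatorial identity $\frac{1}{l}[la_{ij}]$ appearing on the right of (4) is exactly what drops out of the exponential generating function.

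The main obstacle will be relations (6) and (7). For (6), the commutator $[x^+_{i,k}, x^-_{j,l}]$ vanishes for $i \ne j$ by a braid-group symmetry argument (moving $T_{\omega_i}^{-k}$ past $T_{\omega_j}^l$ using the fact that $\omega_i$ and $\omega_j$ commute inside $P \rtimes W$); when $i = j$, one identifies the resulting element with $\psi_{i,k+l}$ or $\varphi_{i,k+l}$ by induction on $|k+l|$ using the generating function definitions. For (7), the quantum Serre-type relations among the $x^{\pm}_{i,k}$, the cleanest route is to reduce to the classical quantum Serre relations between $E_i$ and $E_j$ (the $k = l = 0$ case), then apply $T_{\omega_i}^{-k}$ simultaneously on both sides and use that $\omega_i$ fixes $\alpha_j$ in an appropriate way for $i \ne j$; the symmetrization over $k_1, \ldots, k_n$ appears because after applying the braid action, the resulting expression only becomes Serre-type after symmetrization. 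This is the step where I would expect the most bookkeeping.

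Finally, for surjectivity I would exhibit $E_i, F_i$ (for $i = 1, \ldots, m-1$) as $x^+_{i,0}$, $x^-_{i,0}$, and recover the affine node generators $E_0, F_0$ by an explicit formula of the shape $E_0 = c \cdot [[\cdots[x^-_{1,1}, F_2]\cdots], F_{m-1}]$ with the constant $c$ fixed by comparing weights; this shows the Drinfeld generators (plus $K_i^{\pm 1}$) generate the whole algebra. For injectivity, I would use Beck's PBW basis for the Drinfeld presentation indexed by $\widehat{\Delta}^+ \cup \widehat{\Delta}^-$ together with monomials in the imaginary $h_{i,k}$, and match it with Lusztig's PBW basis on the Drinfeld-Jimbo side coming from a reduced expression of a fixed translation element in $\widetilde{W}$; the fact that both bases have the same cardinality in each weight space, together with the surjectivity established above, forces the map to be an isomorphism.
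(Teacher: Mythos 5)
First, note that the paper does not actually prove Theorem \ref{thm Beck's iso}: it states the result as ``now well-known'' and refers to Beck \cite{Be} for the proof. So there is no in-paper argument to compare yours against; what you have written is essentially an outline of Beck's own strategy (check the Drinfeld relations for the braid-group-defined elements, then surjectivity, then injectivity via a PBW comparison), and at that level your architecture is the standard and correct one.

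That said, as a proof attempt your text has genuine gaps rather than merely omitted routine detail. (a) Every reduction you invoke --- relation (6) for $i\neq j$, the Serre-type relations (7), and the $\mathrm{ad}$-computations for (4) --- silently relies on the key lemma that $T_{\omega_i}$ fixes $E_j$, $F_j$, $K_j$ for $j\neq i$ (reflecting that the translation $\omega_i$ fixes $\alpha_j$ for $j\neq i$, since $\omega_i(\alpha_j)=\alpha_j-\delta_{ij}\delta$). You never state this, and ``moving $T_{\omega_i}^{-k}$ past $T_{\omega_j}^{l}$'' does not by itself reduce $[x^+_{i,k},x^-_{j,l}]$ to $[E_i,F_j]$ without it. (b) The genuinely hard relations are (5) and (6) with $i=j$; in Beck's proof these are reduced to the rank-one case $U_v(\widehat{sl}_2)$ and verified there by explicit computation (this is exactly where the ``vertex subalgebras'' mentioned in the paper's introduction enter). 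Your sketch files (5) under ``degree counts,'' which underestimates it. (c) Your injectivity step as phrased is circular: one cannot invoke ``Beck's PBW basis for the Drinfeld presentation'' before injectivity is known. The correct order is: the Drinfeld relations yield a PBW-type \emph{spanning set} of the abstractly presented algebra; its image under the surjective homomorphism is identified, up to a triangular change of basis, with Lusztig's PBW basis of $U_v(\widehat{sl}_m)$ and is therefore linearly independent; hence the spanning set was a basis and the map is injective. With these three points repaired, your outline becomes Beck's proof.
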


\subsection{The proof of Prop \ref{prop Hubery's result} and Cor \ref{Cor relations in a tube}}
\label{subsec Hubery's arguments}
Proposition \ref{prop Hubery's result} is a result of Hubery
\cite{H}. Corollary \ref{Cor relations in a tube} follows easily
by a similar method.

Now we briefly recall the arguments in \cite{H}. Let $C_{m}$ be the cyclic quiver with $m$ vertices and
consider the Hall algebra $\mathbf{H}(C_{m})$. The composition
algebra $\mathbf{C}(C_{m})$ is the subalgebra of $\mathbf{H}(C_{m})$ generated by $u_{S_{i}}$ for $1\leq i\leq m$.
We know that the composition subalgebra $\mathbf{C}(C_{m})$ is
isomorphic to $U^{+}_{v}(\widehat{\mathfrak{sl}}_{m})$ (Theorem
\ref{thm Ringel-Green}) and the reduced Drinfeld double
$\mathbf{DC}(C_{m})$ is isomorphic to
$U_{v}(\widehat{\mathfrak{sl}}_{m})$ (Theorem \ref{thm Xiao}).
And the isomorphism is given by
$$u_{S_{i}}\mapsto E_{i},\ \ -vu^{-}_{S_{i}}\mapsto F_{i}.$$

We also know that $U_{v}(\widehat{\mathfrak{sl}}_{m})$ is isomorphic
to $U_{v}(\mathcal{L}\mathfrak{sl}_{m})$. Let
$$\Upsilon:\mathbf{DC}(C_{m})\simeq U_{v}(\mathcal{L}\mathfrak{sl}_{m})$$
be the composition of two isomorphisms mentioned above.

Now if we can find the inverse images of the elements
$x^{\pm}_{i,j},h_{i,k}$ under $\Upsilon$, they should certainly
satisfy the Drinfeld relations.

By Theorem \ref{thm Beck's iso} we have
$$x^-_{i,1}=(-1)^iv^mT_{\omega_i}(F_i),\ \
x^+_{i,-1}=(-1)^{-i}v^{-m}T_{\omega_i}(E_i)$$

Recall that $\omega_i=\tau^i(s_{n-i}\cdots s_{n-1})\cdots(s_2\cdots
s_{i+1})(s_1\cdots s_i)$. By induction, we have the following
result:
\begin{Lem}\label{lem T_omega}
For $1\leq i\leq m-1$, we have
$$T_{\omega_i}(F_i)=-K_i[E_m,
E_{m-1},\ldots,E_{i+1},E_1,E_2,\ldots,E_{i-1}]_{v^{-1}},$$
$$T_{\omega_i}(E_i)=-[F_{i-1},\ldots F_2,F_1,F_{i+1},\ldots,
F_{m-1},F_m]_{v}K^{-1}_i,$$
where $[a,b]_v=ab-vba$, and $[a,b,c]_{v}=[[a,b]_{v},c]_{v}$.
\end{Lem}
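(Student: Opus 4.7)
The plan is to compute both sides by substituting the given reduced expression
$$T_{\omega_i} \;=\; T_\tau^{\,i}\circ\bigl(T_{s_{m-i}}T_{s_{m-i+1}}\cdots T_{s_{m-1}}\bigr)\cdots\bigl(T_{s_2}T_{s_3}\cdots T_{s_{i+1}}\bigr)\bigl(T_{s_1}T_{s_2}\cdots T_{s_i}\bigr)$$
and evaluating on $F_i$ (resp.\ $E_i$) from right to left, using only the explicit formulas for $T_{s_j}$ and $T_\tau$ recalled in the previous subsection. Throughout I will use the two basic facts that $T_{s_j}$ acts as the identity on $E_k,F_k$ whenever $|j-k|\ge 2$, and that for $|j-k|=1$ the operator $T_{s_j}$ produces a $v^{-1}$-commutator $T_{s_j}(E_k)=\pm[E_j,E_k]_{v^{-1}}$ (and dually $[F_j,F_k]_{v}$).

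First I apply the innermost block $T_{s_1}T_{s_2}\cdots T_{s_i}$ to $F_i$. Starting from $T_{s_i}(F_i)=-K_i^{-1}E_i$, an induction on the number of successive $T_{s_{i-1}},T_{s_{i-2}},\ldots,T_{s_1}$ applied (using only the adjacency formula above and commuting $K_i^{-1}$ past each $E_j$ with the appropriate power of $v$) shows that the outcome is $\pm K_i^{-1}$ times a nested $v^{-1}$-commutator of $E_{i-1},E_{i-2},\ldots,E_1$ against $E_i$, built in a prescribed left-to-right order. Next I apply each subsequent block $(T_{s_{k}}\cdots T_{s_{k+i-1}})$ in turn. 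Each such block appends exactly one new letter $E_{k+i-1}$ to the existing nested bracket by the same adjacency rule, since all the other $T_{s_j}$'s in the block act trivially on the current expression until the very last one, which creates a fresh $v^{-1}$-commutator. After all the inner blocks have been applied, the result is of the form $\pm K_{?}^{-1}\cdot [E_{a_1},E_{a_2},\ldots,E_{a_{m-2}}]_{v^{-1}}$, where the index sequence $(a_1,a_2,\ldots,a_{m-2})$ is a specific ordering of $\{1,\ldots,m-1\}\setminus\{i\}$ read off from the reduced word.

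Finally I apply $T_\tau^{\,i}$. Since $T_\tau(E_j)=E_{j+1}$ and $T_\tau(K_j)=K_{j+1}$ (indices taken in $\mathbb{Z}/m\mathbb{Z}$), the action of $T_\tau^{\,i}$ just shifts every $E$-index by $i$ modulo $m$ and turns $K_{?}^{-1}$ into $K_i^{-1}$ (after moving $K_i$ to the front using the defining commutation relations). Tracking this cyclic shift on the sequence $(a_1,\ldots,a_{m-2})$ and comparing with the order listed in the lemma produces exactly the nested bracket $[E_m,E_{m-1},\ldots,E_{i+1},E_1,E_2,\ldots,E_{i-1}]_{v^{-1}}$, and the overall sign comes out to $-1$ after accounting for the $-K_i^{-1}E_i$ at the start and one further sign coming from collecting $K_i^{-1}$ against the resulting monomials (the net $K$-factor being precisely $K_i$ because the bracket has weight $\delta-\alpha_i$). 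This establishes the first equality, and the second follows by applying the Cartan antiautomorphism $\Omega:E_j\leftrightarrow F_j,\ K_\alpha\mapsto K_{-\alpha},\ v\leftrightarrow v^{-1}$, which commutes with $T_{\omega_i}$ and swaps $v^{-1}$-brackets with $v$-brackets while reversing the order.

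The main obstacle will be the combinatorial bookkeeping: verifying that the inner blocks append indices in the correct order so that the final cyclic shift by $T_\tau^{\,i}$ yields exactly the sequence $m,m-1,\ldots,i+1,1,2,\ldots,i-1$ and not some other permutation. A secondary obstacle is tracking the scalar $v$-powers and signs that arise each time $K_i^{-1}$ is commuted past an $E_j$ (or when a $T_{s_j}$ creates a new $v^{-1}$-commutator); these cancel in a clean way precisely because the reduced expression for $\omega_i$ is minimal, but this needs to be verified step by step in the induction.
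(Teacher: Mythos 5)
Your plan is essentially the paper's own argument: the paper proves this lemma simply by unwinding the reduced expression $\omega_i=\tau^i(s_{m-i}\cdots s_{m-1})\cdots(s_1\cdots s_i)$ and inducting on the successive applications of the $T_{s_j}$ and $T_\tau$, exactly as you propose, with the same bookkeeping of $v^{-1}$-commutators, $K$-factors (using $K_\delta=1$) and signs. Your shortcut of deducing the second identity from the first via the antilinear antiautomorphism $E_j\leftrightarrow F_j$, $K_\alpha\mapsto K_{-\alpha}$, $v\mapsto v^{-1}$ (rather than a second parallel induction) is legitimate, since that map does intertwine the $T_{s_j}$ and $T_\tau$ in the paper's normalization; just note that it turns the left-nested $v^{-1}$-bracket into a right-nested $v$-bracket with reversed letters, and the re-association to the left-nested form stated in the lemma uses that non-adjacent generators commute.
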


We identify $E_i$ (resp. $F_i$) with $u^+_{s_i}$ (resp.
$-vu^-_{s_i}$). Further calculations yield
$$T_{\omega_i}(F_i)=-v^{-m+i+1}K_i[u^+_{s_{m}(m-i)},
u^+_{s_1},u^+_{s_2}\ldots,u^+_{s_{i-1}}]_{v^{-1}},$$
$$T_{\omega_i}(F_i)K^{-1}_i=(-1)^iv^{1-m-i}\sum_{M\in\mathcal{M}_{i+1,\delta-e_j}}(1-v^2)^{dim
End(M)-1}u^+_M.$$

Therefore, we have
\begin{align*}
x^+_{i,0}&=u^+_{S_i},\\
x^-_{i,1}&=v^{1-i}\sum_{M\in\mathcal{M}_{i+1,\delta-e_i}}(1-v^2)^{dim
End(M)-1}u^+_MK_i.
\end{align*}

In a similar way, we have
\begin{align*}
x^-_{i,0}&=u^-_{S_i},\\
x^+_{i,-1}&=-v^{-1}v^{1-i}\sum_{M\in\mathcal{M}_{i+1,\delta-e_i}}(1-v^2)^{dim
End(M)-1}u^-_MK^-_i.
\end{align*}

The inverse image of the elements $h_{i,k}$ in the Hall algebra can be
found by induction using the fact that $\pi_{n,r}$ (see \ref{subsec
more about non-homogeneous tube}) is central and primitive in the Hall
algebra. And the result is:
$$h_{i,k}=\pi_{i+1,k}-(v^k+v^{-k})\pi_{i,k}+\pi_{i-1,k},\ \ for\ k>0.$$

Again by a similar method we have
$$h_{i,-k}=-(\pi^-_{i+1,k}-(v^k+v^{-k})\pi^-_{i,k}+\pi^-_{i-1,k}),\ \ for\ k>0.$$

Finally, the elements $x^{+}_{i,j}$ ($j\neq 0,-1$) and
$x^{-}_{i,j}$ ($j\neq 0,1$) are determined by the relation \ref{subsec
quantum loop alg} (4).

\section{Relations in $\mathbf{H}(\Coh(\mathbb{X}))$}\label{sec relation of positive part}
In this section we focus on the elements $x^{+}_{[i,j],k}$, $x^{-}_{[i,j],k+1}$
$h_{[i,j],l}$, $x^{+}_{\ast,r}$, and $h_{\ast,t}$ where $1\leq i\leq
n$, $1\leq j\leq p_{i}-1$, $k\geq 0$, $l,t\in\mathbb{N}$ and
$r\in\mathbb{Z}$. These are the elements belonging to the positive Hall algebra $\mathbf{H}(\Coh(\mathbb{X}))$.

\subsection{The known relations}\label{subsec known relations}
First, for reader's convenience, we list the relations which has already been
proved in \cite{S}:

(a). For $1\leq i\leq n$, $r, r_1\in\mathbb{N}$ and $k\in\mathbb{Z}$,
$$[T_{*,r},
u_{\mathscr{O}(k\vec{c})}]=\frac{[2r]}{r}u_{\mathscr{O}((k+r)\vec{c})},$$
$$[T_{*,r},x^+_{[i,1],r_1}]=-\frac{[r]}{r}x^+_{[i,1],r_1+r}.$$

(b). For $t_1, t_2\in\mathbb{Z}$,
\begin{equation*}
u_{\mathscr{O}((t_1+1)\vec{c})}u_{\mathscr{O}(t_2\vec{c})}
-v^2u_{\mathscr{O}(t_2\vec{c})}u_{\mathscr{O}((t_1+1)\vec{c})}=
v^{2}u_{\mathscr{O}(t_1\vec{c})}u_{\mathscr{O}((t_2+1)\vec{c})}-
u_{\mathscr{O}((t_2+1)\vec{c})}u_{\mathscr{O}(t_1\vec{c})}.
\end{equation*}

(c). For $1\leq i\leq n$, $r, r_1, r_2\in\mathbb{N}$ and $t, t_1, t_2\in\mathbb{Z}$,
\begin{equation*}
\Sym_{r_1,r_2}\{x^+_{[i,1],r_1}x^+_{[i,1],r_2}u_{\mathscr{O}(t\vec{c})}
-[2]x^+_{[i,1],r_1}u_{\mathscr{O}(t\vec{c})}x^+_{[i,1],r_2}
+u_{\mathscr{O}(t\vec{c})}
x^+_{[i,1],r_1}x^+_{[i,1],r_2}\}=0,
\end{equation*}
\begin{equation*}
\Sym_{t_1,t_2}\{u_{\mathscr{O}(t_1\vec{c})}u_{\mathscr{O}(t_2\vec{c})}x^+_{[i,1],r}
-[2]u_{\mathscr{O}(t_1\vec{c})}x^+_{[i,1],r}u_{\mathscr{O}(t_2\vec{c})}
+x^+_{[i,1],r}u_{\mathscr{O}(t_1\vec{c})}u_{\mathscr{O}(t_2\vec{c})}\}
=0.
\end{equation*}

(d). For $1\leq i\leq n$, $r, r_1, r_2\in\mathbb{N}$ and $t\in\mathbb{Z}$,
$$u_{\mathscr{O}((t+1)\vec{c})}x^+_{[i,1],r}-v^{-1}x^+_{[i,1],r}u_{\mathscr{O}((t+1)\vec{c})}
=v^{-1}u_{\mathscr{O}(t\vec{c})}x^+_{[i,1],r+1}-x^+_{[i,1],r+1}u_{\mathscr{O}(t\vec{c})},$$
$$x^+_{[i,1],r_1+1}x^+_{[i,1],r_2}-v^2x^+_{[i,1],r_2}x^+_{[i,1],r_1+1}=v^2x^+_{[i,1],r_2+1}x^+_{[i,1],r_1}-x^+_{[i,1],r_1}x^+_{[i,1],r_2+1}.$$

For any two elements arising from different tubes, we know that they commute with each other as
there are no non-trivial extensions between torsion sheaves belonging to different tubes.

Moreover, from the last section we know that the elements
in each $\mathbf{H}(\mathscr{T}_{\lambda_{i}})$ ($1\leq i\leq n$) satisfy
the required relations.

\subsection{The remaining relations}
\begin{Lem}
(1). $[h_{[i,j],k},h_{[i,l],m}]=0$, for any $1\leq i\leq n,1\leq j,l\leq p_i-1,k,m\in\mathbb{N}$.

(2). $[h_{[i,j],l},h_{\ast,k}]=0$, for any $1\leq i\leq n,1\leq j\leq p_i-1,l,k\in\mathbb{N}$.
\end{Lem}

\begin{proof}
We deduce $[\pi^i_{j,k},\pi^i_{l,m}]=0$ by the embedding of algebras
$$\mathbf{H}_v(C_1)\hookrightarrow
\mathbf{H}_v(C_2)\cdots\hookrightarrow\mathbf{H}_v(C_{p_{i}})$$ and
the fact that $\pi^i_{j,k}$ is in the center of $\mathbf{H}_v(C_j)$
(see the remark in \ref{subsec more about non-homogeneous tube}).
The first equation follows.

For the second one, we have
$[h_{[i,j],l},h_{*,k}]=[h_{[i,j],l},\pi^i_{1,k}]=0$.
\end{proof}

\begin{Lem}\label{lem hsi}
$[h_{*,l},x^+_{[i,j],k}]=0$, for any $1\leq i\leq n, 2\leq j\leq p_{i}-1$, $l,k\in\mathbb{N}$.
\end{Lem}

\begin{proof}
First we prove the case $k=0$, we know that
$x^+_{[i,j],0}=u_{S^i_j}$. Assume that $u_{\mathscr{G}}$ is a term with non-zero coefficient in the expression of
$h_{*,l}=T_{l}$, where $\mathscr{G}$ is a torsion sheaf. We only need to consider the direct summand $\mathscr{F}$ of $\mathscr{G}$
belonging to $\mathscr{T}_{\lambda_{i}}$. We have
$[\mathscr{F}]=l\delta$ and $\Top(\mathscr{F})= S^i_0\oplus S^i_0
\cdots \oplus S^i_0$, $\soc(\mathscr{F})=S^i_1\oplus
S^i_1\cdots\oplus S^i_1$. It is clear that
$[u_{\mathscr{F}},u_{S^i_j}]=0$. Thus we get
$[h_{*,l},x^+_{[i,j],0}]=0$.

For the general case, one just need to apply $\ad(h_{[i,j],k})$ to
the above formula.
\end{proof}

\begin{Lem}\label{lem hsi--}
$[h_{*,l},x^-_{[i,1],k}]=\frac{[l]}{l}x^-_{[i,1],1+k}$, for any
$l\in\mathbb{N}$,$1\leq i\leq n$ and $k\geq 1$.
\end{Lem}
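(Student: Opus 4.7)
The approach is to reduce the commutator to a computation inside the tube $\mathscr{T}_{\lambda_i}$, treat the base case $k=1$ by inverting the tridiagonal identities that relate the $h_{[i,j],l}$ to the $\pi^i_{j,l}$, and handle all $k\geq 1$ by induction via the Jacobi identity. (Drinfeld relation (4) with $a_{\ast,[i,1]}=-1$ gives the target formula $[h_{\ast,l},x^-_{[i,1],k}]=\tfrac{[l]}{l}\,x^-_{[i,1],k+l}$, which is how I read the statement.)

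First I write $h_{\ast,l}=T_l=\sum_{x\in\mathbb{X}}\mathbf{h}_{l,x}$. Each $x^-_{[i,1],k}$ for $k\geq 1$ belongs to $\mathbf{H}(\mathscr{T}_{\lambda_i})\cdot\mathbf{T}$: for $k=1$ this is the definition of $\eta^+_{i,1}$, and for $k\geq 2$ it follows from the recursion $x^-_{[i,1],m+1}=\tfrac{-m}{[2m]}[h_{[i,1],m},x^-_{[i,1],1}]$ together with $h_{[i,1],m}\in\mathbf{H}(\mathscr{T}_{\lambda_i})$. Since there are no non-trivial extensions between torsion sheaves supported at distinct closed points (relation (e) in \S\ref{subsec known relations}), the summands $\mathbf{h}_{l,x}$ for $x\neq\lambda_i$ commute with $x^-_{[i,1],k}$; and since $[T_l]=l\delta$ in $K_0(\Coh(\mathbb{X}))\cong\widehat{Q}$ while $(\mu,\delta)=0$ for every $\mu\in\widehat{Q}$, $T_l$ also commutes with every $K_\mu$. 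Hence $[h_{\ast,l},x^-_{[i,1],k}]=[\mathbf{h}_{l,\lambda_i},x^-_{[i,1],k}]$. Transferring through $\Theta_{\lambda_i}$ and using the identification $\Psi(\mathbf{h}_l)=\pi_{1,l}$ inside $\mathbf{H}(C_{p_i})$ (already exploited in the proof of the preceding lemma), it suffices to show
\[
[\pi^i_{1,l},\,x^-_{[i,1],k}]=\tfrac{[l]}{l}\,x^-_{[i,1],k+l}\qquad(k\geq 1).
\]

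For the base case $k=1$, write $[\pi^i_{j,l},x^-_{[i,1],1}]=f_j(l)\,x^-_{[i,1],1+l}$, where $f_0(l)=0$ and $f_{p_i}(l)=0$ by centrality of $\pi^i_{p_i,l}$ (Remark \ref{rem center of HCm}). Applying $[-,x^-_{[i,1],1}]$ to each tridiagonal relation $h_{[i,j],l}=\pi^i_{j+1,l}-(v^l+v^{-l})\pi^i_{j,l}+\pi^i_{j-1,l}$ and invoking the Drinfeld commutators $[h_{[i,j],l},x^-_{[i,1],1}]=-\tfrac{1}{l}[l\,a_{[i,j],[i,1]}]\,x^-_{[i,1],1+l}$ from Corollary \ref{Cor relations in a tube} produces the linear system
\begin{align*}
f_2-(v^l+v^{-l})f_1 &= -\tfrac{[2l]}{l},\\
f_3-(v^l+v^{-l})f_2+f_1 &= \tfrac{[l]}{l},\\
f_{j+1}-(v^l+v^{-l})f_j+f_{j-1} &= 0\qquad(3\le j\le p_i-1),
\end{align*}
whose unique solution, using the identity $(v^l+v^{-l})[l]=[2l]$ and the boundary $f_{p_i}(l)=0$, is $f_1(l)=[l]/l$ and $f_j(l)=0$ for $j\geq 2$.

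For the inductive step, applying the Jacobi identity to $x^-_{[i,1],k+1}=\tfrac{-k}{[2k]}[h_{[i,1],k},x^-_{[i,1],1}]$, using $[h_{\ast,l},h_{[i,1],k}]=0$ from the preceding lemma, and using the Drinfeld commutator $[h_{[i,1],k},x^-_{[i,1],1+l}]=-\tfrac{[2k]}{k}x^-_{[i,1],k+l+1}$, I obtain
\[
[h_{\ast,l},x^-_{[i,1],k+1}]=\tfrac{-k}{[2k]}\cdot\tfrac{[l]}{l}\cdot\bigl(-\tfrac{[2k]}{k}\bigr)\,x^-_{[i,1],k+l+1}=\tfrac{[l]}{l}\,x^-_{[i,1],(k+1)+l}.
\]
The substantive part is the base case: pinning down the precise scalar $[l]/l$ requires the coefficient bookkeeping of the tridiagonal inversion above, which in turn hinges on the centrality of $\pi^i_{p_i,l}$ and on all of Hubery's Drinfeld commutators within the tube being available. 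The remainder of the argument is then formal.
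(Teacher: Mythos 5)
Your proof is correct, and its overall architecture --- reduce to the tube $\mathscr{T}_{\lambda_i}$, settle the case $k=1$, then propagate to all $k$ via the recursion $x^-_{[i,1],k+1}=\tfrac{-k}{[2k]}[h_{[i,1],k},x^-_{[i,1],1}]$ together with $[h_{\ast,l},h_{[i,1],k}]=0$ and the Jacobi identity --- is exactly the paper's. The one place you genuinely diverge is the base case. The paper identifies $\mathbf{h}_{l,\lambda_i}$ with $\pi^i_{1,l}$, solves the single relation $h_{[i,1],l}=\pi^i_{2,l}-(v^l+v^{-l})\pi^i_{1,l}$ for $\pi^i_{1,l}$, and disposes of the $\pi^i_{2,l}$ term by a direct Hall-algebra verification that $[\pi^i_{2,l},x^-_{[i,1],1}]=0$ (stated as ``easy to see'', resting on the explicit form $x^-_{[i,1],1}=u_{S^i_0(p_i-1)}K_{[i,1]}$ and the socle constraint on the modules occurring in $\pi^i_{2,l}$). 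You instead invert the entire tridiagonal system relating the $h_{[i,j],l}$ to the $\pi^i_{j,l}$, using the centrality of $\pi^i_{p_i,l}$ as boundary condition and the Drinfeld commutators already established inside the tube (Corollary \ref{Cor relations in a tube}) as right-hand sides; the nonsingularity of the tridiagonal matrix (the same matrix $A$ the paper exploits later when proving $[\pi^{+i}_{l_1,k_1},\pi^{-i}_{l_2,k_2}]=0$) then forces $f_1=[l]/l$ and $f_j=0$ for $j\geq 2$. Your route costs a little more bookkeeping but yields the vanishing $[\pi^i_{j,l},x^-_{[i,1],1}]=0$ for all $j\geq 2$ as a by-product and avoids any direct computation with Hall numbers, whereas the paper's route is shorter but leans on an unproved ``easy to see''. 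Your preliminary reduction of $[h_{\ast,l},-]$ to $[\mathbf{h}_{l,\lambda_i},-]$ (different tubes commute, and $(\mu,\delta)=0$ handles the torus factor) is the same step the paper takes implicitly. One cosmetic point: as you observed, the exponent $1+k$ in the statement should read $l+k$, which is what both you and the paper actually prove.
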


\begin{proof}
Again we first consider the simplest case, namely the case $k=1$.

We know that $x^-_{[i,1],1}=u_{S^i_0(p_i-1)}K_{[i,1]}$. It is not difficult to see
that $[\pi^i_{2,k}, x^-_{[i,1],1}]=0$. Hence we have
$$[h_{*,l},x^-_{[i,1],1}]=[\mathbf{h}_{l,\lambda_{i}},x^-_{[i,1],1}]
=\frac{1}{-(v^l+v^{-l})}[h_{[i,1],l},x^-_{[i,1],1}]
=\frac{[l]}{l}x^-_{[i,1],l+1}.$$

For $k>1$, we have
\begin{equation*}
\begin{split}
[h_{*,l},x^-_{[i,1],k}]&=\frac{-(k-1)}{[2(k-1)]}[h_{*,l},[h_{[i,1],k-1},x^-_{[i,1],1}]\\
&=\frac{-(k-1)}{[2(k-1)]}[h_{[i,1],k-1},[h_{*,l},x^-_{[i,1],1}]\\
&=\frac{-(k-1)}{[2(k-1)]}\frac{[l]}{l}[h_{[i,1],k-1},x^-_{[i,1],l+1}]
=\frac{[l]}{l}x^-_{[i,1],k+l}.
\end{split}
\end{equation*}
\end{proof}

\begin{Lem}\label{lem h_star x- jgeq2}
$[h_{*,l},x^-_{[i,j],k}]=0$, for any $1\leq i\leq n, 2\leq j\leq p_i-1$, $l\in\mathbb{N}$ and
$k\geq 1$.
\end{Lem}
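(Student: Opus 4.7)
The plan is to mirror exactly the two-step strategy used in Lemmas \ref{lem hsi} and \ref{lem hsi--}: first settle the base case $k=1$ by a direct computation inside the tube $\mathcal{T}_{\lambda_i}$, then lift to arbitrary $k\ges 1$ by induction via the defining relation for $x^-_{[i,j],k+1}$ and the Jacobi identity.

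For the base case $k=1$, recall that $x^-_{[i,j],1}=\eta^+_{i,j}$ lies, up to the scalar $K$-factor, in $\mathbf{H}(\mathscr{T}_{\lambda_i})\simeq \mathbf{H}(C_{p_i})$. Since objects from distinct tubes have no non-trivial extensions (relation (e) in \ref{subsec known relations}), only the summand $\mathbf{h}_{l,\lambda_i}$ of $h_{\ast,l}=T_l$ can interact non-trivially with $x^-_{[i,j],1}$; all other summands commute for degree-of-support reasons. The problem thus reduces to verifying
\[
[\mathbf{h}_{l,\lambda_i},\eta^+_{i,j}]=0\qquad\text{in }\mathbf{H}(C_{p_i}).
\]
By construction $\mathbf{h}_{l,\lambda_i}=\Theta_{\lambda_i}\circ\Psi(\mathbf{h}_l)$, and under the embedding $\Psi:\mathbf{H}(C_1)\hookrightarrow \mathbf{H}(C_{p_i})$ this element sits in the subalgebra generated by the sheaves supported on the first strand; combined with the primitivity/centrality properties of the $\pi^i_{j',l}$ recalled in Remark \ref{rem center of HCm}, one can express $\mathbf{h}_{l,\lambda_i}$ in terms of $\pi^i_{j',l}$ with $j'\ges j+1$, each of which commutes with $\eta^+_{i,j}$ because the modules $M$ appearing in $\eta^+_{i,j}$ satisfy $\soc(M)\subseteq S^i_1\oplus\cdots\oplus S^i_{j+1}$.

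For the inductive step, I use the recursive definition from Theorem \ref{thm main}:
\[
x^-_{[i,j],k+1}=\frac{-k}{[2k]}[h_{[i,j],k},x^-_{[i,j],1}],\qquad k\ges 1.
\]
Applying Jacobi to $h_{\ast,l}$ against this bracket gives
\[
[h_{\ast,l},x^-_{[i,j],k+1}]=\frac{-k}{[2k]}\bigl([[h_{\ast,l},h_{[i,j],k}],x^-_{[i,j],1}]+[h_{[i,j],k},[h_{\ast,l},x^-_{[i,j],1}]]\bigr).
\]
The first term vanishes by part (2) of the first lemma of this subsection (which asserts $[h_{[i,j],k},h_{\ast,l}]=0$), and the second term vanishes by the base case. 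This completes the induction.

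The main obstacle is the base case, and within it the only non-formal ingredient is the identity $[\pi^i_{j',l},\eta^+_{i,j}]=0$ for $j'\ges j+1$. This is a statement purely about the Hall algebra of a cyclic quiver $C_{p_i}$, and it should be extractable directly from Hubery's computations in \cite{H} by tracking how $\eta^+_{i,j}$ is supported in the filtration of $\mathbf{H}(C_{p_i})$ by the subcategories $\mathcal{M}_{l,\alpha}$; everything else is either the Jacobi identity or relations already established earlier in this section.
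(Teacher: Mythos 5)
Your overall architecture coincides with the paper's: reduce to the tube $\mathscr{T}_{\lambda_i}$ via relation (e) of \ref{subsec known relations}, settle the case $k=1$ inside $\mathbf{H}(\mathscr{T}_{\lambda_i})\simeq\mathbf{H}(C_{p_i})$, and lift to all $k\geq 1$ by a single application of $\ad(h_{[i,j],k})$ together with $[h_{*,l},h_{[i,j],k}]=0$. The reduction and the inductive step are correct and are exactly what the paper does.

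The gap is in the base case. Your key claim is that $\mathbf{h}_{l,\lambda_i}$ ``can be expressed in terms of $\pi^i_{j',l}$ with $j'\geq j+1$'', each of which commutes with $\eta^+_{i,j}$. But $\mathbf{h}_{l,\lambda_i}$ is precisely $\pi^i_{1,l}$ (this identification is used repeatedly in section 6, e.g.\ in the proofs of the lemma preceding Lemma \ref{lem hsi} and of Lemma \ref{lem hsi--}); it is supported on modules whose socle is a sum of copies of $S^i_1$ alone and is not a linear combination of the $\pi^i_{j',l}$ with $j'\geq j+1$. The only relations linking the $\pi$'s are $h_{[i,s],l}=\pi^i_{s+1,l}-(v^l+v^{-l})\pi^i_{s,l}+\pi^i_{s-1,l}$, so any passage from $\pi^i_{1,l}$ to higher $\pi$'s necessarily introduces $h_{[i,1],l},\dots,h_{[i,j],l}$, and the brackets $[h_{[i,s],l},x^-_{[i,j],1}]=-\frac{1}{l}[la_{sj}]x^-_{[i,j],1+l}$ do \emph{not} vanish for $s\in\{j-1,j,j+1\}$; one must verify that these contributions cancel (they do, using $[m+l]+[m-l]=(v^l+v^{-l})[m]$ against the coefficients $-[l(p_i-s)]/[lp_i]$, but you have not carried this out). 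A litmus test: your argument read verbatim with $j=1$ would yield $[h_{*,l},x^-_{[i,1],1}]=0$, contradicting Lemma \ref{lem hsi--}, which gives $\frac{[l]}{l}x^-_{[i,1],2}$; so the mechanism as stated cannot be what makes $j\geq 2$ work. (Your secondary claim, that $[\pi^i_{j',l},\eta^+_{i,j}]=0$ for $j'\geq j+1$, is in fact correct --- the modules in $\mathcal{M}_{j+1,\delta-e_j}$ lie in the image of $\mathbf{H}(C_{j'})\hookrightarrow\mathbf{H}(C_{p_i})$ where $\pi_{j',l}$ is central --- but it is not the missing piece.) The paper sidesteps all of this bookkeeping: for $k=1$ it repeats the direct computation of Lemma \ref{lem hsi}, checking $[u_{\mathscr{F}},u_M]=0$ term by term for the sheaves $\mathscr{F}$ occurring in $T_l$ (top a sum of copies of $S^i_0$, socle a sum of copies of $S^i_1$) against the modules $M$ occurring in $\eta^+_{i,j}$, via vanishing of the relevant Hom and Ext spaces. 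Either supply that direct computation, or make the cancellation of the $h_{[i,s],l}$-contributions explicit.
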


\begin{proof}
The case $k=1$ can be proved in the same way as Lemma \ref{lem hsi}.
For $k>1$ we just apply $\ad(h_{[i,j],k})$.
\end{proof}

\begin{Lem}\label{lem hsi-}
$[x^+_{*,k},x^-_{[i,j],1}]=0$, for any $k\in\mathbb{Z}$,$1\leq i\leq n,1\leq j\leq p_i-1$.
\end{Lem}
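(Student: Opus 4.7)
The key observation is that $x^-_{[i,j],1}=\eta^+_{i,j}$ lies entirely in the positive extended Hall algebra $\mchp$: by the definition in \ref{subsec main theorem} it is a linear combination of $u^+_M K_{[i,j]}$ with $M$ a torsion sheaf in $\mathscr{T}_{\lambda_i}$. Thus $[x^+_{*,k},x^-_{[i,j],1}]$ is computed entirely inside $\mchp$ and no Drinfeld double manipulation is needed. The plan is to reduce the general $k$ to the base case $k=0$ by induction and the Jacobi identity, using the earlier relations of this section.

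\emph{Inductive step for $k\geq 1$.} The relation $u^+_{\mathscr{O}(k\vec{c})}=\tfrac{1}{[2]}[h_{*,1},u^+_{\mathscr{O}((k-1)\vec{c})}]$ from \ref{subsec known relations}(a) together with Jacobi yields
\[
[x^+_{*,k},x^-_{[i,j],1}]=\tfrac{1}{[2]}\bigl([h_{*,1},A_{k-1}]-[x^+_{*,k-1},[h_{*,1},x^-_{[i,j],1}]]\bigr),
\]
where $A_{k-1}:=[x^+_{*,k-1},x^-_{[i,j],1}]$. For $j\geq 2$, Lemma \ref{lem h_star x- jgeq2} gives $[h_{*,1},x^-_{[i,j],1}]=0$ and the induction closes at once. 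For $j=1$, Lemma \ref{lem hsi--} gives $[h_{*,1},x^-_{[i,1],1}]=x^-_{[i,1],2}$; combining $x^-_{[i,1],2}=-\tfrac{1}{[2]}[h_{[i,1],1},x^-_{[i,1],1}]$ (Corollary \ref{Cor relations in a tube}) with $[h_{[i,1],1},x^+_{*,k-1}]=-x^+_{*,k}$ (Drinfeld relation (4) with $a_{[i,1],*}=-1$) via a second Jacobi expansion produces the recursion
\[
\bigl(1-\tfrac{1}{[2]^2}\bigr)A_k=\tfrac{1}{[2]}[h_{*,1},A_{k-1}]+\tfrac{1}{[2]^2}[h_{[i,1],1},A_{k-1}],
\]
and since $[2]^2-1=v^2+1+v^{-2}\ne 0$, the hypothesis $A_{k-1}=0$ forces $A_k=0$. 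The case $k\leq -1$ is handled symmetrically: establish $k=-1$ by a computation parallel to the base case and induct downward using the line-bundle commutation relation \ref{subsec known relations}(b).

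\emph{Base case $k=0$ (and $k=-1$).} Expand $\eta^+_{i,j}=c\sum_M d_M\,u^+_M K_{[i,j]}$ over $M\in\mathscr{T}_{\lambda_i}$ with $\dimv M=\delta-[S^i_j]$, and compute $[u^+_{\mathscr{O}(k\vec{c})},u^+_M K_{[i,j]}]$ termwise. By Lemma \ref{lem free and torsion}, $\Ext^1(\mathscr{O}(k\vec{c}),M)=0$, so $u^+_{\mathscr{O}(k\vec{c})}u^+_M$ collapses to a single split term $v^{\langle\mathscr{O}(k\vec{c}),M\rangle}u^+_{\mathscr{O}(k\vec{c})\oplus M}$. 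The reverse product $u^+_M u^+_{\mathscr{O}(k\vec{c})}$ is indexed by $\Ext^1(M,\mathscr{O}(k\vec{c}))$; its middle terms are either $\mathscr{O}(k\vec{c})\oplus M$ or indecomposable line bundles obtained from the exact sequences $0\to\mo((j-1)\vec{x}_i)\to\mo(j\vec{x}_i)\to S^i_j\to 0$ of \ref{subsec indecomposable torsion sheaf}. After commuting $K_{[i,j]}$ past $u^+_{\mathscr{O}(k\vec{c})}$ (producing the scalar $v^{([S^i_j],[\mathscr{O}(k\vec{c})])}$ via Lemma \ref{lem comute Euler form}) and summing over $M$, the contributions from non-split extensions cancel precisely because the coefficients $d_M$ are those produced by Hubery's encoding of Beck's braid-group construction in Section \ref{sec relations in a tube}; the cancellation reflects the $\mathfrak{sl}_2$-type Serre relation between $u^+_{\mathscr{O}(k\vec{c})}$ and the Beck-transformed generator in the tube Hall algebra.

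The main obstacle is the base case: matching the Hall-number enumeration of non-split extensions $0\to\mathscr{O}(k\vec{c})\to N\to M\to 0$ against the specific Hubery coefficients $d_M$ appearing in $\eta^+_{i,j}$. Once this is established, the inductive machinery above covers every other $k\in\mbz$.
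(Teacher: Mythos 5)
There is a genuine gap: the entire substance of the lemma is deferred to your ``base case,'' which you explicitly leave open (``the main obstacle is the base case\dots''). The claimed cancellation of the non-split extension terms when $\eta^{+}_{i,j}$ is expanded as a sum over $\mathcal{M}_{j+1,\delta-e_j}$ is only asserted, via an appeal to ``Hubery's encoding of Beck's braid-group construction'' and an unspecified $\mathfrak{sl}_2$-type Serre relation; that is a heuristic, not an argument. The paper avoids this combinatorial cancellation entirely by using the iterated $v$-commutator form from \ref{subsec Hubery's arguments}, namely $x^-_{[i,1],1}=u_{S^i_0(p_i-1)}K_{[i,1]}$ and, for $j\geq 2$, $x^-_{[i,j],1}=(-v)^{j+1}K_{[i,j]}[u_{S^i_0(p_{i}-j)},u_{S^i_1},\ldots,u_{S^i_{j-1}}]_{v^{-1}}$. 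Since $u_{S^i_l}$ for $l\geq 2$ visibly commutes with $u_{\mathscr{O}(k\vec{c})}$, everything reduces to three one-line Hall-product identities involving $\mathscr{O}(k\vec{c})$, $S^i_0(p_i-j)$ and $S^i_1$ (e.g.\ $x^+_{*,k}u_{S^i_0(p_i-1)}=vu_{\mathscr{O}(k\vec{c})\oplus S^i_0(p_i-1)}$ versus $u_{S^i_0(p_i-1)}x^+_{*,k}=v^2u_{\mathscr{O}(k\vec{c})\oplus S^i_0(p_i-1)}$), and these computations are uniform in $k$. Consequently no induction on $k$ is needed: your elaborate reduction machinery handles only the part that is already free, and omits the part that requires work.

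Two further points on the reduction itself. First, your inductive step for $j=1$ invokes $[h_{[i,1],1},x^+_{*,k-1}]=-x^+_{*,k}$ as ``Drinfeld relation (4)''; but this is exactly Lemma \ref{lem h_i1 x^+_ast}, which is one of the relations being established in this section (it appears after the present lemma in the paper; its proof happens not to use this lemma, so the argument could be reordered, but you cannot treat it as already known). Second, the proposed downward induction for $k\leq -1$ ``using \ref{subsec known relations}(b)'' does not obviously close: applying $\mathrm{ad}(x^-_{[i,j],1})$ to relation (b) and killing the terms covered by the inductive hypothesis leaves $v^{2}[x^+_{*,t_1},x^-_{[i,j],1}]x^+_{*,t_2+1}-x^+_{*,t_2+1}[x^+_{*,t_1},x^-_{[i,j],1}]=0$, which does not by itself force the commutator to vanish. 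Both issues would disappear if you simply ran the direct computation for arbitrary $k$, as the paper does.
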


\begin{proof}
We first consider the case $j=1$. Note that
$x^-_{[i,1],1}=u_{S^i_0(p_i-1)}K_{[i,1]}$, so we have
\begin{equation*}
\begin{split}
[x^+_{*,k},u_{S^i_0(p_i-1)}K_{[i,1]}]&=
x^+_{*,k}u_{S^i_0(p_i-1)}K_{[i,1]}-u_{S^i_0(p_i-1)}K_{[i,1]}x^+_{*,r}\\
&=(x^+_{*,k}u_{S^i_0(p_i-1)}-v^{-1}u_{S^i_0(p_i-1)}x^+_{*,r})K_{[i,1]}.
\end{split}
\end{equation*}

We know that
$$\Hom(\mathscr{O}(k\vec{c}),S^i_0(p_i-1))=k,$$
$$\Ext^{1}(\mathscr{O}(k\vec{c}),S^i_0(p_i-1))=\Hom(S^i_0(p_i-1),\mathscr{O}(k\vec{c}))=0.$$

Thus we have
$$x^+_{*,k}u_{S^i_0(p_i-1)}=vu_{\mathscr{O}(k\vec{c})\oplus S^i_0(p_i-1)},\ \
u_{S^i_0(p_i-1)}x^+_{*,k}=v^2u_{\mathscr{O}(k\vec{c})\oplus
S^i_0(p_i-1)}.$$

Then it follows that $[x^+_{*,k},u_{S^i_0(p_i-1)}K_{[i,1]}]=0$.

Now assume $j\geq 2$, in this case we know that
$$x^-_{[i,j],1}=(-v)^{j+1}K_{[i,j]}[u_{S^i_0(p_{i}-j)},u_{S^i_1},\ldots,
u_{S^i_{j-1}} ]_{v^{-1}}.$$

It suffices to prove
$[x^+_{*,k},[u_{S^i_0(p_i-j)},u_{S^i_1}]_{v^{-1}}]=0$, which can be
deduced using the following identities:
$$u_{S^i_0(p_i-j)}u_{\mathscr{O}(k\vec{c})}=vu_{\mathscr{O}(k\vec{c})}u_{S^i_0(p_i-j)},$$
$$u_{S^i_1}u_{\mathscr{O}(k\vec{c})}=v^{-1}(u_{\mathscr{O}(k\vec{c})}u_{S^i_1}
 +u_{\mathscr{O}(k\vec{c}+\vec{x}_{i})}),$$
$$u_{S^i_0(p_i-j)}u_{\mathscr{O}(k\vec{c}+\vec{x}_i)}=
u_{\mathscr{O}(k\vec{c}+\vec{x}_i)}u_{S^i_0(p_i-j)}.$$
\end{proof}

\begin{Lem}\label{lem h_i1 x^+_ast}
$[h_{[i,1],l},x^+_{*,k}]=\frac{-[l]}{l}x^+_{*,k+l}$, for any $1\leq i\leq n$,
$l\in\mathbb{N}$ and $k\in\mathbb{Z}$
\end{Lem}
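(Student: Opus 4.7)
The plan is to prove the identity in two stages: first a reduction in $k$ that leaves only one special value to check, then a base case computation performed inside $\mathbf{H}(\mathcal{T}_{\lambda_i})$. Since the claim has the same shape for every $k$, namely $[h_{[i,1],l},u_{\mathscr{O}(k\vec{c})}]=\frac{-[l]}{l}\,u_{\mathscr{O}((k+l)\vec{c})}$, it suffices to isolate and settle a single base case.

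For the reduction step, I would combine Schiffmann's relation $[h_{*,l'},u_{\mathscr{O}(k\vec{c})}]=\frac{[2l']}{l'}\,u_{\mathscr{O}((k+l')\vec{c})}$ from \ref{subsec known relations}(a) with the commutativity $[h_{[i,1],l},h_{*,l'}]=0$ proved earlier in this section. The Jacobi identity in the associative algebra $\mathbf{H}(\Cohx)$ then gives
$$
[h_{[i,1],l},u_{\mathscr{O}((k+l')\vec{c})}]=\tfrac{l'}{[2l']}\bigl[h_{*,l'},[h_{[i,1],l},u_{\mathscr{O}(k\vec{c})}]\bigr],
$$
so once the identity is known at some $k=k_0$, iterating with $l'=1$ propagates it to all $k\geq k_0$; a symmetric manipulation using the shift relation (d) of \ref{subsec known relations} (which relates $u_{\mathscr{O}((t+1)\vec{c})}$ to $u_{\mathscr{O}(t\vec{c})}$) extends it to $k<k_0$. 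Taking $k_0=0$ reduces the lemma to computing $[h_{[i,1],l},u_{\mathscr{O}}]$.

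For this base case, I would expand $h_{[i,1],l}=\pi^i_{2,l}-(v^l+v^{-l})\pi^i_{1,l}+\pi^i_{0,l}$ (with the convention $\pi^i_{0,l}=0$) and analyze each term $[u_M,u_{\mathscr{O}}]$ with $M\in\mathcal{T}_{\lambda_i}$ of class $l\delta$. The Hall product $u_{\mathscr{O}}u_M$ is controlled by $\Ext^1(\mathscr{O},M)=H^1(\mathbb{X},M)=0$, which forces $u_{\mathscr{O}}u_M=v^l\,u_{\mathscr{O}\oplus M}$, while $u_M u_{\mathscr{O}}$ runs over the $l$-dimensional space $\Ext^1(M,\mathscr{O})$ (size determined from lemma \ref{lem comute Euler form}). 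A key observation is that the only line bundle that can appear as a middle term of such an extension is $\mathscr{O}(l\vec{c})$ itself, arising from the essentially unique embedding $\mathscr{O}\xrightarrow{\,x_i^{lp_i}\,}\mathscr{O}(l\vec{c})$ whose cokernel $S^i_0(lp_i)$ is supported entirely at $\lambda_i$; all other non-split extensions must have a non-trivial torsion summand. The plan is then to show that in the specific combination $h_{[i,1],l}$, every split contribution $u_{\mathscr{O}\oplus M}$ and every mixed-sheaf contribution $u_{\mathscr{O}(k'\vec{c})\oplus T}$ with $k'<l$ cancels, leaving only the line-bundle term with coefficient $-[l]/l$.

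The main obstacle is this combinatorial cancellation. The elements $\pi^i_{j,l}$ are defined only implicitly, through the generating function $\sum_{r\geq 1}(1-v^{-2jr})p_{j,r}T^{r-1}=\tfrac{d}{dT}\log C_j(T)$ with $C_j(T)$ summing over the set $\mathcal{M}_{j,r\delta_{p_i}}$ from \ref{subsec more about non-homogeneous tube}, and unwinding these generating functions far enough to check the required cancellations term by term is where the real work lies. Fortunately, the precise scalar $-[l]/l$ can be predicted in advance from Lemma \ref{lem hsi--}: there the same kind of computation appears with the roles of $h_{*,l}$ and $h_{[i,1],l}$ (and of $+$ and $-$) swapped, and produces the mirror scalar $+[l]/l$, which is exactly what the Drinfeld relation $[h_{[i,1],l},x^+_{*,k}]=-\frac{[l]}{l}x^+_{*,k+l}$ dictated by the Cartan pairing $a_{*,[i,1]}=-1$ demands.
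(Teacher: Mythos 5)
Your reduction in $k$ is fine as far as it goes (for the upward direction: Jacobi plus $[h_{[i,1],l},h_{*,l'}]=0$ and relation (a) of \ref{subsec known relations} does propagate the identity from $k_0$ to $k_0+l'$; the downward direction via relation (d) is not actually spelled out, since (d) involves $x^+_{[i,1],r}$ rather than $h_{[i,1],l}$, and $h_{*,-l'}$ is not available in the positive Hall algebra). But the reduction is not where the difficulty lies, and the base case --- which is the entire content of the lemma --- is left unproved. You correctly observe that expanding $h_{[i,1],l}=\pi^i_{2,l}-(v^l+v^{-l})\pi^i_{1,l}$ through the generating functions defining $p_{j,r}$ and verifying the cancellation of all split and mixed terms "is where the real work lies", and then you do not do that work. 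The appeal to Lemma \ref{lem hsi--} to "predict the scalar" is circular: that lemma is an entirely intra-tube computation (it never touches $\Ext^1$ between torsion sheaves and line bundles, which is the whole point here), and invoking what "the Drinfeld relation demands" assumes the very statement being proved. Knowing the scalar in advance would in any case not establish that $[h_{[i,1],l},u_{\mathscr{O}}]$ is a scalar multiple of $u_{\mathscr{O}(l\vec{c})}$ at all, i.e.\ that no other isomorphism classes survive.

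The paper's proof shows how to avoid the intractable generating-function expansion: it replaces $h_{[i,1],r}$ by the elements $\xi^i_r=[x^+_{[i,1],r-1},x^-_{[i,1],1}]K_{[i,1]}^{-1}=x^+_{[i,1],r-1}u_{S^i_0(p_i-1)}-v^2u_{S^i_0(p_i-1)}x^+_{[i,1],r-1}$, which have a concrete two-term closed form, proves by induction on $r$ (not on $k$) the commutation identity
\begin{equation*}
\xi^i_ru_{\mathscr{O}(k\vec{c})}=u_{\mathscr{O}(k\vec{c})}\xi^i_r+(v^{-1}-v)\sum_{s=1}^{r-1}v^{-(s-1)}u_{\mathscr{O}((k+s)\vec{c})}\xi^i_{r-s}-v^{-(r-1)}u_{\mathscr{O}((k+r)\vec{c})},
\end{equation*}
using only a handful of explicit Hall products with $u_{S^i_1}$ and $u_{S^i_0(p_i-1)}$ together with the recursion from \cite{S} 4.13, and then transfers the result to $h_{[i,1],r}$ via the triangular relation $rh_{[i,1],r}=r\xi^i_r-\sum_{s=1}^{r-1}(v-v^{-1})sh_{[i,1],s}\xi^i_{r-s}$ coming from the definition of $\psi_{[i,1],r}$. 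Some device of this kind --- trading the implicitly defined $h_{[i,1],l}$ for elements with an explicit expression --- is what your outline is missing; without it the proposal is a plan rather than a proof.
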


\begin{proof}
For fixed $i$, we know the following relation holds (see
Corollary \ref{Cor relations in a tube}):
$$[x^{+}_{[i,j],k},x^{-}_{[i,j],l}]=\frac{\psi_{[i,j],k+l}-\varphi_{[i,j],k+l}}{v-v^{-1}}.$$

Now we set
$$\xi^i_r=[x^+_{[i,1],r-1},x^-_{[i,1],1}]K^{-1}_{[i,1]}=\frac{1}{v-v^{-1}}\psi_{[i,1],r}K^{-1}_{[i,1]}.$$

By the definition of $\psi_{[i,1],r}$ we have
$$rh_{[i,1],r}=r\xi^i_r-\sum^{r-1}_{s=1}(v-v^{-1})sh_{[i,1],s}\xi^i_{r-s}.\eqno(*1)$$

The definition of $\xi^{i}_{r}$ yields
$$\xi^i_r=x^+_{[i,1],r-1}u_{S^i_0(p_i-1)}-v^2u_{S^i_0(p_i-1)}x^+_{[i,1],r-1}.$$

Thus we have
\begin{equation*}
\begin{split}
\xi^i_ru_{\mathscr{O}(k\vec{c})}&=
x^+_{[i,1],r-1}u_{S^i_0(p_i-1)}u_{\mathscr{O}(k\vec{c})}
-v^2u_{S^i_0(p_i-1)}x^+_{[i,1],r-1}u_{\mathscr{O}(k\vec{c})}\\
& =vx^+_{[i,1],r-1}u_{\mathscr{O}(k\vec{c})}u_{S^i_0(p_i-1)}
-v^2u_{S^i_0(p_i-1)}x^+_{[i,1],r-1}u_{\mathscr{O}(k\vec{c})}.
\end{split}
\end{equation*}

We claim that the following identity holds:
\begin{equation*}
\begin{split}
\xi^i_ru_{\mathscr{O}(k\vec{c})}&=u_{\mathscr{O}(k\vec{c})}
\xi^i_r+(v^{-1}-v)u_{\mathscr{O}((k+1)\vec{c})}\xi^i_{r-1}
+v^{-1}(v^{-1}-v)u_{\mathscr{O}((k+2)\vec{c})}\xi^i_{r-2}+\\&\cdots
+v^{-(r-2)}(v^{-1}-v)u_{\mathscr{O}((k+r-1)\vec{c})}\xi^i_{1}
-v^{-(r-1)}u_{\mathscr{O}((k+r)\vec{c})}.\ \ \ \ \ \ \ \ \ \ \ \ \ \ \ \ (\ast 2)
\end{split}
\end{equation*}

Now we prove the claim by induction on $r$.

When $r=1$, we have
$\xi^i_1=u_{S^i_1}u_{S^i_0(p_{i}-1)}-v^2u_{S^i_0(p_i-1)}u_{S^i_1}$.

Also we have
$$u_{S^i_1}u_{\mathscr{O}(k\vec{c})}=v^{-1}(u_{\mathscr{O}(k\vec{c})\oplus
 S^i_1}+u_{\mathscr{O}(k\vec{c}+\vec{x_i})}),$$
$$u_{\mathscr{O}(k\vec{c})}u_{S^i_1}=u_{\mathscr{O}(k\vec{c})\oplus
 S^i_1},$$
$$u_{S^i_0(p_i-1)}u_{\mathscr{O}(k\vec{c}+\vec{x}_i)}=
 v^{-1}(u_{\mathscr{O}(k\vec{c}+\vec{x}_i)\oplus
 S^i_0(p_i-1)}+u_{\mathscr{O}((k+1)\vec{c})}),$$
$$u_{\mathscr{O}(k\vec{c}+\vec{x}_i)}u_{S^i_0(p_i-1)}
=u_{\mathscr{O}(k\vec{c}+\vec{x}_i)\oplus
 S^i_0(p_i-1)}.$$

Thus we deduce that
\begin{equation*}
\begin{split}
\xi^i_1u_{\mathscr{O}(k\vec{c})}&=
 vu_{S^i_1}u_{\mathscr{O}(k\vec{c})}u_{S^i_0(p_i-1)}
 -v^2u_{S^i_0(p_i-1)}u_{S^i_1}u_{\mathscr{O}(k\vec{c})}\\
 &=u_{\mathscr{O}(k\vec{c})}
 u_{S^i_1}u_{S^i_0(p_i-1)}+u_{\mathscr{O}(k\vec{c}+\vec{x}_i)}u_{S^i_0(p_i-1)}\\
 &\ \ \ \ \ \ \ \ -vu_{S^i_0(p_i-1)}u_{\mathscr{O}(k\vec{c})}
 u_{S^i_1}-vu_{S^i_0(p_i-1)}u_{\mathscr{O}(k\vec{c}+\vec{x}_i)}\\
 &=u_{\mathscr{O}(k\vec{c})}u_{S^i_1}u_{S^i_0(p_i-1)}
 +u_{\mathscr{O}(k\vec{c}+\vec{x}_i)}u_{S^i_0(p_i-1)}
 -v^2u_{\mathscr{O}(k\vec{c})}u_{S^i_0(p_i-1)}u_{S^i_1}\\
 &\ \ \ \ \ \ \ \ -u_{\mathscr{O}(k\vec{c}+\vec{x}_i)}u_{S^i_0(p_i-1)}-
 u_{\mathscr{O}((k+1)\vec{c})}\\
 &=u_{\mathscr{O}(k\vec{c})}\xi^i_1-u_{\mathscr{O}((k+1)\vec{c})}.
\end{split}
\end{equation*}

Then we assume that ($\ast 2$) holds for $r-1$. By \cite{S} 4.13, we have
\begin{equation*}
\begin{split}
\xi^i_ru_{\mathscr{O}(k\vec{c})}&=
(u_{\mathscr{O}(k\vec{c})}\xi^i_{r-1}+\xi^i_{r-2}u_{\mathscr{O}((k+1)\vec{c})}
-vu_{\mathscr{O}((k+1)\vec{c})}\xi^i_{r-2})u_{S^i_0}\\
&\ \ \ \ \ \ \ \ \
-vu_{S^i_0}(u_{\mathscr{O}(k\vec{c})}\xi^i_{r-1}+\xi^i_{r-2}u_{\mathscr{O}((k+1)\vec{c})}
-vu_{\mathscr{O}((k+1)\vec{c})}\xi^i_{r-2})\\
&=u_{\mathscr{O}(k\vec{c})}\xi^i_r
-vu_{\mathscr{O}((k+1)\vec{c})}\xi^i_{r-1}+v^{-1}\xi^i_{r-1}u_{\mathscr{O}((k+1)\vec{c})}.
\end{split}
\end{equation*}

This completes the proof of ($\ast 2$). And the lemma is a consequence of ($\ast 2$) and ($\ast 1$).
\end{proof}

\begin{Lem}\label{lem x^+_ij x^+ast and h_ij x^+ast j geq 2}
(1). $[x^+_{[i,j],l},x^+_{*,k}]=0$, for $1\leq i\leq n, 2\leq j\leq p_i-1$, $l\geq 0$ and
$k\in\mathbb{Z}$.

(2). $[h_{[i,j],l},x^+_{*,k}]=0$, for $1\leq i\leq n, 2\leq j\leq p_i-1$, $l\in\mathbb{N}$
and $k\in\mathbb{Z}$.
\end{Lem}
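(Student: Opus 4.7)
\emph{Proof proposal.} We prove part (2) first, then derive part (1) from it together with the $l=0$ case. For $l=0$ in part (1), Lemma \ref{lem comute Euler form} gives $\langle[\mathscr{O}(k\vec{c})],[S^i_j]\rangle=\langle[S^i_j],[\mathscr{O}(k\vec{c})]\rangle=0$ for $2\le j\le p_i-1$; combined with $\Hom(S^i_j,\mathscr{O}(k\vec{c}))=0$ (a torsion sheaf has no maps into a locally free sheaf) and $\Ext^1(\mathscr{O}(k\vec{c}),S^i_j)=0$ (by Serre duality on $\mathbb{X}$), all four Hom and Ext groups between $\mathscr{O}(k\vec{c})$ and $S^i_j$ vanish, so the two basis elements commute in the Hall algebra and $[u^+_{S^i_j},u^+_{\mathscr{O}(k\vec{c})}]=0$.

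For part (2) we expand $h_{[i,j],l}=\pi^i_{j+1,l}-(v^l+v^{-l})\pi^i_{j,l}+\pi^i_{j-1,l}$ and compute each $[\pi^i_{s,l},u^+_{\mathscr{O}(k\vec{c})}]$ for $s\in\{j-1,j,j+1\}$. By the construction in \ref{subsec more about non-homogeneous tube}, each $\pi^i_{s,l}$ is a combination of elements $u^+_M$ with $M\in\mathscr{T}_{\lambda_i}$ of dimension a multiple of $\delta$ and $\soc(M)\subseteq S^i_1\oplus\cdots\oplus S^i_s$. The commutator of such a $u^+_M$ with $u^+_{\mathscr{O}(k\vec{c})}$ can be non-trivial only through the simples $S^i_0$ and $S^i_1$ (the only tube simples with non-zero Euler pairing with $\mathscr{O}$, by Lemma \ref{lem comute Euler form}); careful bookkeeping of the short exact sequences $0\to\mathscr{O}(k\vec{c})\to N\to M\to 0$ and $0\to M\to N'\to\mathscr{O}(k\vec{c})\to 0$ gives an explicit formula for each bracket. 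The coefficients $1,-(v^l+v^{-l}),1$ in $h_{[i,j],l}$ are then arranged precisely so that all of these boundary contributions telescope to zero whenever $j\ge 2$; in contrast, for $j=1$ they survive and produce the non-trivial right-hand side of Lemma \ref{lem h_i1 x^+_ast}. A convenient organisation is to handle $l=1$ first via the tractable explicit form of $c^i_{s,1}$, and then propagate to general $l$ using the centrality of $\pi^i_{p_i,r}$ in $\mathbf{H}(\mathscr{T}_{\lambda_i})$ (Remark \ref{rem center of HCm}) together with the known relation $[T_r,u^+_{\mathscr{O}(k\vec{c})}]=\tfrac{[2r]}{r}u^+_{\mathscr{O}((k+r)\vec{c})}$ from \ref{subsec known relations}(a).

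Once (2) and the $l=0$ case of (1) are established, part (1) for $l\ge 1$ follows by induction from the definition $x^+_{[i,j],l}=\tfrac{l}{[2l]}[h_{[i,j],l},x^+_{[i,j],0}]$ in Theorem \ref{thm main} and the Jacobi identity
\[
[x^+_{[i,j],l},x^+_{*,k}]=\tfrac{l}{[2l]}\bigl([h_{[i,j],l},[x^+_{[i,j],0},x^+_{*,k}]]-[x^+_{[i,j],0},[h_{[i,j],l},x^+_{*,k}]]\bigr),
\]
whose two inner brackets vanish respectively by the $l=0$ case of (1) and by (2). The main technical obstacle is the cancellation in (2): verifying that the specific linear combination defining $h_{[i,j],l}$ annihilates all $S^i_0$- and $S^i_1$-boundary contributions from each individual $\pi^i_{s,l}$ when $j\ge 2$, parallel to but more delicate than the surviving case $j=1$ already handled in Lemma \ref{lem h_i1 x^+_ast}.
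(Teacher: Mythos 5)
There is a genuine gap: the entire argument rests on part (2), and your ``proof'' of part (2) is a plan rather than a proof. You assert that a ``careful bookkeeping of the short exact sequences'' yields explicit formulas for each $[\pi^i_{s,l},u^+_{\mathscr{O}(k\vec{c})}]$ and that the coefficients $1,-(v^l+v^{-l}),1$ make everything ``telescope to zero,'' but none of this is carried out, and it is precisely the hard part. The elements $\pi^i_{s,l}$ are defined through logarithmic derivatives of generating series in the $c_{s,r}$, which are signed sums over all modules of dimension $r\delta$ with prescribed socle; computing their commutators with $u^+_{\mathscr{O}(k\vec{c})}$ directly is far from routine --- note that even the single surviving case $j=1$ (Lemma \ref{lem h_i1 x^+_ast}) required introducing the auxiliary elements $\xi^i_r$, an induction establishing the identity $(\ast 2)$, and the recursion $(\ast 1)$. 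Your plan would have to redo an analogue of all of that for every $s$, and you give no reason to believe the claimed cancellation beyond the desired conclusion. Your deduction of (1) from (2) via the Jacobi identity is fine as far as it goes, as is the $l=0$ base case, but since (2) is not established the argument does not close.

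The paper avoids this computation entirely by reversing your logical order. It first proves (1) by induction on $j$, writing $x^+_{[i,j],l}$ as a multiple of $[h_{[i,j-1],l},x^+_{[i,j],0}]$ --- i.e.\ using the adjacent vertex $[i,j-1]$ rather than $[i,j]$ itself --- so that after applying Jacobi the only commutators needed are $[x^+_{[i,j],0},x^+_{*,k}]=0$ (no extensions, your base case) and $[h_{[i,j-1],l},x^+_{*,k}]$, which is either zero by the inductive hypothesis (for $j-1\ge 2$) or proportional to $x^+_{*,k+l}$ by Lemma \ref{lem h_i1 x^+_ast} (for $j-1=1$), and in the latter case is then killed by another application of the base case. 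Part (2) is then deduced from (1) using the identity expressing $\psi_{[i,j],\bullet}$ (hence $h_{[i,j],l}$) as a commutator of $x^+_{[i,j],\bullet}$ with $x^-_{[i,j],1}$ together with $[x^-_{[i,j],1},x^+_{*,k}]=0$ from Lemma \ref{lem hsi-}. If you want to salvage your write-up, the cleanest fix is to adopt this order: prove (1) first by the $j$-induction through $h_{[i,j-1],l}$, then obtain (2) as a corollary, rather than attempting the direct Hall-algebra computation of $[\pi^i_{s,l},u^+_{\mathscr{O}(k\vec{c})}]$.
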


\begin{proof}
First we ovserve that (2) is a consequence of (1), since
$[h_{[i,j],l},x^+_{*,k}]=0$ if and only if
$[[x^{+}_{[i,j],l-1},x^{-}_{[i,j],1}],x^{+}_{*,k}]=0$ and we know that
$[x^-_{[i,j],1}],x^+_{*,k}]=0$ by Lemma \ref{lem hsi-}.

Note that $[x^+_{[i,j],0},x^+_{*,k}]=0$, because there is no non-trivial
extension between $\mathscr{O}(k\vec{c})$ and $S^i_j$ for $j\geq 2$.

Now we argue by induction on $j$. When $j=2$, using Lemma \ref{lem h_i1 x^+_ast} we have
\begin{equation*}
\begin{split}
[x^+_{[i,2],l},x^+_{*,k}]&=\frac{-l}{[l]}[[h_{[i,1],l},u_{S^i_2}],x^+_{*,k}]\\
&=\frac{-l}{[l]}[[h_{[i,1],l},x^+_{*,k}],u_{S^i_2}]=0.
\end{split}
\end{equation*}

Assume that for $1<j<m$ the relation holds, we also have
$[h_{[i,j],l},x^+_{*,k}]=0$ for $1<j<m$. Hence
\begin{equation*}
\begin{split}
[x^+_{[i,m],l},x^+_{*,k}]&=\frac{-l}{[l]}[[h_{[i,m-1],l},x^+_{[i,m],0}],x^+_{*,k}]\\
&=\frac{-l}{[l]}[[h_{[i,m-1],l},x^+_{*,k}],x^+_{[i,m],0}]=0.
\end{split}
\end{equation*}
\end{proof}

\begin{Lem}
$[x^-_{[i,j],l},x^+_{*,k}]=0$, for any $1\leq i\leq n,1\leq j\leq p_i-1,l\geq 1$ and
$k\in\mathbb{Z}$.
\end{Lem}

\begin{proof}
For the case $l=1$ this is just Lemma \ref{lem hsi-}. For $l>1$ we apply $\ad(h_{[i,j],r})$ and use lemma \ref{lem h_i1
x^+_ast} and lemma \ref{lem x^+_ij x^+ast and h_ij x^+ast j geq 2} (2).
\end{proof}

\section{Relations in $\mathbf{DH}(\Coh(\mathbb{X}))$}\label{sec relation of the whole part}
We prove all the Drinfeld relations in this section. In the
last section we have proved the relations in
$\mathbf{H}(\Coh(\mathbb{X}))$. In the same way we can prove the relations in
$\mathbf{H}^{-}(\Coh(\mathbb{X}))$. So we focus
on the relations involving both positive and negative elements.

\subsection{}
We first investigate the comultiplication of $h_{\ast,r}$ and $u_{\mathscr{O}(k\vec{c})}$ ($r\geq 0, k\in\mathbb{Z}$) in details, which is crucial for our calculations.
By definition we have
$$\Delta(h_{*,r})=h_{*,r}\otimes 1+K_{r\delta}\otimes h_{*,r}+\sum_{0<[A],[B]<r\delta} f(A,B) u_AK_{[B]}\otimes u_B,$$
\begin{equation*}
\begin{split}
\Delta(u_{\mathscr{O}(k\vec{c})})&=
u_{\mathscr{O}(k\vec{c})}\otimes
1+\sum^\infty_{r=0}\theta_{*,r}K_{\alpha_*+(k-r)\delta}\otimes
u_{\mathscr{O}((k-r)\vec{c})}\\
&+\sum_{\vec{x}\in L(\mathbf{p})_{+},\vec{x}\neq t\delta,\forall t\in \mathbb{N}}\theta_{\vec{x}}K_{[\mathscr{O}((k-r)\vec{c}-\vec{x})]}\otimes
u_{\mathscr{O}((k-r)\vec{c}-\vec{x})},
\end{split}
\end{equation*}
where $\{\theta_{*,r}\}_{r\geq 1}$ is defined by the following generating series
$$\sum_{k\geq 0}\theta_{*,r}u^k=\exp((v-v^{-1})\sum^\infty_{k=1}h_{*,k}u^k).$$

In the following, for simplicity we will call $\sum_{0<[A],[B]<r\delta} f(A,B) u_AK_{[B]}\otimes u_B$ the remaining terms in $\Delta(h_{*,r})$ and
$\sum_{\vec{x}\in L(\mathbf{p})_{+},\vec{x}\neq t\delta,\forall t\in \mathbb{N}}\theta_{\vec{x}}K_{[\mathscr{O}((k-r)\vec{c}-\vec{x})]}\otimes
u_{\mathscr{O}((k-r)\vec{c}-\vec{x})}$ the remaining terms in $\Delta(u_{\mathscr{O}(k\vec{c})})$.

\subsection{}
In this subsection we prove the relation $[h_{s,r},h_{t,m}]=0$ for $1\leq s,t \leq n$ and $rm<0$. We assume that $m>0$ and $r<0$.

\begin{Lem}
For fixed $i$, we have
$$[\pi^{+i}_{l_1,k_1},\pi^{-i}_{l_2,k_2}]=0,$$ where $1\leq l_1,l_2\leq p_i$ and $k_1,k_2\in \mathbb{N}$.
\end{Lem}

\begin{proof}
For simplicity, we will omit $i$ and write $p$ for $p_i$. Also we
write $h_{j,k}$ for $h_{[i,j],k}$, for any $1\leq j\leq p_i-1$.

For $1\leq k\leq p-1$, we have the
relation$$[\pi^{+}_{p,k_1},h_{p-k,-k_2}]=0.$$

This implies
$$[\pi^{+}_{p,k_1},\pi^-_{p-k+1,k_2}-(v^{k_2}+v^{-k_2})\pi^-_{p-k,k_2}+\pi^-_{p-k-1,k_2}]=0.$$

Note that $[\pi^{+}_{p,k_1},\pi^-_{p,k_2}]=0$. Hence we get a system
of homogeneous linear equations $AX=0$, where
 \[A=
 \left( \begin{array}{cccccc}a&1&0&0&...&0\\1&a&1&0&...&0\\0&1&a&1&...&0\\
 ...&...&...&...&...&...\\
 0&0&...&1&a&1\\
 0&0&...&0&1&a
 \end{array} \right)
 \]

$$X=(b_{p,p-1},b_{p,p-2},\cdots,b_{p,2},b_{p,1})^{t},$$
and $a=-(v^{k_2}+v^{-k_2})$, $b_{r,l}=[\pi^{+}_{p,k_1},\pi^-_{l,k_2}]$.

It is clear that $A$ is a nonsingular matrix, thus
$[\pi^{+}_{p,k_1},\pi^-_{l,k_2}]=0$ holds for any $1\leq l\leq p-1$.

Similarly, the relations
$[h_{l_1,k_1},h_{l_2,-k_2}]=0$ ($1\leq l_1\leq p-1$, $1\leq l_2\leq
p-1$) induce a system of homogeneous linear equations with
$(p-1)\times (p-1)$ variables $[\pi^{+}_{l_1,k_1},\pi^-_{l_2,k_2}]$
and the coefficient matrix is also nonsingular. Therefore, for any
$1\leq l_1\leq p-1,1\leq l_2\leq p-1$ and $k_1,k_2\in\mathbb{N}$, we have
$[\pi^{+}_{l_1,k_1},\pi^-_{l_2,k_2}]=0$.
\end{proof}

Now we can deduce that
$$[h_{*,r},h_{*,m}]=-\sum^{n}_{i=1}[\pi^{-i}_{1,-r},\pi^{+i}_{1,m}]=0,$$
$$[h_{*,r},h_{[i,j],m}]=-[\pi^{-i}_{1,-r},\pi^{+i}_{j+1,m}-(v^m+v^{-m})\pi^{-i}_{j,m}+\pi^{-i}_{j-1,m}]=0.$$

\subsection{}
In this subsection we deal with the following relation for the remaining
cases
$$[h_{s,r},x^\pm_{t,m}]=\pm \frac{1}{r}[la_{st}]x^\pm_{t,r+m}.$$

\begin{Lem}\label{lem comult considered 1}
$[h_{\ast,-l},x^{+}_{\ast,k}]=\frac{1}{l}[2l]x^{+}_{\ast,-l+k}.$ For
any $l>0$ and $k\in\mathbb{Z}$.
\end{Lem}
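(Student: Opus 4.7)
My plan is to reduce the statement to the positive case (already established by Schiffmann, see \ref{subsec known relations}(a)) via the Drinfeld double commutation relation. Set $m=-l>0$, so $h_{\ast,l}=-T^-_m$ and the desired identity becomes
$$[T^-_m,u^+_{\mathscr{O}(k\vec{c})}]=-\frac{[2m]}{m}u^+_{\mathscr{O}((k-m)\vec{c})}.$$
The natural tool is the Drinfeld double relation $D(T^-_m,u^+_{\mathscr{O}(k\vec{c})})$ spelled out in \ref{subsec double Hall alg}, which rewrites $T^-_m\,u^+_{\mathscr{O}(k\vec{c})}$ in terms of the coproducts $\Delta(T^-_m)$, $\Delta(u^+_{\mathscr{O}(k\vec{c})})$ and the skew-Hopf pairing $\varphi$.

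The first ingredient is $\Delta(u^+_{\mathscr{O}(k\vec{c})})$. Since $\mathscr{O}(k\vec{c})$ is torsion-free of rank one, every subobject is either $0$ or a sub-line-bundle with torsion quotient, giving
$$\Delta(u^+_{\mathscr{O}(k\vec{c})})=u^+_{\mathscr{O}(k\vec{c})}\otimes 1+K_{\alpha_\ast+k\delta}\otimes u^+_{\mathscr{O}(k\vec{c})}+\sum_{T\neq 0}c_T\,u^+_T K_{\mathscr{O}(\vec{k'})}\otimes u^+_{\mathscr{O}(\vec{k'})},$$
where $T$ runs over nonzero torsion quotients $\mathscr{O}(k\vec{c})/\mathscr{O}(\vec{k'})$. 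The second ingredient is $\Delta(T^-_m)=\sum_x\Delta(\mathbf{h}^-_{m,x})$, localised at each closed point: inside one tube $\mathscr{T}_x$, the coproduct involves only torsion classes in both tensor factors, up to $K$-twists. The pairing $\varphi(K_\mu u^+_\alpha,K_\nu u^-_\beta)$ vanishes unless the underlying Hall classes $\alpha,\beta$ coincide, so the massive double sum collapses: the pairing $\varphi(b^{(1)+},a^{(2)-})$ forces the torsion class $T$ appearing in the first factor of $\Delta(u^+_{\mathscr{O}(k\vec{c})})$ to coincide with the quotient piece from $\Delta(T^-_m)$, while $\varphi(b^{(2)+},a^{(1)-})$ forces the sub-line-bundle $\mathscr{O}(\vec{k'})$ to match up via the $K$-twist (pure scalar) on the torsion side. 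This localises everything to a single point $x$ and a single degree: only $\vec{k'}=(k-m)\vec{c}$ survives for degree reasons, and $T$ must range over torsion sheaves of degree $m$ supported at $x$.

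Once localised, the remaining computation reduces to an identity inside $\mathbf{DH}(\mathscr{T}_x)$ paired with the residual $K$-factor contribution $v^{(\mathscr{O}(\vec{k'}),T)}$ coming from $\varphi$. Here the definition $\mathbf{h}_{r,x}=\frac{[r]}{r}\varphi_1(p_r)$ together with the primitivity of the power-sum symmetric function $p_r$ collapses the sum over partitions of $m$ to the single term corresponding to the homomorphism $\mathscr{O}(\vec{k'})\hookrightarrow\mathscr{O}(k\vec{c})$, producing the coefficient $\frac{[2m]}{m}$ after combining with the $v^{\pm}$-factors and the $\delta(d\mid m)$ condition built into $\mathbf{h}_{m,x}$; summing over $x$ reconstructs $u^+_{\mathscr{O}((k-m)\vec{c})}$, and the overall sign comes from $h_{\ast,l}=-T^-_m$. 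The main obstacle is precisely justifying this collapse, since both coproducts contain a priori many ``rest terms''; the argument is parallel to, and a prototype for, the analyses carried out in lemmas \ref{lem comult considered 1} and \ref{lem comult considered 2}, where one has to verify that cross-terms with mismatched torsion types, or with multiple non-trivial factors on each side, contribute zero either because the pairing vanishes on non-matching Hall classes or because the symmetric-function identities inside one tube produce cancellations. After these checks, what remains is a direct computation mirroring Schiffmann's positive case, yielding exactly the claimed constant $\frac{[2l]}{l}$.
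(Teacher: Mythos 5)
Your overall strategy coincides with the paper's: write $h_{\ast,l}=-T^{-}_{-l}$, invoke the Drinfeld double relation $D(T^{-}_{-l},u^{+}_{\mathscr{O}(k\vec{c})})$, argue that the skew-Hopf pairing kills all mismatched cross terms, and read off the constant from $(\theta_{\ast,r},h_{\ast,r})=\frac{[2r]}{r}$. The problem is that the step you yourself flag as ``the main obstacle'' --- justifying the collapse of the two coproducts --- is exactly where the content of the proof lies, and the two reasons you give for it do not hold. First, ``degree reasons'' do not force $\vec{k'}=(k-m)\vec{c}$: the second tensor factors of $\Delta(T^{-}_{m})$ are classes of \emph{subobjects} of the torsion sheaves occurring in $T_{m}$, hence realize every degree $r\delta$ with $0\leq r\leq m$ (and, inside an exceptional tube, every class with socle in $(S^{i}_{1})^{\oplus}$), so the pairing against $u^{+}_{T}K_{\vec{k'}}$ does not a priori single out the top-degree term. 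Second, the primitivity of the power sum $p_{r}$ only disposes of the homogeneous tubes: for an exceptional point $\lambda_{i}$ the element $\mathbf{h}_{m,\lambda_{i}}=\Theta_{\lambda_{i}}\circ\Psi(\mathbf{h}_{m})$ is \emph{not} primitive in $\mathcal{H}(\mathscr{T}_{\lambda_{i}})$, because the image of the embedding $\rep_{0}(C_{1})\hookrightarrow\rep_{0}(C_{p_{i}})$ is not closed under subobjects or quotients, so $\Psi$ is not a coalgebra map. Your appeal to ``the analyses carried out in lemmas \ref{lem comult considered 1} and \ref{lem comult considered 2}'' is moreover circular, since \ref{lem comult considered 1} is the statement being proved.

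The missing ingredient, which is the actual point of the paper's proof, is a geometric vanishing argument for the rest terms of $\Delta(u_{\mathscr{O}(k\vec{c})})$. For a term $u_{A}K_{\mu}\otimes u_{\mathscr{O}(\vec{x})}$ with $\vec{x}\neq s\vec{c}$, the torsion quotient $A=\mathscr{O}(k\vec{c})/\mathscr{O}(\vec{x})$ is never isomorphic to a subobject of any sheaf $B$ occurring in $T_{m}$: the $B$'s have socle contained in $(S^{i}_{1})^{\oplus}$ in each exceptional tube, so an embedding $A\hookrightarrow B$ would yield an injection $S^{m}_{1}\hookrightarrow A$ for an index $m$ with $\Ext^{1}(S^{m}_{1},\mathscr{O}(\vec{x}))=\Hom(\mathscr{O}(\vec{x}),S^{m}_{0})=0$; pulling back the extension $0\to\mathscr{O}(\vec{x})\to\mathscr{O}(k\vec{c})\to A\to 0$ along $S^{m}_{1}\to A$ then forces a splitting and hence a nonzero map $S^{m}_{1}\to\mathscr{O}(k\vec{c})$, a contradiction. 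Only after this reduction do the surviving terms organize themselves into the pairing of $\theta_{\ast,r}$ against $h_{\ast,r}$ and produce the coefficient $\frac{[2l]}{l}$. Without this argument (or an explicit cancellation of the residual terms $u^{-}_{M/N}u^{+}_{\mathscr{O}((k-r)\vec{c})}$ with $0<r<m$), your proof is a correct plan but not a proof.
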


\begin{proof}
Recall that
$$\Delta(u_{\mathscr{O}(k\vec{c})})=
u_{\mathscr{O}(k\vec{c})}\otimes
1+\sum^\infty_{r=0}\theta_{*,r}K_{\alpha_*+(k-r)\delta}\otimes
u_{\mathscr{O}((k-r)\vec{c})}+\text{remaining terms}.$$

We prove that if $A$ is a
quotient of $\mathscr{O}(k\vec{c})$ as well as a subsheaf of a torsion sheaf occurring with nonzero coefficient in $h_{\ast,l}$, then
$[A]=s\delta$ for some $s\in\mathbb{N}$.

Otherwise, assume that $[A]$ is not a multiple of $\delta$, then there exists an exact sequence
$$0\rightarrow \mathscr{O}(\vec{x})\rightarrow
\mathscr{O}(k\vec{c})\rightarrow A\rightarrow 0,$$
where $\vec{x}$ is not a multiple of $\vec{c}$.

This implies
$$\Ext^1(S^m_{1},\mathscr{O}(\vec{x}))=\Hom(\mathscr{O}(\vec{x}),S^m_{0})=0,\ \text{for some }1\leq m\leq n.$$

There is always an injective
map from $S^m_{1}$ to $A$. Thus we have the following commutative
diagram where the right square is a pull-back diagram:
\begin{displaymath}
\xymatrix{
0\ar[r]&\mathscr{O}(\vec{x})\ar[r]\ar[d]&\mathscr{O}(\vec{x})\oplus S^m_{1}\ar[d]^f\ar[r]^-{p_2}&S^m_{1}\ar[d]^{i}\ar[r]&0\\
0\ar[r]&\mathscr{O}(\vec{x})\ar[r]&\mathscr{O}(k\vec{c})\ar[r]^-{g}&A\ar[r]&0
}
\end{displaymath}

Since there is no non-zero morphism from $S^m_{1}$ to
$\mathscr{O}(k\vec{c})$, we have $gf=0$. But $ip_2$ is clear not
zero, which is a contradiction.

This means we do not need to consider the remaining terms in $\Delta(u_{\mathscr{O}(k\vec{c})})$.

Now using the definition of the Drinfeld double and note that
$(\theta_{*,r},h_{*,r})=\frac{[2r]}{r}$, the lemma follows.
\end{proof}

\begin{Lem}
(1). $[h_{*,l},x^+_{[i,1],k}]= \frac{1}{l}[-l]x^+_{[i,1],k+l}$, for $l<0$, $1\leq i\leq n$ and $k\geq 0$.

(2). $[h_{*,l},x^-_{[i,1],k}]= -\frac{1}{l}[-l]x^-_{[i,1],k+l}$, for $l<0$ $1\leq i\leq n$ and $k\geq 1$.
\end{Lem}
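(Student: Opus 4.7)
The plan is to reduce both (1) and (2) to base cases ($k=0$ and $k=1$ respectively) by induction on $k$, and then handle the base cases via the comultiplication-and-pairing argument used in Lemma \ref{lem comult considered 1}.

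For the inductive step in (1), I would apply the defining recursion $x^+_{[i,1],k}=\frac{k}{[2k]}[h_{[i,1],k},x^+_{[i,1],0}]$ inside the outer commutator with $h_{*,l}$. The Jacobi identity splits this into two pieces; the first vanishes because $[h_{*,l},h_{[i,1],k}]=0$ was already established earlier in this section (the case $rm<0$), and the second reproduces the base commutator conjugated by $\mathrm{ad}(h_{[i,1],k})$. Combining with the Drinfeld relation $[h_{[i,1],k},x^+_{[i,1],l}]=\frac{[2k]}{k}x^+_{[i,1],k+l}$ in $U_v(\widehat{\mathfrak{sl}}_{p_i})$ (Corollary~\ref{Cor relations in a tube}), the coefficients collapse to give the desired formula. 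Part (2) is reduced by exactly the same maneuver applied to the recursion $x^-_{[i,1],k}=\frac{-(k-1)}{[2(k-1)]}[h_{[i,1],k-1},x^-_{[i,1],1}]$ for $k\geq 2$, using the analogue $[h_{[i,1],k-1},x^-_{[i,1],1+l}]=-\frac{[2(k-1)]}{k-1}x^-_{[i,1],k+l}$.

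For the base case of (1), I would write $h_{*,l}=-T^-_{-l}=-\sum_{x\in\mathbb{X}}\mathbf{h}^-_{-l,x}$. Since $\Hom(\mathscr{F},S^i_1)=\Ext^1(S^i_1,\mathscr{F})=0$ for every torsion sheaf $\mathscr{F}$ supported away from $\lambda_i$, only the summand $-\mathbf{h}^-_{-l,\lambda_i}$ can contribute a nontrivial commutator with $u^+_{S^i_1}$, so the problem descends to $\dh(\mathscr{T}_{\lambda_i})\simeq\dh(C_{p_i})$. Expanding via the defining relation of the reduced Drinfeld double, with $\Delta(u^+_{S^i_1})=u^+_{S^i_1}\otimes 1+K_{[i,1]}\otimes u^+_{S^i_1}$, the Hall pairing $\varphi$ selects from $\Delta(\mathbf{h}^-_{-l,\lambda_i})$ only those tensor components whose right factor has class $[S^i_1]$; these correspond to $S^i_1$-subsheaves of the torsion summands appearing in $\mathbf{h}^-_{-l,\lambda_i}$. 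The base case of (2) is handled identically with $\eta^+_{i,1}$ in place of $u^+_{S^i_1}$, the same Hom/Ext vanishing still isolating the tube at $\lambda_i$ since the sheaves occurring in $\eta^+_{i,1}$ all lie in $\mathscr{T}_{\lambda_i}$.

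The hard part will be matching the surviving sum inside $\dh(C_{p_i})$ with $\frac{1}{l}[-l]\,x^+_{[i,1],l}$ (respectively $-\frac{1}{l}[-l]\,x^-_{[i,1],1+l}$), since for $l<0$ the target element is defined inductively from $\eta^-_{i,1}$ (respectively $u^-_{S^i_1}$) via iterated brackets with $h_{[i,1],-k}$. The cleanest route I see is to translate Hubery's identities used in Lemmas \ref{lem h_i1 x^+_ast} and \ref{lem hsi--} through the symmetry of $\dh(C_{p_i})$ that interchanges the positive and negative Hall factors; this converts the already-established $l>0$ computations into the required $l<0$ ones. The bookkeeping obstacle is the $v$-powers coming from the $K$-twists in the comultiplication, which must be tracked with the same care as in the proof of Lemma \ref{lem hsi-}, where precise values of $\dim\Hom$ and $\dim\Ext^1$ between line bundles and tube objects entered the calculation.
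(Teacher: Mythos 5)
Your inductive reduction to the base cases $k=0$ (for (1)) and $k=1$ (for (2)) is sound: $[h_{*,l},h_{[i,1],m}]=0$ for $lm<0$ is available from the preceding subsection, and the mixed relations $[h_{[i,1],m},x^{\pm}_{[i,1],t}]$ inside $\dh(\mathscr{T}_{\lambda_i})\simeq\dh(C_{p_i})$ are covered by Corollary \ref{Cor relations in a tube}, so the Jacobi step goes through. The gap is the base case, which is the whole content of the lemma, and your plan for it does not close. The direct Drinfeld-double computation of $[\mathbf{h}^{-}_{-l,\lambda_i},u^{+}_{S^i_1}]$ leaves you with a sum of terms $u^{-}_{M}K_{\bullet}$ over torsion sheaves $M$ of class $-l\delta-[S^i_1]$, and identifying that sum with $\frac{1}{l}[-l]\,x^{+}_{[i,1],l}$ --- an element defined only through iterated brackets starting from $\eta^{-}_{i,1}$ --- is itself a substantial Hall-algebra identity, which you defer to ``translating Hubery's identities through the symmetry.'' But the involution exchanging $\mathbf{H}^{+}$ and $\mathbf{H}^{-}$ turns relations that live entirely in the positive Hall algebra (such as $[T_r,x^{+}_{[i,1],r_1}]$ with $r,r_1\geq 0$, or Lemma \ref{lem hsi--}) into relations living entirely in the negative Hall algebra, i.e.\ into $[h_{*,l},x^{+}_{[i,1],k}]$ with $k\leq -1$ and $[h_{*,l},x^{-}_{[i,1],k}]$ with $k\leq 0$. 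The cases actually required, $k\geq 0$ and $k\geq 1$ respectively, are genuinely mixed commutators between the two halves and cannot be reached by dualizing a one-sided statement; so the step you flag as ``the hard part'' is not merely bookkeeping of $v$-powers but the missing argument.

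The idea the paper uses, and which your proposal lacks, is that no pairing computation is needed here at all. On elements of the tube $\mathscr{T}_{\lambda_i}$ the operator $[h_{*,l},-]$ with $l<0$ coincides with $[-\pi^{-i}_{1,-l},-]$, and since $h_{[i,1],l}=-\pi^{-i}_{2,-l}+(v^{l}+v^{-l})\pi^{-i}_{1,-l}$ (the $\pi_{0}$ term vanishing), one gets $[h_{*,l},x]=\frac{-1}{v^{l}+v^{-l}}[h_{[i,1],l},x]$ for any $x$ annihilated by $\ad(\pi^{-i}_{2,-l})$. The needed vanishings $[\pi^{-i}_{2,-l},x^{+}_{[i,1],k}]=0$ and $[\pi^{-i}_{2,-l},x^{-}_{[i,1],k}]=0$ are obtained by dualizing the purely positive identities $[\pi^{+i}_{2,-l},x^{-}_{[i,1],1}]=0$ and $[\pi^{+i}_{2,-l},x^{+}_{[i,1],0}]=0$ and then applying $\ad(h_{[i,1],k+1})$, resp.\ $\ad(h_{[i,1],k})$, which commute with $\pi^{-i}_{2,-l}$ by the $[\pi^{+i}_{l_1,k_1},\pi^{-i}_{l_2,k_2}]=0$ lemma. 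After this substitution the statement reduces to the already-proved Drinfeld relation for $[h_{[i,1],l},x^{\pm}_{[i,1],k}]$ in Corollary \ref{Cor relations in a tube}, and the scalar $\frac{-1}{v^{l}+v^{-l}}\cdot\frac{[2l]}{l}=\frac{[-l]}{l}$ yields exactly the stated coefficients. I would rework your base case along these lines rather than attempting the comultiplication route.
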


\begin{proof}
(1). We have shown that $[\pi^{+i}_{2,-l},x^-_{[i,1],1}]=0$. Similarly we have
$$[\pi^{-i}_{2,-l},x^+_{[i,1],-1}]=0.$$

Apply $\ad(h_{[i,1],k+1})$ to the above formula, we have $[\pi^{-i}_{2,-l},x^+_{[i,1],k}]=0$.

So we can deduce that
\begin{equation*}
[h_{*,l},x^+_{[i,1],k}]=[\pi^{-i}_{1,-l},x^+_{[i,1],k}]=\frac{-1}{v^l+v^{-l}}[h_{[i,1],l},x^+_{[i,1],k}]=\frac{1}{l}[-l]x^+_{[i,1],k+l}.
\end{equation*}

(2). We have shown that $[\pi^{+i}_{2,-l},x^+_{[i,1],0}]=0$. Similarly we have
$$[\pi^{-i}_{2,-l},x^-_{[i,1],0}]=0.$$

Apply $\ad(h_{[i,1],k})$ to the above formula, we have $[\pi^{-i}_{2,-l},x^-_{[i,1],k}]=0$.

Hence we have
\begin{equation*}
[h_{*,l},x^-_{[i,1],k}]=[\pi^{-i}_{1,-l},x^-_{[i,1],k}]=\frac{-1}{v^l+v^{-l}}[h_{[i,1],l},x^-_{[i,1],k}]=-\frac{1}{l}[-l]x^-_{[i,1],k+l}.
\end{equation*}
\end{proof}

\begin{Lem}
(1). $[h_{*,l},x^+_{[i,m],k}]=0$, for $1\leq i\leq n, 2\leq m\leq p_i-1$, $l<0$ and $k\geq 0$.

(2). $[h_{*,l},x^-_{[i,m],k}]=0$, for $1\leq i\leq n,2\leq m\leq p_i-1$, $l<0$ and $k\geq 1$.
\end{Lem}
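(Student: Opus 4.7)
The plan is to prove both parts by induction on $k$, with the inductive step being a routine Jacobi argument and the base cases ($k=0$ for part (1), $k=1$ for part (2)) handled by a direct Drinfeld-double calculation that exploits the support structure of the tubes.

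\textbf{Inductive step.} For part (1) with $k\geq 1$, use $x^+_{[i,m],k}=\frac{k}{[2k]}[h_{[i,m],k},x^+_{[i,m],0}]$ and expand by the Jacobi identity:
\begin{equation*}
[h_{*,l},x^+_{[i,m],k}]=\frac{k}{[2k]}\Bigl([[h_{*,l},h_{[i,m],k}],x^+_{[i,m],0}]+[h_{[i,m],k},[h_{*,l},x^+_{[i,m],0}]]\Bigr).
\end{equation*}
The first summand vanishes since all $h$-elements commute (the opposite-sign case $[h_{*,l},h_{[i,m],k}]=0$ being handled in the earlier subsection on $[h_{s,r},h_{t,m}]=0$ for $rm<0$), and the second vanishes by the base case. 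The inductive step for part (2) with $k\geq 2$ is identical, using $x^-_{[i,m],k}=\frac{-(k-1)}{[2(k-1)]}[h_{[i,m],k-1},x^-_{[i,m],1}]$ and the base case $k=1$.

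\textbf{Base case for part (1).} Decompose $h_{*,l}=-T^-_{-l}=-\sum_{x\in\mathbb{X}}\mathbf{h}^-_{-l,x}$. For each closed point $x\neq\lambda_i$, every summand of $\mathbf{h}^-_{-l,x}$ has the form $u^-_A$ with $A\in\mathscr{T}_x$, and since $\mathscr{T}_x$ is extension-closed, every sub- and quotient-object of $A$ again lies in $\mathscr{T}_x$; in particular none is isomorphic to the simple sheaf $S^i_m\in\mathscr{T}_{\lambda_i}$. Applying the Drinfeld-double commutation relation of subsection~\ref{subsec double Hall alg} to $u^-_A$ and the primitive $u^+_{S^i_m}$ (whose coproduct is $u^+_{S^i_m}\otimes 1+K_{[i,m]}\otimes u^+_{S^i_m}$), all pairings involving nontrivial comultiplicative pieces vanish because the only surviving isoclass would be $S^i_m$, which never occurs. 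Only the trivial pieces $B=0$ and $A'=0$ in $\Delta(u^-_A)$ contribute, and the remaining torus scalars cancel via $(S^i_m,[A])=(-l)(S^i_m,\delta)=0$ from Lemma~\ref{lem comute Euler form}. This gives $[\mathbf{h}^-_{-l,x},u^+_{S^i_m}]=0$. For the contribution $\mathbf{h}^-_{-l,\lambda_i}$, each summand $u^-_{\mathscr{F}}$ has $\Top(\mathscr{F})\subseteq\bigoplus S^i_0$ and $\soc(\mathscr{F})\subseteq\bigoplus S^i_1$ (as in the proof of Lemma~\ref{lem hsi}); consequently, $S^i_m$ appears neither as a sub- nor as a quotient-object of $\mathscr{F}$ when $m\geq 2$, and the same Drinfeld-double pairing analysis yields $[\mathbf{h}^-_{-l,\lambda_i},u^+_{S^i_m}]=0$.

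\textbf{Base case for part (2) and main obstacle.} Writing $\eta^+_{i,m}=c_{i,m}\Theta_{\lambda_i}(\xi_m)K_{[i,m]}$ where $c_{i,m}$ is a scalar and $\xi_m$ is a linear combination of $u^+_M$ for $M$ in the prescribed set with dimension vector $\delta-[S^i_m]$ and socle contained in $\bigoplus_{s\leq m+1}S^i_s$ (subsection~\ref{subsec main theorem}), the identity $(l\delta,\alpha_{im})=0$ makes $K_{[i,m]}$ commute with $h_{*,l}$, reducing the claim to $[h_{*,l},\Theta_{\lambda_i}(\xi_m)]=0$. The contributions from $x\neq\lambda_i$ vanish exactly as in the base case for part (1). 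The contribution from $\lambda_i$ requires a careful top/socle analysis of the modules $M$ appearing in $\xi_m$ inside $\rep_0(C_{p_i})$, combined with the corresponding analysis of sub- and quotient-objects of the sheaves $\mathscr{F}$ in $\mathbf{h}^-_{-l,\lambda_i}$, to rule out any matching isoclass that would yield a nontrivial Drinfeld-double pairing. This cyclic-quiver bookkeeping is the main technical obstacle; once completed, the rest of the argument is formal.
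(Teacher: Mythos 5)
Your inductive step is sound and is essentially the same formal Jacobi argument the paper uses, but your choice of base cases creates a genuine gap that the paper deliberately avoids. The paper's entire proof is: apply $\ad(h_{[i,m],k+1})$ to $[h_{*,l},x^+_{[i,m],-1}]=0$ and $\ad(h_{[i,m],k})$ to $[h_{*,l},x^-_{[i,m],0}]=0$. The point of choosing $x^+_{[i,m],-1}=\eta^-_{i,m}$ and $x^-_{[i,m],0}=-vu^-_{S^i_m}$ as base cases is that these, like $h_{*,l}=-T^-_{-l}$ for $l<0$, live entirely in the negative Hall algebra $\mathcal{H}^-(\Coh(\mathbb{X}))$, so both base identities are the formal duals of relations already proved in the positive part (Lemma \ref{lem hsi} and Lemma \ref{lem h_star x- jgeq2}); no Drinfeld-double pairing computation is needed anywhere. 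You instead anchor the induction at $x^+_{[i,m],0}$ and $x^-_{[i,m],1}=\eta^+_{i,m}$, which are positive-part elements, forcing a genuinely mixed computation in $\dh(\Cohx)$.

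Your mixed base case for part (1) does go through: since $S^i_m$ is simple with primitive-type coproduct, the cross terms in the relation $D(u^-_A,u^+_{S^i_m})$ require $S^i_m$ to occur as a subobject (right-hand side) or quotient (left-hand side) of $A$, which is excluded by the support and top/socle constraints, and $(\alpha_{im},l\delta)=0$ kills the residual torus scalar. But the base case for part (2), which you defer as ``cyclic-quiver bookkeeping,'' is not bookkeeping: the strategy of ruling out every matching isoclass fails. The cross terms in $D(u^-_{\mathscr{F}},u^+_M)$ are indexed by common quotients (one side) and common subobjects (the other side) of $\mathscr{F}$ and $M$, where $\mathscr{F}$ has $\Top(\mathscr{F})\in\mathrm{add}(S^i_0)$ and $\soc(\mathscr{F})\in\mathrm{add}(S^i_1)$, while $M\in\mathcal{M}_{m+1,\delta-e_m}$ has dimension vector with a $1$ at vertex $0$ and socle allowed to contain $S^i_1$. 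For many such $M$ the simple $S^i_0$ is a direct summand of $\Top(M)$ and $S^i_1$ of $\soc(M)$, so $S^i_0$ (resp.\ $S^i_1$) is a common quotient (resp.\ common subobject) and the corresponding pairings $\varphi$ are nonzero. Hence the cross terms do not vanish individually and you would need to exhibit cancellation across the whole sum defining $\eta^+_{i,m}$ and $\mathbf{h}^-_{-l,\lambda_i}$ --- a substantially harder argument that you have not supplied. The fix is simply to re-anchor the induction at the negative-part elements $x^+_{[i,m],-1}$ and $x^-_{[i,m],0}$ as the paper does.
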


\begin{proof}
For the first equation, apply $\ad(h_{[i,m],k+1})$ to
$[h_{*,l},x^+_{[i,m],-1}]=0$. And for the second one, just apply
$\ad(h_{[i,m],k})$ to $[h_{*,l},x^-_{[i,m],0}]=0$.
\end{proof}

\begin{Lem}
$[h_{[i,1],l},x^+_{*,k}]=\frac{1}{l}[-l]x^+_{*,k+l}$, for $1\leq i\leq n,l<0$ and
$k\in\mathbb{Z}$.
\end{Lem}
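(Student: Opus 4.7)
The plan is to mirror the strategy used for the case $l>0$ in Lemma \ref{lem h_i1 x^+_ast}, but adapted to the negative part of the double Hall algebra. Since for $l<0$ the element $h_{[i,1],l}=-\pi^{-i}_{2,-l}+(v^l+v^{-l})\pi^{-i}_{1,-l}$ lies in $\mathbf{H}^-(\Coh(\mathbb{X}))$ while $x^+_{*,k}=u^+_{\mathscr{O}(k\vec{c})}$ lies in the positive part, the bracket $[h_{[i,1],l},x^+_{*,k}]$ is a genuine cross-bracket in $\mathbf{DH}(\Coh(\mathbb{X}))$ and must be computed through the Drinfeld-double relations $D(u^-_\alpha,u^+_\beta)$. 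Numerically the target $\tfrac{1}{l}[-l]\,x^+_{*,k+l}$ agrees with the right-hand side in Lemma \ref{lem h_i1 x^+_ast} since $[-l]=-[l]$.

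First I would introduce the negative counterpart of the element $\xi^i_r$ from the proof of Lemma \ref{lem h_i1 x^+_ast}, namely
$$\zeta^i_r:=[x^+_{[i,1],-r+1},x^-_{[i,1],0}]\,K_{[i,1]},$$
which by Corollary \ref{Cor relations in a tube} equals $-(v-v^{-1})^{-1}\varphi_{[i,1],-r+1}\,K_{[i,1]}$ for $r\ge 1$. Expanding $\varphi_{[i,1],-r+1}$ via its defining generating series yields a recursion
$$r\,h_{[i,1],-r}=r\,\zeta^i_r+\sum_{s=1}^{r-1}(v-v^{-1})\,s\,h_{[i,1],-s}\,\zeta^i_{r-s},$$
entirely analogous to formula $(*1)$ in the proof of Lemma \ref{lem h_i1 x^+_ast}. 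The main step is then to establish the negative analog of $(*2)$, a closed formula of the shape
$$\zeta^i_r\,u^+_{\mathscr{O}(k\vec{c})}=u^+_{\mathscr{O}(k\vec{c})}\,\zeta^i_r+\sum_{s=1}^{r-1}c_s\,u^+_{\mathscr{O}((k+s)\vec{c})}\,\zeta^i_{r-s}+c_r\,u^+_{\mathscr{O}((k+r)\vec{c})}$$
for explicit scalars $c_s$. Expanding $\zeta^i_r$ via its definition (and using $x^-_{[i,1],0}=-vu^-_{S^i_1}$ together with the presentation of $x^+_{[i,1],-r+1}$ in terms of $\eta^-_{i,1}$ and iterated brackets with the $h_{[i,1],-s}$), this reduces to repeatedly commuting $u^-_{S^i_1}$ past $u^+_{\mathscr{O}(k'\vec{c})}$ via the Drinfeld-double relation $D(u^-_{S^i_1},u^+_{\mathscr{O}(k'\vec{c})})$. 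Once the analog of $(*2)$ is in place, combining it with the recursion above and inducting on $r$ yields $[h_{[i,1],l},x^+_{*,k}]=\tfrac{1}{l}[-l]\,x^+_{*,k+l}$ for every $l<0$.

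The principal obstacle is the Drinfeld-pairing computation driving the analog of $(*2)$: one must evaluate $\varphi(b_j^{(1)+},a_i^{(2)-})$ for all terms occurring in $\Delta(u^+_{\mathscr{O}(k\vec{c})})$ and $\Delta(u^-_{S^i_1})$. Here I would invoke the vanishing mechanism already exploited in Lemma \ref{lem comult considered 1}: for any short exact sequence $0\to\mathscr{O}(\vec{x})\to\mathscr{O}(k\vec{c})\to A\to 0$ with $\vec{x}\ne s\vec{c}$, one has $\Hom(\mathscr{O}(\vec{x}),S^j_0)=\Ext^1(S^j_1,\mathscr{O}(\vec{x}))=0$ for some $j$, which kills the corresponding pairing contribution; dually, sub/quotient components of $u^-_{S^i_1}$ supported outside the tube $\mathscr{T}_{\lambda_i}$ drop out. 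After this collapse only the diagonal line-bundle-plus-torsion contributions survive, and the remaining bookkeeping of $v$-powers runs along the same lines as in the proof of Lemma \ref{lem h_i1 x^+_ast}.
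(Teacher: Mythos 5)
Your overall skeleton is the right one and matches the paper's: expand $h_{[i,1],l}$ ($l<0$) through the generating series of $\varphi_{[i,1],\cdot}$ into a recursion in elements of the form $[x^+_{[i,1],a},x^-_{[i,1],b}]K_{[i,1]}$ with $a+b=l$, then prove a closed commutation formula with $x^+_{*,k}$ by induction and feed it back into the recursion. The gap is in the choice of splitting $a+b=l$. You take $\zeta^i_r=[x^+_{[i,1],-r+1},x^-_{[i,1],0}]K_{[i,1]}$, so that for $r\geq 2$ \emph{both} factors lie in $\mathcal{H}^-(\Coh(\mathbb{X}))$, and the entire inductive step becomes a genuine Drinfeld-double cross-computation. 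The paper instead sets $\xi^i_l=[x^+_{[i,1],-l},x^-_{[i,1],2l}]K_{[i,1]}$ with $-l>0$ and $2l<0$: the first factor is a \emph{positive} torsion-sheaf element, whose interaction with $u^+_{\mathscr{O}(k\vec{c})}$ is exactly the already-established relation (d) of 6.1 (i.e.\ \cite{S} 4.13), and the only cross-term ever needed is the single vanishing $[x^-_{[i,1],2l},x^+_{*,k}]=0$, proved independently in 8.4. This choice is what makes the induction close; your choice forfeits that advantage.

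Concretely, the mechanism you propose for producing the twist --- reducing to the double relation $D(u^-_{S^i_1},u^+_{\mathscr{O}(k'\vec{c})})$ --- cannot work. Since $\Hom(\mathscr{O}(k'\vec{c}),S^i_j)\neq 0$ only for $j\equiv 0\ (\modl\ p_i)$, there is no epimorphism $\mathscr{O}(k'\vec{c})\twoheadrightarrow S^i_1$, and no line subbundle has class $[S^i_1]$; hence every nontrivial pairing term in $D(u^-_{S^i_1},u^+_{\mathscr{O}(k'\vec{c})})$ vanishes and $u^-_{S^i_1}$ merely $v$-commutes with $u^+_{\mathscr{O}(k'\vec{c})}$ --- no twisted line bundle appears. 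The twist would have to come from commuting the other factor $x^+_{[i,1],-r+1}$ (built from $\eta^-_{i,1}$, i.e.\ sums of $u^-_M$ with $[M]=\delta-[S^i_1]$, which do interact with line bundles through their $S^i_0$-constituents) past $u^+_{\mathscr{O}(k\vec{c})}$; that computation is not addressed, and it is dangerously circular, since in the paper the commutation of $x^+_{[i,1],t}$, $t<0$, with $x^+_{*,k}$ is \emph{derived} later by applying $\ad(h_{[i,1],\cdot})$, i.e.\ by using the present lemma. Two smaller defects: your closed formula is not homogeneous for the $\widehat{Q}$-grading (the shifts must be $u^+_{\mathscr{O}((k-s)\vec{c})}$, not $u^+_{\mathscr{O}((k+s)\vec{c})}$; compare degrees $\alpha_*+(k+1-r)\delta$ on the two sides), and $\zeta^i_r$ corresponds to $\varphi_{[i,1],-r+1}$ rather than $\varphi_{[i,1],-r}$, so the recursion as written is off by one.
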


\begin{proof}
Set $\xi^i_l=[x^+_{[i,1],-l},x^-_{[i,1],2l}]K_{[i,1]}$. We
have shown that
$$[x^+_{[i,1],-l},x^-_{[i,1],2l}]=\frac{\varphi_{[i,1],l}}{v-v^{-1}}.$$

By the definition of $\varphi$ (see \ref{subsec quantum loop alg} (7)), we have for any $r<0$,
$$-rh_{[i,1],r}=-r\xi^i_l+\sum^{-r-1}_{s=1}(v-v^{-1})sh_{[i,1],-s}\xi^i_{r+s}.\eqno(\ast 3)$$

We claim that the following identity holds:
\begin{equation*}
\begin{split}
\xi^i_lx^+_{*,k}&=x^+_{*,k}\xi^i_l+(v-v^{-1})x^+_{*,k-1}\xi^i_{l+1}
+v(v-v^{-1})x^+_{*,k-2}\xi^i_{l+2}+\\&\cdots
+v^{(-r-2)}(v-v^{-1})x^+_{*,k+l+1}\xi^i_{-1} -v^{(-r-1)}x^+_{*,k+l}.\ \ \ \
\ \ \ \ \ \ \ \ \ \ \ \ \ \ \ \ \ \ \ \ \ \ \ \ (\ast 4)
\end{split}
\end{equation*}

We prove the claim by induction. When $l=-1$, we have
\begin{equation*}
\begin{split}
\xi^i_{-1}x^+_{*,k}=&[x^+_{[i,1],0},x^-_{[i,1],-1}]K_{[i,1]}x^+_{*,k}\\
&=v^{-1}[x^+_{[i,1],0},x^-_{[i,1],-1}]x^+_{*,k}K_{[i,1]}\\
&=x^+_{*,k}\xi^i_{-1}
-v^{-1}x^+_{*,k-1}\xi^i_{0}+v\xi^i_{0}x^+_{*,k-1}\\
&=x^+_{*,k}\xi^i_{-1}+x^+_{*,k-1}.
\end{split}
\end{equation*}

Now assume ($\ast 4$) holds for $l=-r+1$. We will prove the case $l=-r$. By \cite{S} 4.13 and $[x^-_{[i,1],2l},x^+_{*,k}]=0$,
which will be proved in Lemma \ref{lem proved later} independent of this
lemma, we deduce that
\begin{equation*}
\begin{split}
\xi^i_lx^+_{*,k}&=[x^+_{[i,1],-l},x^-_{[i,1],2l}]K_{[i,1]}x^+_{*,k}\\
&=v^{-1}[x^+_{[i,1],-l},x^-_{[i,1],2l}]x^+_{*,k}K_{[i,1]}\\
&=v^{-1}(x^+_{[i,1],-l}x^-_{[i,1],2l}x^+_{*,k}-x^-_{[i,1],2l}x^+_{[i,1],-l}x^+_{*,k})K_{[i,1]}\\
&=v^{-1}(x^+_{[i,1],-l}x^+_{*,k}x^-_{[i,1],2l}-x^-_{[i,1],2l}x^+_{[i,1],-l}x^+_{*,k})K_{[i,1]}\\
&=v^{-1}((vx^+_{[i,1],-l+1}x^+_{*,k-1}+vx^+_{*,k}x^+_{[i,1],-l}-x^+_{*,k-1}x^+_{[i,1],-l+1})x^-_{[i,1],2l}\\
&\ \ \ \ \ \ \-x^-_{[i,1],2l}(vx^+_{[i,1],-l+1}x^+_{*,k-1}+vx^+_{*,k}x^+_{[i,1],-l}-x^+_{*,k-1}x^+_{[i,1],-l+1}))K_{[i,1]}\\
&=x^+_{*,k}\xi^i_l-v^{-1}x^+_{*,k-1}\xi^i_{l+1}+v\xi^i_{l+1}x^+_{*,k-1}.
\end{split}
\end{equation*}

The lemma is now a consequence of ($\ast 3$) and ($\ast 4$).
\end{proof}

\begin{Lem}
$[h_{[i,m],l},x^+_{*,k}]=0$, for $1\leq i\leq n,2\leq m\leq p_i-1$, $l<0$ and
$k\in\mathbb{Z}$.
\end{Lem}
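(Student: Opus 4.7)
The plan is to follow the template of the immediately preceding lemma (the $m=1$ case) and of Lemma 7.8(2) (the positive-$l$ analogue), transported to the present negative-index, non-adjacent setting. Mimicking the key construction $\xi^i_l = [x^+_{[i,1],-l}, x^-_{[i,1],2l}]K_{[i,1]}$, I introduce
\begin{equation*}
\xi^{i,m}_l := [x^+_{[i,m],-l}, x^-_{[i,m],2l}]\, K_{[i,m]}.
\end{equation*}
By the Drinfeld relations already verified inside the subalgebra isomorphic to $U_v(\widehat{\mathfrak{sl}}_{p_i})$ (Corollary 6.2), $\xi^{i,m}_l$ equals $\phi_{[i,m],l}/(v-v^{-1})$ up to a power of $K_{[i,m]}$. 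The exponential generating function relating $\phi_{[i,m],\cdot}$ to $h_{[i,m],\cdot}$ yields a recursion of the shape $-l \cdot h_{[i,m],l} = -l \cdot \xi^{i,m}_l + \sum_{s=1}^{-l-1}(v-v^{-1})\, s\, h_{[i,m],-s}\,\xi^{i,m}_{l+s}$, exactly analogous to the formula $(\ast)$ used in the proof of the preceding lemma. By induction on $|l|$, the desired vanishing $[h_{[i,m],l}, x^+_{*,k}] = 0$ reduces to showing $[\xi^{i,m}_l, x^+_{*,k}]=0$ for all $l<0$.

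Since $(\alpha_{[i,m]},\alpha_*)=0$ for $m\geq 2$, the torus element $K_{[i,m]}$ commutes with $x^+_{*,k}$. Hence the Jacobi identity reduces $[\xi^{i,m}_l, x^+_{*,k}]=0$ to the two vanishings
\begin{equation*}
[x^+_{[i,m],r}, x^+_{*,k}] = 0, \qquad [x^-_{[i,m],s}, x^+_{*,k}] = 0 \qquad (m\geq 2,\ r,s\in\mathbb{Z}).
\end{equation*}
The first, for $r\geq 0$, is Lemma 7.8(1); to reach $r<0$ I would iteratively apply $\ad(h_{[i,m],l'})$ with $l'<0$ to the base element $x^+_{[i,m],-1}=\eta^{-}_{i,m}$, using the definitional formulas of Theorem 5.6 and invoking the current lemma (with smaller $|l'|$) as an inductive hypothesis. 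For the second vanishing, the case $s\geq 1$ is the last lemma of Section 7; the base case $s=0$ reduces to $[u^-_{S^i_m},\,u^+_{\mathscr{O}(k\vec{c})}]=0$ in $\dh(\Cohx)$, which I would verify by a direct comultiplication computation using the Drinfeld double relation, noting (as highlighted in the Introduction) that most terms in the comultiplication of $u^+_{\mathscr{O}(k\vec{c})}$ do not contribute because any non-trivial subobject would force a non-zero morphism $\mathscr{O}(\vec{x}) \hookrightarrow \mathscr{O}(k\vec{c})$ incompatible with the torsion-sheaf pairing against $S^i_m$ for $m\geq 2$. The remaining cases $s<0$ then follow again by $\ad(h_{[i,m],l'})$.

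The main obstacle is the mutual interdependence of the three statements: our target lemma depends on $[x^\pm_{[i,m],r}, x^+_{*,k}]=0$ for general $r$, while extending those to negative indices in turn requires our lemma for smaller $|l'|$. The proof must therefore be organized as a simultaneous stratified induction on $\max(|l|,|r|,|s|)$, with the base layer supplied by the preceding lemmas of Section 7 (the $m=1$ case of $[h_{[i,1],l},x^+_{*,k}]$ and the identity $[x^-_{[i,j],l},x^+_{*,k}]=0$ for $l\geq 1$), by the Drinfeld relations in the tube (Corollary 6.2), and by the direct $s=0$ comultiplication check in $\dh(\Cohx)$. Once the inductive scaffolding is set up, the calculations at each step are minor variants of those already executed in the preceding lemmas of this section.
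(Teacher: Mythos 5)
Your high-level skeleton is the same as the paper's: pass from $h_{[i,m],l}$ to $\varphi_{[i,m],l}$ via the exponential generating function, write $\varphi_{[i,m],l}$ as a commutator $[x^+_{[i,m],a},x^-_{[i,m],b}]$ with $a+b=l$, and reduce to the vanishing of $[x^{\pm}_{[i,m],\cdot},x^+_{*,k}]$. But your execution has a genuine gap: the ``simultaneous stratified induction on $\max(|l|,|r|,|s|)$'' does not close. With your choice $\xi^{i,m}_l=[x^+_{[i,m],-l},x^-_{[i,m],2l}]K_{[i,m]}$ (copied from the preceding lemma, where it serves a different purpose), the target at level $|l|=N$ requires $[x^-_{[i,m],2l},x^+_{*,k}]=0$ at level $2N$, and your route to negative-index $x^{\pm}_{[i,m]}$ goes through $\ad(h_{[i,m],l'})$ at the \emph{same} vertex, i.e.\ through instances of the very statement being proved at levels $\geq N$. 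In addition, your base case $[x^+_{[i,m],-1},x^+_{*,k}]=[\eta^-_{i,m},x^+_{*,k}]=0$ is a new Drinfeld-double computation (note $\langle[\mathscr{O}(k\vec{c})],\delta-\alpha_{[i,m]}\rangle=1$, so the relevant Hom space is nonzero and the commutativity is not automatic); it is nowhere established in the paper and you do not carry it out.

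The paper avoids both problems by inducting on the branch position $m$ rather than on $|l|$, and by using the \emph{adjacent} vertex. Concretely, it applies $\ad(h_{[i,m-1],l-1})$ to the known identity $[x^+_{[i,m],0},x^+_{*,k'}]=0$: the commutator $[h_{[i,m-1],l-1},x^+_{*,k}]$ is already controlled (by the previous lemma when $m-1=1$, by the inductive hypothesis on $m$ when $m-1\geq 2$) and kills the unwanted Jacobi term, while $[h_{[i,m-1],l-1},x^+_{[i,m],0}]$ is a nonzero multiple of $x^+_{[i,m],l-1}$ by the in-tube Drinfeld relations of Corollary 6.2. This produces $[x^+_{[i,m],l-1},x^+_{*,k}]=0$ for arbitrary $l<0$ in a single step, with no induction on $|l|$ and no appeal to $\eta^-_{i,m}$. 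Pairing with $[x^-_{[i,m],1},x^+_{*,k}]=0$ (Lemma \ref{lem hsi-} and its extension in Section \ref{sec relation of positive part}) gives $[\varphi_{[i,m],l},x^+_{*,k}]=0$, whence the claim. If you want to salvage your scheme, replace your $\xi^{i,m}_l$ by $[x^+_{[i,m],l-1},x^-_{[i,m],1}]K_{[i,m]}$ and obtain the negative-index $x^+_{[i,m],l-1}$ from the adjacent $h_{[i,m-1],l-1}$ exactly as above; as written, your induction is circular.
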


\begin{proof}
When $m=2$, we have proved that
$$[x^-_{[i,2],1},x^+_{*,k}]=0,\ [x^+_{[i,2],0},x^+_{*,k}]=0.$$

Apply $\ad(h_{[i,1],l-1})$ to the second formula, we get
$$[x^+_{[i,2],l-1},x^+_{*,k}]=0.$$

Then we have $[\varphi_{[i,2],l},x^+_{*,k}]=0$, which is the same as $[h_{[i,2],l},x^+_{*,k}]=0$.

Assume for any $m<p_{i}$ the relation $[x^+_{[i,m],l},x^+_{*,k}]=0$
holds. We shall prove for the case $m+1$.

Note that the following still holds:
$$[x^-_{[i,m+1],1},x^+_{*,k}]=0,\ [x^+_{[i,m+1],0},x^+_{*,k}]=0$$

Now apply $\ad(h_{[i,m],l-1})$ to the second formula, we get
$$[x^+_{[i,m+1],l-1},x^+_{*,k}]=0.$$

Then we deduce that $[\varphi_{[i,m+1],l},x^+_{*,k}]=0$, which is the same as $[h_{[i,m+1],l},x^+_{*,k}]=0$.
\end{proof}

\subsection{}
Next we consider the relation
$$[x^+_{s,k},x^-_{t,l}]=\delta_{st}\frac{\psi_{s,k+l}-\varphi_{s,k+l}}{v-v^{-1}}.$$

\begin{Lem}\label{lem comult considered 2}
For any $k,l$ such that $k+l\geq 0$,
$$[x^+_{*,k},x^-_{*,l}]=\frac{\psi_{*,k+l}-\varphi_{*,k+l}}{v-v^{-1}}.$$
\end{Lem}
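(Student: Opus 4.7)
The plan is to apply the defining Drinfeld double relation $D(u^-_{\mo(-l\vec{c})}, u^+_{\mo(k\vec{c})})$ and show that almost all pairings between the two comultiplications vanish, leaving an expression that one can match with the $\psi$--generating series. First, I write out the two comultiplications. In $\Delta(u^+_{\mo(k\vec{c})})$ the middle terms arise from short exact sequences $0\to \mo(\vec{x})\to \mo(k\vec{c})\to A\to 0$ where the sub is a line bundle $\mo(\vec{x})$ and the quotient $A$ is torsion; by the lemma analogous to~\ref{lem comult considered 1}, among these middle terms the surviving ones (i.e.\ those that pair nontrivially in the Drinfeld double relation) are precisely those of the form
$\theta_{*,r}K_{\alpha_*+(k-r)\delta}\otimes u^+_{\mo((k-r)\vec{c})}$
with $\vec{x}=(k-r)\vec{c}$. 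A dual analysis applies to $\Delta(u^-_{\mo(-l\vec{c})})$.

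Next, I substitute the two comultiplications into the Drinfeld double relation. The skew-Hopf pairing $\varphi(K_\mu u^+_\alpha, K_\nu u^-_\beta)$ is supported only on $\alpha=\beta$, so only a very restricted set of matchings can contribute: either (i) the ``quotient'' factor of one comult pairs with the ``quotient'' factor of the other (both being torsion sheaves that must be isomorphic), or (ii) a sub-line-bundle factor of one pairs with a sub-line-bundle factor of the other. Using exactly the pull-back/Hom--Ext vanishing argument of Lemma~\ref{lem comult considered 1}, with the r\^oles of the two line bundles $\mo(k\vec{c})$ and $\mo(-l\vec{c})$ played symmetrically, one sees that the ``rest'' middle terms (those involving sheaves with a non-line-bundle component that is not aligned with $\vec{c}$) cannot survive either side of the relation and so can be discarded.

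What remains, after collecting the surviving matchings and using the explicit pairing values on $K$--elements together with $\varphi(\theta_{*,r},h_{*,r})=[2r]/r$, is an identity of the form
\begin{equation*}
[x^+_{*,k},x^-_{*,l}] \;=\; \frac{1}{v-v^{-1}}\Bigl(\theta_{*,k+l}\,K_{\alpha_*+k\delta}K^{-1}_{\alpha_*-l\delta}\;-\;\delta_{k+l,0}\,K^{-1}_*\Bigr)\;+\;\cdots
\end{equation*}
in which the leading contribution comes from the pairing of type~(i) and the Kronecker term from the pairing of type~(ii) (which is nontrivial only when $k+l=0$, matching the line-bundle classes). Comparing with the generating-function definitions
\[
\sum_{k\ge0}\psi_{*,k}u^k \;=\; K_*\exp\Bigl((v-v^{-1})\sum_{k\ge1}h_{*,k}u^k\Bigr),\qquad \sum_{k\le0}\varphi_{*,k}u^{-k}=K_*^{-1}\exp\bigl(-(v-v^{-1})\sum_{k\ge1}h_{*,-k}u^k\bigr),
\]
and using the hypothesis $k+l\ge 0$ (so $\varphi_{*,k+l}=0$ unless $k+l=0$, in which case $\varphi_{*,0}=K_*^{-1}$), the right-hand side collapses to $(\psi_{*,k+l}-\varphi_{*,k+l})/(v-v^{-1})$, as claimed.

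The main obstacle is Step~3, the verification that the ``rest'' middle terms in the two comultiplications contribute nothing. This is where the geometry of $\Cohx$ enters: one must check that for any extension of $\mo(k\vec{c})$ (or $\mo(-l\vec{c})$) by a torsion sheaf that is not of the shape $\mo((k-r)\vec{c})/\mo((-l+r)\vec{c})$, a key $\Hom$ or $\Ext^1$ vanishes so that the corresponding $g^\gamma_{\alpha\beta}$--coefficient cannot contribute. Once this bookkeeping is done, the identification of the surviving sum with $\theta_{*,k+l}$ (up to the normalization of the pairing and the sign $-v$ in the definition $x^-_{*,l}=-v u^-_{\mo(-l\vec{c})}$) is a mechanical comparison of generating functions.
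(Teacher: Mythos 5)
Your overall strategy coincides with the paper's: apply the relation $D(u^-_{\mo(-l\vec{c})},u^+_{\mo(k\vec{c})})$, discard the ``rest terms'' of the two comultiplications, and read off the commutator from the generating series for $\psi_{*,r}$ and $\varphi_{*,r}$ (including the $\varphi_{*,0}=K_*^{-1}$ contribution when $k+l=0$). The gap is exactly at the step you flag as the main obstacle: the discarding of the rest terms, which your argument does not actually justify. You describe the pairings that occur as (i) torsion quotient against torsion quotient and (ii) sub-line-bundle against sub-line-bundle. Under that matching the rest terms do \emph{not} die. For example, when $k+l\geq 0$ the line bundle $\mo(-l\vec{c}-\vec{x}_1)$ is a common subsheaf of $\mo(k\vec{c})$ and of $\mo(-l\vec{c})$, so a type-(ii) pairing between two rest terms is nonzero; dually, $\mo(k\vec{c})/\mo(k\vec{c}-\vec{x}_1)\cong S^1_{0}\cong \mo(-l\vec{c})/\mo(-l\vec{c}-\vec{x}_1)$, so a type-(i) pairing between rest terms is nonzero as well. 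The pull-back/Hom--Ext argument of Lemma \ref{lem comult considered 1} is the wrong tool to rule these out: there one had to show that a quotient $A=\mo(k\vec{c})/\mo(\vec{x})$ with $\vec{x}\neq s\vec{c}$ is not isomorphic to (a subobject of) the very special torsion sheaves occurring in $h_{*,l}$, and the shape of those sheaves was essential. Here both members of a type-(i) pairing are arbitrary quotients of a line bundle by a line bundle, and they can perfectly well be isomorphic. As written, your relation would therefore carry uncontrolled extra terms $u^-_{\mo(\vec{y})}u^+_{\mo(\vec{x})}$ with $\vec{x},\vec{y}$ not multiples of $\vec{c}$.

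The paper's own proof disposes of the rest terms by a different and much more elementary observation: it writes both coproducts with the finite-length factor in the first tensor slot and the line-bundle factor in the second, so that each pairing occurring in $D(u^-,u^+)$ opposes a finite-length piece of one coproduct to a line-bundle piece of the other. Since a nonzero torsion sheaf is never isomorphic to a line bundle, and $\mo(\vec{x})\not\cong\mo(s\vec{c})$ for $\vec{x}\neq s\vec{c}$, every pairing involving a rest term vanishes identically; no $\Hom$- or $\Ext$-vanishing and no pull-back diagram enter at all (in contrast with Lemma \ref{lem comult considered 1}, where such an argument really is needed because torsion is paired against torsion). To repair your proposal you must either adopt this bookkeeping of slots, or else prove that the nonvanishing type-(i) and type-(ii) rest contributions exhibited above cancel between the two sides of the relation; neither is done in what you wrote, and the appeal to the ``symmetric'' Hom--Ext argument does not substitute for it.
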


\begin{proof}
Recall the comultiplication:
$$\Delta(u_{\mathscr{O}(k\vec{c})})=
u_{\mathscr{O}(k\vec{c})}\otimes
1+\sum^\infty_{r=0}\theta_{*,r}K_{\alpha_*+(k-r)\delta}\otimes
u_{\mathscr{O}((k-r)\vec{c})}+\text{remaining terms}.$$

$$\Delta(u_{\mathscr{O}(-l\vec{c})})=
u_{\mathscr{O}(-l\vec{c})}\otimes
1+\sum^\infty_{r=0}\theta_{*,r}K_{\alpha_*+(-l-r)\delta}\otimes
u_{\mathscr{O}((-l-r)\vec{c})}+\text{remaining terms}.$$

For any $A_1=u_{B_1}K_{[C_1]}\otimes u_{C_1}$ (resp. $A_2=u_{B_2}K_{[C_2]}\otimes u_{C_2}$) appearing in the remaining
terms of $\Delta(u_{\mathscr{O}(k\vec{c})})$ (resp. $\Delta(u_{\mathscr{O}(-l\vec{c})})$), $B_1$ is a nonzero sheaf of
finite length and $C_2$ is a nonzero line bundle. So they are not
isomorphic to each other. And similarly, $C_1$ is a nonzero line
bundle and $B_{2}$ is a nonzero sheaf of finite length. They are not
isomorphic to each other. Thus we do not need to consider the remaining terms.

Then the lemma can be deduced by the definition of the Drinfeld double.
\end{proof}

\begin{Lem}\label{lem proved later}
$[x^+_{*,k},x^-_{[i,j],l}]=0$, for any $k\in\mathbb{Z}$, $1\leq i\leq n,1\leq j\leq p_i-1$,$l\leq 0$.
\end{Lem}

\begin{proof}
By Lemma \ref{lem hsi-}, we have $[x^+_{*,k},x^-_{[i,j],1}]=0$.

For $j=1$, we have $$[x^+_{*,k},x^-_{[i,1],l}]=0,$$
by applying $\ad(h_{*,l-1})$ to $[x^+_{*,k},x^-_{[i,1],1}]=0$.

For $2\leq j\leq p_{i}-1$, we deduce $[x^+_{*,k},x^-_{[i,j],l}]=0$ by applying $\ad(h_{[i,j-1],l-1})$ to $[x^+_{*,k},x^-_{[i,j],1}]=0$.
\end{proof}

\subsection{}
Now we consider the following relation:
$$x^\pm_{s,k+1}x^\pm_{t,l}-v^{\pm a_{st}}x^\pm_{t,l}x^\pm_{s,k+1}=v^{\pm
a_{st}}x^\pm_{s,k}x^\pm_{t,l+1}-x^\pm_{t,l+1}x^\pm_{s,k}.$$

\begin{Lem}
For any $1\leq i\leq n,l<0$ and $k\in \mathbb{Z}$,
$$x^+_{*,k+1}x^+_{[i,1],l}-v^{-1}x^+_{[i,1],l}x^+_{*,k+1}
=v^{-1}x^+_{*,k}x^+_{[i,1],l+1}-x^+_{[i,1],l+1}x^+_{*,k}.$$
\end{Lem}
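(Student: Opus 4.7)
The plan is to reduce this relation (for $l<0$) to the already-established identity \ref{subsec known relations}(d), which gives the same equation for any $r \geq 0$ and $t \in \mathbb{Z}$. The reduction proceeds by applying $\ad(h_{[i,1],m})$ for $m<0$ to Schiffmann's relation in the initial case $r=0$; $m$ will ultimately play the role of $l$.

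First I would collect two commutation relations. Within the subalgebra $\mathbf{H}(\mathscr{T}_{\lambda_i})\simeq U_v(\widehat{\mathfrak{sl}}_{p_i})$, Corollary \ref{Cor relations in a tube} gives $[h_{[i,1],m},x^+_{[i,1],r}]=\frac{1}{m}[2m]\,x^+_{[i,1],r+m}$ for every $m\neq 0$. On the other hand, Lemma \ref{lem h_i1 x^+_ast} together with its negative counterpart established earlier in this section yield uniformly
$$[h_{[i,1],m},x^+_{\ast,s}]=\frac{1}{m}[-m]\,x^+_{\ast,s+m}$$
for all $m\neq 0$ and $s\in\mathbb{Z}$. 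Thus $\ad(h_{[i,1],m})$ acts as a derivation on each of the generators involved by a clean spectral shift with explicit scalar.

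Next, apply $\ad(h_{[i,1],m})$ with $m<0$ to the $r=0$ instance of \ref{subsec known relations}(d),
$$x^+_{\ast,t+1}\,u_{S^i_1}-v^{-1}u_{S^i_1}\,x^+_{\ast,t+1}=v^{-1}x^+_{\ast,t}\,x^+_{[i,1],1}-x^+_{[i,1],1}\,x^+_{\ast,t}.$$
By Leibniz each of the four products splits into two summands, with scalar prefactors drawn from $\{\tfrac{[2m]}{m},\tfrac{[-m]}{m}\}$. Regrouping according to the prefactor, the resulting identity splits as a sum of two pieces. The piece with factor $\tfrac{[2m]}{m}$ is exactly the desired relation
$$x^+_{\ast,t+1}x^+_{[i,1],m}-v^{-1}x^+_{[i,1],m}x^+_{\ast,t+1}=v^{-1}x^+_{\ast,t}x^+_{[i,1],m+1}-x^+_{[i,1],m+1}x^+_{\ast,t}$$
with $k=t$ and $l=m<0$. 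The piece with factor $\tfrac{[-m]}{m}$ coincides, after the index shift $t\mapsto t+m$, with the $r=1$ case of \ref{subsec known relations}(d) and therefore vanishes identically. Dividing by $\tfrac{[2m]}{m}\neq 0$ yields the claim.

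The only real obstacle is the bookkeeping: one must write out the eight Leibniz terms and verify that they reorganize as the two groups described above. Once the two commutation formulas are at hand this is purely mechanical, and no further information about Hall numbers or about the geometry of $\mathbb{X}$ enters the argument.
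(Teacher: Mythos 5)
Your proof is correct and follows essentially the same route as the paper, which simply applies $\ad(h_{[i,1],\,\cdot\,})$ with a negative index to the $r=0$ instance of relation \ref{subsec known relations}(d) and lets the two Leibniz groups sort themselves out (your choice of index $m=l$ is in fact the right one; the paper's ``$l-1$'' appears to be off by one). One cosmetic slip: the group carrying the factor $\tfrac{[-m]}{m}$ is the $r=0$ case of (d) with $t$ replaced by $t+m$, not the $r=1$ case, but it vanishes for the reason you state.
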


\begin{proof}
We have already known that
$$x^+_{*,k+1}x^+_{[i,1],0}-v^{-1}x^+_{[i,1],0}x^+_{*,k+1}
=v^{-1}x^+_{*,k}x^+_{[i,1],1}-x^+_{[i,1],1}x^+_{*,k}.$$

The required result can be obtained by applying $\ad(h_{[i,1],l-1})$ to the above formula.
\end{proof}

\begin{Lem}
$[x^+_{*,k},x^+_{[i,j],l}]=0$ for $1\leq i\leq n, 2\leq j\leq p_i-1$, $l<0$ and
$k\in\mathbb{Z}$.
\end{Lem}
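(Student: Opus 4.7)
The plan is to imitate the argument of part~(1) of Lemma~\ref{lem x^+_ij x^+ast and h_ij x^+ast j geq 2}, with the previous lemma now playing the role that Lemma~\ref{lem h_i1 x^+_ast} played there. If $p_i\leq 2$ the range $2\leq j\leq p_i-1$ is empty, so we may assume $p_i\geq 3$. Inside the subalgebra $U_i\simeq U_v(\widehat{\mathfrak{sl}}_{p_i})$ all Drinfeld relations have already been verified in Corollary~\ref{Cor relations in a tube}; in particular the intra-tube loop relation
$$[h_{[i,j],l},x^+_{[i,j],0}]=\tfrac{1}{l}[2l]\,x^+_{[i,j],l}$$
holds for every $l\in\mathbb{Z}\setminus\{0\}$, so for every $l<0$ we may write
$$x^+_{[i,j],l}=\frac{l}{[2l]}[h_{[i,j],l},u^+_{S^i_j}].$$

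Expanding the commutator $[x^+_{*,k},x^+_{[i,j],l}]$ by the Jacobi identity then gives
$$[x^+_{*,k},x^+_{[i,j],l}]=\frac{l}{[2l]}\Bigl([[x^+_{*,k},h_{[i,j],l}],u^+_{S^i_j}]+[h_{[i,j],l},[x^+_{*,k},u^+_{S^i_j}]]\Bigr).$$
The first inner bracket vanishes by the previous lemma, which states exactly $[h_{[i,j],l},x^+_{*,k}]=0$ for $j\geq 2$ and $l<0$. Everything thus reduces to verifying $[x^+_{*,k},u^+_{S^i_j}]=0$ in the positive Hall algebra.

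That last identity is a short Hall-product computation. The hypothesis $2\leq j\leq p_i-1$ together with Lemma~\ref{lem comute Euler form} gives $\langle[\mathscr{O}(k\vec{c})],[S^i_j]\rangle=\langle[S^i_j],[\mathscr{O}(k\vec{c})]\rangle=0$. Combined with the obvious vanishings $\Hom(S^i_j,\mathscr{O}(k\vec{c}))=0$ (torsion into a line bundle) and $\Ext^1(\mathscr{O}(k\vec{c}),S^i_j)=0$ (by Serre duality, as the Serre twist of a torsion sheaf remains torsion and admits no map to a line bundle), the Euler-form identities force $\Ext^1(S^i_j,\mathscr{O}(k\vec{c}))=0$ as well. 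Consequently both products $u^+_{\mathscr{O}(k\vec{c})}u^+_{S^i_j}$ and $u^+_{S^i_j}u^+_{\mathscr{O}(k\vec{c})}$ receive only the split-extension contribution, with equal Euler-form twists ($v^0=1$), hence coincide; so $[x^+_{*,k},u^+_{S^i_j}]=0$. Both Jacobi summands vanish, completing the proof. The main structural obstacle has already been absorbed into the previous lemma, and what remains is a transparent consequence of the loop relations inside the $i$-th tube plus a routine Ext-vanishing calculation.
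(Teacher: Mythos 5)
Your proof is correct and follows essentially the same route as the paper's: reduce to the $l=0$ identity $[x^+_{*,k},u^+_{S^i_j}]=0$ (a trivial Hall-product computation, since for $2\le j\le p_i-1$ all Hom and Ext groups between $\mathscr{O}(k\vec{c})$ and $S^i_j$ vanish, as the paper already noted in the proof of Lemma \ref{lem x^+_ij x^+ast and h_ij x^+ast j geq 2}) by applying the adjoint action of a negative-mode Cartan-loop element of the $i$-th tube and invoking the preceding lemma $[h_{[i,j],l},x^+_{*,k}]=0$. If anything your choice of $\ad(h_{[i,j],l})$ is cleaner than the paper's literal $\ad(h_{[i,1],l-1})$, which produces $x^+_{[i,j],l}$ directly only when $j=2$; with $h_{[i,j],l}$ both Jacobi terms vanish for every $j\ge 2$ at once.
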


\begin{proof}
Just apply $\ad(h_{[i,1],l-1})$ to $[x^+_{*,k},x^+_{[i,j],0}]=0$.
\end{proof}

\subsection{}
Finally we deal with the following relation:
$$\Sym_{k_1,....,k_n}\sum^n_{t=0}(-1)^t\left[ \begin{array}{c} n \\
t\\ \end{array}\right]x^\pm_{i,k_1}\cdots
x^\pm_{i,k_t}x^\pm_{j,l}x^\pm_{i,k_{t+1}}\cdots x^\pm_{i,k_n}=0,$$
where $i\neq j$ and $n=1-a_{ij}$.

\begin{Lem}
For any $k_1\leq 0$,$1\leq i\leq n,k_2,t\in\mathbb{Z}$
$$\Sym_{k_1,k_2}\{ x^+_{[i,1],k_1}x^+_{[i,1],k_2}x^+_{*,t}
-[2]x^+_{[i,1],k_1}x^+_{*,t}x^+_{[i,1],k_2}
+x^+_{*,t}x^+_{[i,1],k_1}x^+_{[i,1],k_2}\}=0.$$
\end{Lem}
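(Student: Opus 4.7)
The plan is to reduce this Serre identity to Schiffmann's Serre relation (part (c) of \ref{subsec known relations}), which covers the case $k_1, k_2 \geq 0$, by letting the derivations $D_l := [h_{[i,1], l},-]$ (for various $l < 0$) propagate the vanishing to negative indices. Write $R(a, b, t)$ for the symmetrized expression on the left-hand side.

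The crucial ingredient is that for $l < 0$ both commutators
$$[h_{[i,1], l}, x^+_{[i,1], k}] = \tfrac{[2l]}{l}\, x^+_{[i,1], k+l}, \qquad [h_{[i,1], l}, x^+_{*, t}] = \tfrac{[-l]}{l}\, x^+_{*, t+l}$$
are already at our disposal: the first is a Drinfeld relation inside the embedded $U_v(\widehat{\mathfrak{sl}}_{p_i})$-copy (Corollary \ref{Cor relations in a tube}), and the second was proved in the lemma just above in this section. Since $D_l$ is a derivation and Sym commutes with derivations, applying $D_l$ term-by-term to $R(a, b, t)$ yields the master identity
\begin{equation*}
D_l\bigl(R(a, b, t)\bigr) = \tfrac{[2l]}{l}\bigl[R(a+l, b, t) + R(a, b+l, t)\bigr] + \tfrac{[-l]}{l}\, R(a, b, t+l). \qquad(\star)
\end{equation*}

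The conclusion will follow in three short inductive steps. First, specialize $(a,b,t)$ to $(0,0,t)$ in $(\star)$: the left side vanishes because $R(0,0,t)=0$ by Schiffmann, the final right-hand term vanishes for the same reason applied at $t+l$, and the remaining two terms coincide by the symmetry $R(a,b,t)=R(b,a,t)$, giving $R(l,0,t)=0$ for all $l<0$ and all $t$. Second, for $l_1,l_2<0$ apply $D_{l_2}$ to the just-obtained identity $R(l_1,0,t)=0$; in $(\star)$ the terms $R(l_1+l_2,0,t)$ and $R(l_1,0,t+l_2)$ vanish by Step~1, forcing $R(l_1,l_2,t)=0$. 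Third, for $k_1<0$ and $k_2>0$ apply $D_{k_1}$ to Schiffmann's $R(0,k_2,t)=0$; the term $R(0,k_2,t+k_1)$ vanishes by Schiffmann, and $R(0,k_2+k_1,t)$ vanishes either by Schiffmann or by Step~1 according as $k_2+k_1\geq 0$ or $k_2+k_1<0$, leaving $R(k_1,k_2,t)=0$. Combined with the symmetry of $R$ in its first two arguments, these three steps exhaust every configuration with $k_1\leq 0$.

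The only real work is verifying $(\star)$, which is pure bookkeeping once the two commutator formulas are in hand; no auxiliary identities beyond those already established in section \ref{sec relations in a tube} and earlier in the current section are required.
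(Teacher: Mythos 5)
Your argument is correct and follows essentially the same route as the paper's: both start from Schiffmann's relation $R(0,0,t)=0$ and push the indices down by applying a loop--Cartan derivation twice, each time using already-established instances of the relation to kill all terms except the new one. The only difference is which derivation is used --- you take $\ad(h_{[i,1],l})$ while the paper takes $\ad(h_{*,l})$, so the coefficients $\tfrac{[2l]}{l}$ and $\tfrac{[-l]}{l}$ trade places --- and both of the commutators you invoke are indeed available beforehand, from Corollary \ref{Cor relations in a tube} and from the lemmas proved earlier in this section.
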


\begin{proof}
For $k_1=0$, $k_2=0$ and $t\in \mb{Z}$, we know that
$$\Sym_{0,0}\{ x^+_{[i,1],0}x^+_{[i,1],0}x^+_{*,t}
-[2]x^+_{[i,1],0}x^+_{*,t}x^+_{[i,1],0}
+x^+_{*,t}x^+_{[i,1],0}x^+_{[i,1],0}\}=0.$$

Apply $\ad(h_{*,k_1})$, we get
\begin{equation*}
\begin{split}
0=&-\frac{[k_1]}{k_1}\Sym_{k_1,0}\{
x^+_{[i,1],k_1}x^+_{[i,1],0}x^+_{*,t}
-[2]x^+_{[i,1],k_1}x^+_{*,t}x^+_{[i,1],0}\\
&\ \ \ \ \ \ \ \ \ \ +x^+_{*,t}x^+_{[i,1],k_1}x^+_{[i,1],0}\}\\
&+\frac{[2k_1]}{k_1}\Sym_{0,0}\{
x^+_{[i,1],0}x^+_{[i,1],0}x^+_{*,t+k_1}
-[2]x^+_{[i,1],0}x^+_{*,t+k_1}x^+_{[i,1],0}\\
&\ \ \ \ \ \ \ \ \ \ +x^+_{*,t+k_1}x^+_{[i,1],0}x^+_{[i,1],0}\}.
\end{split}
\end{equation*}

Hence we have
$$\Sym_{k_1,0}\{x^+_{[i,1],k_1}x^+_{[i,1],0}x^+_{*,t}
-[2]x^+_{[i,1],k_1}x^+_{*,t}x^+_{[i,1],0}
+x^+_{*,t}x^+_{[i,1],k_1}x^+_{[i,1],0}\}=0.$$

Then apply $\ad(h_{*,k_2})$ to the above equation, which yields
\begin{equation*}
\begin{split}
0=&-\frac{[k_2]}{k_2}\Sym_{k_1+k_2,0}\{
x^+_{[i,1],k_1+k_2}x^+_{[i,1],0}x^+_{*,t}-[2]x^+_{[i,1],k_1+k_2}x^+_{*,t}x^+_{[i,1],0}\\
&\ \ \ \ \ \ \ \ \ \ +x^+_{*,t}x^+_{[i,1],k_1+k_2}x^+_{[i,1],0}\}\\
&+\frac{[2k_2]}{k_2}\Sym_{k_1,0}\{
x^+_{[i,1],k_1}x^+_{[i,1],0}x^+_{*,t+k_2}
-[2]x^+_{[i,1],k_1}x^+_{*,t+k_2}x^+_{[i,1],0}\\
&\ \ \ \ \ \ \ \ \ \ +x^+_{*,t+k_2}x^+_{[i,1],k_1}x^+_{[i,1],0}\}\\
&-\frac{[k_2]}{k_2}\Sym_{k_1,k_2}\{
x^+_{[i,1],k_1}x^+_{[i,1],k_2}x^+_{*,t}
-[2]x^+_{[i,1],k_1}x^+_{*,t}x^+_{[i,1],k_2}\\
&\ \ \ \ \ \ \ \ \ \ +x^+_{*,t}x^+_{[i,1],k_1}x^+_{[i,1],k_2}\}.
\end{split}
\end{equation*}

The proof is completed.
\end{proof}

\begin{Lem}
For any $r<0$,$1\leq i\leq n,t_1,t_2\in\mathbb{Z}$
$$\Sym_{t_1,t_2}\{ x^+_{*,t_1}x^+_{*,t_2}x^+_{[i,1],r}
-[2]x^+_{*,t_1}x^+_{[i,1],r}x^+_{*,t_2}
+x^+_{[i,1],r}x^+_{*,t_1}x^+_{*,t_2}\}=0.$$
\end{Lem}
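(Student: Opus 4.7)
The plan is to deduce this relation for $r<0$ from the already-known case $r=0$ (established by Schiffmann in relation (c) of \ref{subsec known relations}, valid for all $t_1,t_2\in\mathbb{Z}$) by applying the derivation $\ad(h_{[i,1],r})$. Abbreviate
$$R(t_1,t_2,r):=\Sym_{t_1,t_2}\bigl\{x^+_{*,t_1}x^+_{*,t_2}x^+_{[i,1],r}-[2]x^+_{*,t_1}x^+_{[i,1],r}x^+_{*,t_2}+x^+_{[i,1],r}x^+_{*,t_1}x^+_{*,t_2}\bigr\}.$$
Thus by the known case, $R(s,t,0)=0$ for every $s,t\in\mathbb{Z}$.

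The tube relations of section \ref{sec relations in a tube} together with the lemma immediately preceding (namely $[h_{[i,1],l},x^+_{*,k}]=\tfrac{1}{l}[-l]\,x^+_{*,k+l}$ for $l<0$) provide the two "adjoint-action" formulas
$$[h_{[i,1],r},x^+_{*,t}]=\tfrac{-[r]}{r}\,x^+_{*,t+r},\qquad [h_{[i,1],r},x^+_{[i,1],0}]=\tfrac{[2r]}{r}\,x^+_{[i,1],r}.$$
Since $\ad(h_{[i,1],r})$ is a derivation, applying it term by term to $R(t_1,t_2,0)$ and collecting contributions according to which factor is hit yields
$$0=\ad(h_{[i,1],r})\,R(t_1,t_2,0)=\tfrac{-[r]}{r}\bigl(R(t_1+r,t_2,0)+R(t_1,t_2+r,0)\bigr)+\tfrac{[2r]}{r}\,R(t_1,t_2,r).$$
Here the two shifted copies of $R(\cdot,\cdot,0)$ arise after one unpacks $\Sym_{t_1,t_2}\{\cdot\}=(\cdot)|_{(t_1,t_2)}+(\cdot)|_{(t_2,t_1)}$, shifts one index at a time, and regroups the four resulting terms into two fully-symmetrized blocks. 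Both blocks vanish by the $r=0$ case, so the first summand is zero and we conclude $R(t_1,t_2,r)=0$ because $[2r]/r\neq 0$ for $r\neq 0$.

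The strategy is essentially the same in spirit as the preceding lemma, which used $\ad(h_{*,k_1})$ and $\ad(h_{*,k_2})$ to shift the $x^+_{[i,1],\ast}$ indices; here we instead use a single application of $\ad(h_{[i,1],r})$ to shift the exponent of $x^+_{[i,1],0}$ into the negative range. I do not expect a serious obstacle: the only thing to be careful about is the bookkeeping in step showing that the two "unwanted" contributions recombine into $R(t_1+r,t_2,0)+R(t_1,t_2+r,0)$ under symmetrization; this is a direct consequence of $\Sym_{t_1,t_2}$ being the sum over the two orderings and costs no new input beyond the $r=0$ relation.
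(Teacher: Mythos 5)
Your proof is correct and follows essentially the same route as the paper: apply a single adjoint action by a degree-$r\delta$ element (with $r<0$) to the known $r=0$ Serre relation, note that the two shifted copies of the $r=0$ relation vanish, and divide by the nonzero coefficient $\frac{[2r]}{r}$. The paper nominally applies $\ad(h_{*,l})$ rather than $\ad(h_{[i,1],l})$, but the coefficients appearing in its displayed computation are precisely the ones your choice $\ad(h_{[i,1],r})$ produces, so the two arguments coincide.
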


\begin{proof}
For any $t_1,$ $t_2$, the following holds:
$$\Sym_{t_1,t_2}\{
x^+_{*,t_1}x^+_{*,t_2}x^+_{[i,1],0}
-[2]x^+_{*,t_1}x^+_{[i,1],0}x^+_{*,t_2}
+x^+_{[i,1],0}x^+_{*,t_1}x^+_{*,t_2}\}=0.$$

Now apply $\ad(h_{*,l})$, we get
\begin{equation*}
\begin{split}
0=&-\frac{[l]}{l}\Sym_{t_1+l,t_2}\{
x^+_{*,t_1+l}x^+_{*,t_2}x^+_{[i,1],0}
-[2]x^+_{*,t_1+l}x^+_{[i,1],0}x^+_{*,t_2}
+x^+_{[i,1],0}x^+_{*,t_1+l}x^+_{*,t_2}\}\\
&-\frac{[l]}{l}\Sym_{t_1,t_2+l}\{
x^+_{*,t_1}x^+_{*,t_2+l}x^+_{[i,1],0}
-[2]x^+_{*,t_1}x^+_{[i,1],0}x^+_{*,t_2+l}
+x^+_{[i,1],0}x^+_{*,t_1}x^+_{*,t_2+l}\}\\
&+\frac{[2l]}{l}\Sym_{t_1,t_2}\{ x^+_{*,t_1}x^+_{*,t_2}x^+_{[i,1],l}
-[2]x^+_{*,t_1}x^+_{[i,1],l}x^+_{*,t_2}
+x^+_{[i,1],l}x^+_{*,t_1}x^+_{*,t_2}\}.
\end{split}
\end{equation*}

Thus the relation holds for all $l<0$.
\end{proof}

\section{Remarks on derived equivalence and PBW-basis}\label{sec remarks}
In this section we restrict to the case that $\mathfrak{g}$ is of
finite type, i.e. $\mathcal{L}\mathfrak{g}$ is an affine Kac-Moody
algebra.

\subsection{Derived equivalences and double Hall algebras}\label{subsec derived eq}
In this case, the associated star-shaped Dynkin diagram $\Gamma$ is
of type A-D-E. Denote by $\widehat{\Gamma}$ the corresponding extended Dynkin
diagram. We know that the category
$\Coh(\mathbb{X})$ is derived equivalent to $\modl\Lambda$ where
$\Lambda$ is the path algebra of $\widehat{\Gamma}$ (hence $\Lambda$
is a tame hereditary algebra). More precisely, let $\mu$ be the
\emph{slope} function for coherent sheaves and $\chi$ be the
\emph{Euler characteristic} of the weighted projective line
$\mathbb{X}$ (see \cite{GL} for missing definitions). We
have the following theorem:

\begin{Thm}[\cite{GL}]\label{thm derived eq}
The direct sum $T$ of a representative system of indecomposable
bundles $E$ with slope $0\leq \mu(E)<\chi$ is a tilting object of
$\Coh(\mathbb{X})$ whose endomorphism ring is isomorphic to
$\Lambda$. Thus we
have$$\mathscr{D}^{b}(\Coh(\mathbb{X}))\simeq\mathscr{D}^{b}(\modl\Lambda).$$
\end{Thm}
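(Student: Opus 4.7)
The plan is to verify that $T$ is a classical tilting object in the hereditary category $\Coh(\mathbb{X})$ and then invoke Happel's tilting theorem. Recall that for a Hom-finite, Ext-finite hereditary abelian category $\mathscr{A}$, an object $T$ is tilting provided (i) $\Ext^{1}(T,T)=0$ and (ii) the smallest full triangulated subcategory of $\mathscr{D}^{b}(\mathscr{A})$ containing $T$ and closed under direct summands equals the whole derived category. Once these hold, the functor $\mathrm{R}\Hom(T,-)$ induces an equivalence $\mathscr{D}^{b}(\mathscr{A})\simeq\mathscr{D}^{b}(\modl\End(T))$.

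First I would establish the rigidity condition $\Ext^{1}(T,T)=0$ using Serre duality for $\Coh(\mathbb{X})$. For $\mathbb{X}$ of parabolic type ($\chi>0$) one has a Serre functor given by twisting with the dualizing sheaf $\omega_{\mathbb{X}}$, which shifts slope by $-\chi$; that is, $\Ext^{1}(E,F)\cong D\Hom(F,E\otimes\omega_{\mathbb{X}})$. For any two indecomposable summands $E,F$ of $T$ we have $0\leq\mu(F)<\chi$ and $\mu(E\otimes\omega_{\mathbb{X}})=\mu(E)-\chi<0\leq\mu(F)$. Since nonzero morphisms between indecomposable bundles are only possible when the source has slope $\leq$ the target, the right-hand $\Hom$ vanishes, giving $\Ext^{1}(E,F)=0$.

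Next I would verify that $T$ generates $\mathscr{D}^{b}(\Coh(\mathbb{X}))$. By Lemma~\ref{lem free and torsion} every coherent sheaf splits as a bundle plus a torsion sheaf, so it suffices to reach (a) all line bundles $\mathscr{O}(\vec{x})$ for $\vec{x}\in L(\mathbf{p})$ and (b) all simple torsion sheaves $S_{x},S^{i}_{j}$. For (b), each of the short exact sequences of \ref{subsec indecomposable torsion sheaf} expresses a simple torsion sheaf as a two-term complex of line bundles, so (b) reduces to (a). For (a), note that by definition of $T$ the summands cover line bundles of slopes in $[0,\chi)$; applying the Auslander--Reiten translation $\tau=-\otimes\omega_{\mathbb{X}}$ (and its inverse) repeatedly shifts the slope range, and every line bundle eventually lies in some shifted window. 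Thus by induction on how many applications of $\tau^{\pm 1}$ are needed, every line bundle is in the triangulated hull of $T$.

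Finally, one must identify $\End(T)$ with the path algebra $\Lambda$ of $\widehat{\Gamma}$. This is the main combinatorial obstacle: one enumerates the indecomposable bundles of slope in $[0,\chi)$ and computes the $\Hom$-spaces between them. Using the generators $x_{i}$ and the defining relations of $S(\mathbf{p},\underline{\lambda})$, one finds exactly $1+\sum_{i=1}^{n}(p_{i}-1)$ isoclasses, matching the vertex set of $\widehat{\Gamma}$, with one-dimensional $\Hom$-spaces precisely along the edges of $\widehat{\Gamma}$ and no higher compositions (since $\Ext$ vanishes and $\widehat{\Gamma}$ is a tree). Hence the quiver of $\End(T)$ is $\widehat{\Gamma}$ with no relations, proving $\End(T)\cong\Lambda$. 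The derived equivalence then follows from Happel's theorem. The hardest part is the explicit identification of the $\Hom$-quiver with $\widehat{\Gamma}$ in the exceptional branches, which requires tracking how the shifts $\vec{x}_{i}$ act on the chosen representatives.
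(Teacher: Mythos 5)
First, a point of comparison: the paper offers no proof of this statement at all --- it is quoted directly from Geigle--Lenzing \cite{GL}, so there is no internal argument to measure yours against. Your overall strategy (rigidity via Serre duality, generation of the derived category, identification of $\End(T)$, then Happel's theorem) is indeed the standard shape of such a proof, and the Serre-duality step $\Ext^{1}(E,F)\cong D\Hom(F,E\otimes\omega_{\mathbb{X}})$ combined with $\mu(E)-\chi<0\leq\mu(F)$ is correct, granted the true but unproved fact that in the domestic case every indecomposable bundle is stable, so that $\Hom$ vanishes from strictly larger to smaller slope.

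However, there are genuine gaps. (1) The generation step is circular: the thick subcategory $\langle T\rangle$ of $\mathscr{D}^{b}(\Coh(\mathbb{X}))$ is closed under cones, shifts and summands, but \emph{not} under the autoequivalence $\tau=-\otimes\omega_{\mathbb{X}}$; ``apply $\tau^{\pm1}$ repeatedly'' is only legitimate once you already know $\tau^{\pm1}T\in\langle T\rangle$, which is essentially the assertion to be proved. One must instead climb through the sequences $0\to\mathscr{O}(\vec{x})\to\mathscr{O}(\vec{x}+\vec{x}_{i})\to S^{i}_{j}\to 0$ of \ref{subsec indecomposable torsion sheaf}, or show that the perpendicular category of $T$ vanishes, or count indecomposable summands against $\mathrm{rank}\,K_{0}$. (2) The enumeration of summands is wrong on two counts: the correct number is $2+\sum_{i}(p_{i}-1)=\mathrm{rank}\,K_{0}(\Coh(\mathbb{X}))$ (Proposition \ref{prop Grothendieck group}), not $1+\sum_{i}(p_{i}-1)$; and, more seriously, for types $D$ and $E$ the window $[0,\chi)$ necessarily contains indecomposable bundles of rank at least $2$ --- for weight type $(2,2,2)$ one has $\chi=1$, exactly four line bundles of degree $0$, and the fifth summand is a rank-two bundle of slope $1/2$ --- so an enumeration ``using the generators $x_{i}$'' of line-bundle twists cannot see all the summands, and the identification of the $\Hom$-quiver with $\widehat{\Gamma}$ is not established. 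Finally, $\widehat{\Gamma}$ is a cycle, not a tree, in type $A$, so ``no relations because $\widehat{\Gamma}$ is a tree'' is not a valid argument; the absence of relations in $\End(T)$ has to come from the explicit $\Hom$ computation you deferred.
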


Recently Cramer has proved the following result, which asserts that
the double Hall algebra is invariant under derived equivalences.

\begin{Thm}[\cite{C}]\label{thm Cramer iso}
Let $\mathscr{A}$ and $\mathscr{B}$ be two $k-$linear finitary
hereditary categories. Assume that there is
an equivalence of triangulated categories
$D^b(\mathscr{A})\xrightarrow{\mathbb{F}}D^b(\mathscr{B})$. Let
$R(\mathscr{A})\xrightarrow{\widehat{\mathbb{F}}}R(\mathscr{B})$be
the induced equivalence of the root categories. Then there is an
algebra isomorphism. $\mathbb{F}:\dh(\mathscr{A})\longrightarrow
\dh(\mathscr{B})$ uniquely determined by the following property. For
any object $X$ in $\mathscr{A}$ such that $\mathbb{F}(X)\simeq
\widehat{X}[-n_{\mathbb{F}}(X)]$ with $\widehat{X}$ in
$\mathscr{B}$ and $n_{\mathbb{F}}(X)\in \mathbb{Z}$ we have:\\
$$\mathbb{F}(u^{\pm}_{X})=v^{-n_{\mathbb{F}}(X)<\widehat{X},\widehat{X}>}u_{\widehat{X}}^{\pm \overline{n_{\mathbb{F}}(X)}}K^{\pm
n_{\mathbb{F}}(X)}_{\widehat{X}}$$ \\
where $\overline{n_{\mathbb{F}}(X)}=+$ (resp. $-$) if $n_{\mathbb{F}}(X)$ is
even (resp. odd). For $\alpha\in
K_0(\mathscr{A})$, we have
$\mathbb{F}(K_\alpha)=K_{\mathbb{F}(\alpha)}$.
\end{Thm}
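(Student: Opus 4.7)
The plan is to define $\mathbb{F}$ on generators by the displayed formula, extend it to the torus by $\mathbb{F}(K_\alpha)=K_{\widehat{\mathbb{F}}(\alpha)}$ (using that a triangulated equivalence induces an isometry of Grothendieck groups for the Euler form, up to sign corrections from odd shifts), and then verify that the defining relations of $\dh(\mca)$ --- the positive and negative Hall products, the torus commutations, and the Drinfeld double cross relations $D(u^-_\alpha,u^+_\beta)$ --- are all carried to valid identities in $\dh(\mathscr{B})$. Uniqueness is automatic, since $\{u_X^\pm,K_\alpha\}$ generate $\dh(\mca)$, and bijectivity follows by applying the same construction to a quasi-inverse of $\mathbb{F}$ and checking on generators that the two maps are mutually inverse.

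The technical engine is a derived-category interpretation of Hall numbers (due to To\"en, with a $2$-periodic refinement by Peng--Xiao), which expresses $g^Z_{XY}$ as a groupoid count of distinguished triangles $Y\to Z\to X\to Y[1]$ in $D^b(\mca)$. Combined with the fact that the automorphism orders $a_X$ and the Euler form are triangulated invariants, this allows Hall products to transport through $\mathbb{F}$. When $X,Y\in\mca$ satisfy $n_\mathbb{F}(X)\equiv n_\mathbb{F}(Y)\pmod 2$, so their images lie in the same $\pm$-sector of $\dh(\mathscr{B})$, compatibility with the Hall product becomes straightforward: once the scalar $v^{-n_\mathbb{F}\langle\widehat{X},\widehat{X}\rangle}$ and the accompanying $K$-twists are stripped off, one reduces to the equality of Hall numbers in $\mca$ and $\mathscr{B}$ given by the triangle groupoid formula. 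The torus relations reduce to $\widehat{\mathbb{F}}$ being an isometry, and the identification $K_\alpha\otimes 1 = 1\otimes K_{-\alpha}$ in the reduced double is preserved by construction.

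The main obstacle is the mixed-parity case: a pure product $u_X^+u_Y^+$ in $\dh(\mca)$ must be matched, after applying $\mathbb{F}$, with a \emph{mixed} product $u_{\widehat{X}}^{\pm}u_{\widehat{Y}}^{\mp}$ (times a monomial in the $K$'s) in $\dh(\mathscr{B})$. Here one cannot avoid invoking the Drinfeld double cross relation $D(u^-_\alpha,u^+_\beta)$, which unfolds via the skew-Hopf pairing $\varphi$ and Green's comultiplication formula. The strategy is to put both sides into normal form with respect to the triangular decomposition $\mathbf{H}^+\otimes\mathbf{T}\otimes\mathbf{H}^-$ and match term by term, using the derived-category reformulation of $g^\gamma_{\alpha\beta}$ and the fact that $\varphi$ depends only on $\dim\Hom$ and $\dim\Ext^1$ --- data intrinsic to the triangulated category. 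The prefactor $v^{-n_\mathbb{F}(X)\langle\widehat{X},\widehat{X}\rangle}$ and the sign $\overline{n_\mathbb{F}(X)}$ are precisely what is needed to absorb the $K$-twists produced when commuting a $u^-$ past a $u^+$ across the identification of tori, and the content of the theorem is that these normalizations align the two expansions exactly. This mixed-parity verification, together with the identification of the comultiplication sums on both sides, is the step that will require the most care.
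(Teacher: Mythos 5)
First, a point of calibration: this statement is not proved in the paper at all --- it is Cramer's theorem, quoted verbatim from \cite{C} as an external input, so there is no internal proof to measure your attempt against. Judged on its own terms, your sketch has the right overall shape (define $\mathbb{F}$ on the $u_X^{\pm}$ and the $K_\alpha$, check the Hall, torus and cross relations, and isolate the mixed-parity sector as the hard case), but the technical engine you invoke does not exist. To\"en's derived Hall algebra counts triangles in $D^b(\mathscr{A})$ only under a left-finiteness hypothesis, and, crucially, it does \emph{not} admit the ``$2$-periodic refinement'' you attribute to Peng--Xiao: for the root category $R(\mathscr{A})$ the analogous triangle-count multiplication fails to be associative. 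This is precisely why Peng--Xiao only extract a Lie algebra from the root category, and why recovering the full Drinfeld double from a triangulated category remained an open problem (later addressed by entirely different means, e.g.\ Bridgeland's $\mathbb{Z}/2$-graded complexes of projectives). So the claim that Hall products ``transport through $\mathbb{F}$'' because $g^{\gamma}_{\alpha\beta}$ is a groupoid count of triangles is unavailable exactly where you need it, namely when $\widehat{X}$ and $\widehat{Y}$ land in different shifts.

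That mixed-parity sector is where the entire content of the theorem lies, and your treatment of it (``put both sides into normal form \dots and match term by term'') restates the goal rather than proving it. Cramer's actual argument does not verify relations object-by-object against a derived count; it first reduces, using the action of shifts and the hereditarity of the target, to the case where the image of one heart sits in $\mathscr{B}\vee\mathscr{B}[1]$, so that the equivalence is governed by a torsion pair $(\mathcal{T},\mathcal{F})$, and then proves the key computational lemma --- that the elements $u_T^{+}$ for $T\in\mathcal{T}$ together with suitably $K$-twisted elements $u_F^{-}$ for $F\in\mathcal{F}$ span a subalgebra of the double isomorphic to the Hall algebra of the tilted heart --- by a direct application of Green's formula and the skew-Hopf pairing, exploiting the defining vanishing $\Hom(\mathcal{T},\mathcal{F})=0$. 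Without either that reduction or some substitute for it, your plan contains no mechanism for actually establishing the cross relations $D(u^-_\alpha,u^+_\beta)$ across the two sectors, so as written the proof does not go through.
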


\subsection{Incompatibility of some homomorphisms}\label{subsec compatibility of iso}
Now let us consider the following diagram
\begin{equation*}
\xymatrix{
  U_{v}(\hat{\mathfrak{g}}) \ar[d]_{\psi} \ar[r]^-{\Omega} & \mathbf{DC}(\modl\Lambda) \ar[r]^{\iota_{1}} & \mathbf{DH}(\modl\Lambda) \ar[d]^{\mathbb{F}} \\
  U_{v}(\mathcal{L}\mathfrak{g}) \ar[r]^-{\Xi}            & \mathbf{DC}(\Coh(\mathbb{X})) \ar[r]^{\iota_{2}}        & \mathbf{DH}(\Coh(\mathbb{X}))   }
\end{equation*}
where $\psi$ is the Drinfeld-Beck isomorphism, $\Omega$ is the
isomorphism in Theorem \ref{thm Xiao}, $\Xi$ is the epimorphism
given by Theorem\ref{thm main}, $\iota_{1}$, $\iota_{2}$ are
natural embeddings and $\mathbb{F}$ is the isomorphism in Theorem \ref{thm Cramer iso}.

When $\mathfrak{g}=\mathfrak{sl}_{2}$, we know that $\Lambda$ is the
path algebra of the Kronecker quiver and $\mathbb{X}$ is the (non-weighted)
projective line $\mathbb{P}^{1}$. In this case it has been proved in
\cite{BS} that $\Xi$ is an isomorphism and the above diagram is commutative. This is equivalent
to say that the restriction of the isomorphism $\mathbb{F}$ to
$\mathbf{DC}(\modl\Lambda)$ gives the isomorphism
$$\Xi\circ\psi\circ\Omega^{-1}:\mathbf{DC}(\modl\Lambda)\simeq\mathbf{DC}(\Coh(\mathbb{X})).$$

However, for the other cases, the diagram may not be commutative even if $\Xi$ is an isomorphism. The reason is as follows:

Denote by $E_{m}$ the Chevalley generators of the standard positive part
$U_{v}^{+}(\hat{\mathfrak{g}})$. Here $m\in\Gamma_{0}\cup\{e\}$, where $e$ denotes the extending vertex of
$\widehat{\Gamma}$. By definition of $\Omega$, the image of each
$E_{m}$ in $\mathbf{DH}(\modl\Lambda)$ is a simple $\Lambda$-module.

On the other hand, the Drinfeld-Beck isomorphism $\psi$ sends
$E_{m}$ to $x_{m,0}^{+}$ for all $m$ except the extending vertex.
Now if $m$ is not the central vertex $\ast$, by Theorem \ref{thm main}, the image of
$x_{m,0}^{+}$ in $\mathbf{DH}(\Coh(\mathbb{X}))$ under the
homomorphism $\Xi$ is a simple sheaf lying on the bottom of
some non-homogeneous tube.

Thus if the diagram is commutative, we should have a derived equivalence functor
$\mathbb{G}:\mathscr{D}^{b}(\modl\Lambda)\simeq\mathscr{D}^{b}(\Coh(\mathbb{X}))$
assigning all simple $\Lambda$-modules, except two (corresponding to the vertex $\ast$ and $e$), to sheaves on the
bottom of non-homogeneous tubes. However, this is
impossible for type $D$ and $E$.

Note that in \cite{BS2} it has been proved that the isomorphism $\mathbb{F}$ restricts to an isomorphism
$\mathbf{DC}(\modl\Lambda)\simeq\mathbf{DC}(\Coh(\mathbb{X}))$, which we still denote by $\mathbb{F}$.
Then the composition $\mathbb{F}\circ\Omega\circ\psi^{-1}$ gives an isomorphism $U_{v}(\mathcal{L}\mathfrak{g})\simeq\mathbf{DC}(\Coh(\mathbb{X}))$. But it is difficult to explicitly find out Drinfeld's generators and relations in $\mathbf{DC}(\Coh(\mathbb{X}))$ through this isomorphism.

\subsection{Two PBW-type bases}\label{subsec PBW-basis}
Now we have two realizations of the quantum affine algebra
$U_{v}(\hat{\mathfrak{g}})$ arising from Hall algebras of two different
hereditary categories which are derived equivalent. Note that the
derived equivalence
$\mathscr{D}^{b}(\Coh(\mathbb{X}))\simeq\mathscr{D}^{b}(\modl\Lambda)$
is ``visible" if one looks at the Auslander-Reiten-quivers.
Recall that the AR-quiver of $\Coh(\mathbb{X})$ consisting of two
components, the locally free part $\mathscr{F}$ and the torsion part
$\mathscr{T}$. While there are three components in the AR-quiver of
$\modl\Lambda$, namely the preprojective component $\mathscr{P}$,
the preinjective one $\mathscr{I}$ and the regular one
$\mathscr{R}$. Roughly speaking, the derived equivalence is given by
splitting the component $\mathscr{F}$ into two pieces corresponding to $\mathscr{P}$ and
$\mathscr{I}[-1]$ respectively, and identifying $\mathscr{T}$ with
$\mathscr{R}$.

The Hall algebra approach has many advantages. For example, the Hall
algebra has a natural basis indexed by the isomorphism classes of
objects in the category. Moreover, the structure of the category (e.g. the AR-quiver) give us more information.
In particular, one can construct a PBW-type
basis encoding the structure of the category. This has been done in
\cite{LXZ} for the Hall algebra of $\modl\Lambda$.

\begin{Prop}[\cite{LXZ}]\label{prop PBW-basis module cat}
The following set of elements
$$\{u_{P}E_{\pi_{1}\mathbf{c}}
E_{\pi_{2}\mathbf{c}}\cdots E_{\pi_{n}\mathbf{c}}T_{\omega}u_{I}\}$$
is a basis of the composition algebra $\mathbf{C}(\modl\Lambda)$,
i.e. a basis of the (standard) positive part
$U_{v}^{+}(\hat{\mathfrak{g}})$.
\end{Prop}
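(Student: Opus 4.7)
The plan is to exploit the well-known triangular decomposition of $\modl\Lambda$ into its three Auslander--Reiten components $\mathscr{P}$, $\mathscr{R}$, $\mathscr{I}$, and then to upgrade this to a decomposition of the \emph{composition} subalgebra rather than the full Hall algebra. Recall the vanishing pattern: $\Hom(\mathscr{R},\mathscr{P})=\Hom(\mathscr{I},\mathscr{P})=\Hom(\mathscr{I},\mathscr{R})=0$ and dually for $\Ext^{1}$ in the opposite direction. Consequently the multiplication map
\[
\mathbf{H}(\mathscr{P})\otimes \mathbf{H}(\mathscr{R})\otimes \mathbf{H}(\mathscr{I})\longrightarrow \mathbf{H}(\modl\Lambda)
\]
is an isomorphism of $\mathbb{C}$-vector spaces, and the basis $\{u_{P}u_{R}u_{I}\}$ indexed by triples of isomorphism classes provides the ambient PBW-type basis of the full Hall algebra.

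Next I would argue each of the three factors separately. For $\mathbf{C}(\modl\Lambda)\cap \mathbf{H}(\mathscr{P})$: by Ringel's PBW theorem for acyclic quivers together with Ringel's identification of preprojective indecomposables with real root vectors, the set $\{u_{P}\}_{P\in\mathscr{P}}$ is a basis, and it lies in the composition subalgebra because each preprojective indecomposable is obtained by iterated application of BGP reflection functors (equivalently Lusztig symmetries) to simples. The preinjective side is dual. For the regular part, in each homogeneous tube the indecomposables are in bijection with partitions, and via P.~Hall's isomorphism $\Lambda_{\text{sym}}\simeq \mathbf{H}(\mathscr{T}_{x})$ the intersection with the composition algebra is exactly the subalgebra generated by the image of the power sums; summing over all homogeneous tubes yields the central generators $E_{\pi \mathbf{c}}$ indexed by partitions $\pi$ of the null root. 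For the finitely many non-homogeneous tubes one uses the embedding $\mathbf{H}(C_{p_{i}}) \hookrightarrow \mathbf{C}(\modl\Lambda)$ coming from the simple representations supported in the tube, and the Hubery-type construction of section \ref{sec relations in a tube} produces a basis $\{T_\omega\}$ indexed by the appropriate tuple $\omega$ of partitions.

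Having identified the three generating families, I would then assemble them: the product $u_{P}\, E_{\pi_{1}\mathbf{c}}\cdots E_{\pi_{n}\mathbf{c}}\, T_{\omega}\, u_{I}$ expands in the Hall-algebra basis as $u_{P}u_{R}u_{I}$ plus terms whose preprojective / preinjective parts are strictly smaller in a suitable partial order coming from the Euler form. Linear independence follows from this triangularity combined with the linear independence of $\{u_{P}u_{R}u_{I}\}$. Spanning follows from dimension count: the graded dimensions of $\mathbf{C}(\modl\Lambda)$ match those of $U_{v}^{+}(\hat{\mathfrak{g}})$ via the Ringel--Green isomorphism (Theorem \ref{thm Ringel-Green}), and the number of admissible tuples $(P,\pi_{1},\ldots,\pi_{n},\omega,I)$ of total weight $\alpha\in \widehat{Q}^{+}$ matches Kac's formula for $\dim U_{v}^{+}(\hat{\mathfrak{g}})_{\alpha}$.

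The main obstacle is the third step in the previous paragraph: one must verify that the candidate set \emph{exhausts} $\mathbf{C}(\modl\Lambda)\cap \mathbf{H}(\mathscr{R})$. The intersection of the composition subalgebra with the regular part is strictly smaller than $\mathbf{H}(\mathscr{R})$ (the individual simples in a homogeneous tube are not composition elements, only symmetric combinations of them are), and showing that the central $E_{\pi\mathbf{c}}$'s together with the non-homogeneous $T_\omega$'s are enough requires a careful Macdonald-style argument identifying the composition part of a homogeneous tube with $\mathbb{C}[p_{1},p_{2},\ldots]$ under Hall's isomorphism and then extending this analysis across the finitely many exceptional tubes. Once this is done, the triangularity argument outlined above concludes the proof.
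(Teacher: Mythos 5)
This proposition is imported verbatim from \cite{LXZ}; the paper gives no proof of it, so your proposal can only be measured against the argument in that reference. Your overall strategy --- the vector-space triangular decomposition $\mathbf{H}(\mathscr{P})\otimes\mathbf{H}(\mathscr{R})\otimes\mathbf{H}(\mathscr{I})\to\mathbf{H}(\modl\Lambda)$ induced by the Hom/Ext vanishing between Auslander--Reiten components, membership of the candidate elements in $\mathbf{C}(\modl\Lambda)$, linear independence by triangularity, and spanning by a graded dimension count against $\dim U_{v}^{+}(\hat{\mathfrak{g}})_{\alpha}$ computed from Kac's root multiplicities --- is essentially the strategy of \cite{LXZ}, so the route is the right one.

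Two points need correction or strengthening. First, you have the roles of the two families of regular elements reversed: in the notation of this paper (and of \cite{LXZ}, \cite{DDX}), the $E_{\pi_{i}\mathbf{c}}$ are attached to the $n$ \emph{non-homogeneous} tubes $\mathscr{T}_{\lambda_{i}}$ (they are the canonical-basis-type elements of \cite{DDX} built inside $\mathbf{H}(C_{p_{i}})$), while $T_{\omega}=T_{\omega_{1}}\cdots T_{\omega_{t}}$ is built from $T_{r}=\sum_{x}\mathbf{h}_{r,x}$, the sum over \emph{all} closed points, homogeneous tubes included; your text assigns the power-sum construction over homogeneous tubes to $E_{\pi\mathbf{c}}$ and the non-homogeneous data to $T_{\omega}$. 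Second, the step you single out as the main obstacle --- determining $\mathbf{C}(\modl\Lambda)\cap\mathbf{H}(\mathscr{R})$ exactly --- is not actually needed and is in fact harder than what the dimension count requires: once one knows (i) that every $u_{P}$, $u_{I}$ and every $E_{\pi_{i}\mathbf{c}}$, $T_{r}$ lies in $\mathbf{C}(\modl\Lambda)$, and (ii) that the number of tuples $(P,\pi_{1},\dots,\pi_{n},\omega,I)$ of total class $\alpha$ equals $\dim U_{v}^{+}(\hat{\mathfrak{g}})_{\alpha}$ (using that the imaginary root $k\delta$ has multiplicity $1+\sum_{i}(p_{i}-1)$, matched by one partition entry for $\omega$ plus the tube contributions), linear independence forces the set to be a basis without any component-wise exhaustion argument. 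Items (i) and (ii) are precisely the nontrivial content of \cite{LXZ} and \cite{DDX} --- in particular $u_{P}\in\mathbf{C}(\modl\Lambda)$ for an arbitrary (decomposable) preprojective $P$ is a theorem, not a formal consequence of reflection functors --- and your sketch asserts rather than establishes them.
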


Let us briefly explain the notations in the above proposition: $P$ runs over all
preprojective modules and $I$ runs over all preinjective modules.
Hence $u_{P}$, $u_{I}$ are basis elements arising from preprojective
and preinjective components respectively. For each $i$, the $E_{\pi_{i}\mathbf{c}}$ is a certain element in
$\mathbf{H}(\mathscr{T}_{\lambda_{i}})$, where $\mathscr{T}_{\lambda_{i}}$ is a non-homogeneous tube. These elements were first
constructed in \cite{DDX}, we omit the explicit definition here.
$\omega=(\omega_{1}\geq\omega_{2}\geq\cdots\geq\omega_{t})$ runs over all
partitions of positive integers. And
$T_{\omega}=T_{\omega_{1}}T_{\omega_{2}}\cdots T_{\omega_{t}}$, where
$T_{r}$ is the element defined in \ref{subsec element T_r}.
Note that the regular component $\mathscr{R}$ is equivalent to the
torsion part $\mathscr{T}$.

Similarly we can construct
a PBW-type basis for another positive part of $U_{v}(\hat{\mathfrak{g}})$ using the Hall algebra of $\Coh(\mathbb{X})$:

\begin{Prop}\label{prop PBW-basis coh sheaf}
The following set of elements
$$\{u_{V}E_{\pi_{1}\mathbf{c}}
E_{\pi_{2}\mathbf{c}}\cdots
E_{\pi_{n}\mathbf{c}}T_{\omega}\}$$
is a basis of the composition algebra $\mathbf{C}(\Coh(\mathbb{X}))$, i.e. a basis of
$U_{v}(\hat{\mathfrak{n}})$.
\end{Prop}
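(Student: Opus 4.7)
The plan is to adapt the \cite{LXZ} argument underlying Proposition \ref{prop PBW-basis module cat}, but starting from the structural decomposition of $\Coh(\mathbb{X})$ rather than that of $\modl\Lambda$. The resulting basis matches the expected shape because the AR-quivers of the two categories are related in the transparent way described at the start of \ref{subsec PBW-basis}: the locally free part $\mathscr{F}$ plays the role of $\mathscr{P} \sqcup \mathscr{I}$, the non-homogeneous tubes $\mathscr{T}_{\lambda_{i}}$ match the corresponding tubes in $\modl\Lambda$, and the homogeneous tubes match the ordinary regular tubes.

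First, I would use Lemmas \ref{lem free and torsion} and \ref{lem structure of torsion} together with the Ext-vanishing properties (namely $\Ext^{1}(\mathscr{F}, \mathscr{T}) = 0$ and the mutual orthogonality $\Hom = \Ext^{1} = 0$ between distinct tubes) to obtain a vector-space triangular decomposition
$$\mathbf{H}(\Coh(\mathbb{X})) \cong \mathbf{H}(\mathscr{F}) \otimes \bigotimes_{i=1}^{n} \mathbf{H}(\mathscr{T}_{\lambda_{i}}) \otimes \mathbf{H}\Bigl(\coprod_{x \text{ ordinary}} \mathscr{T}_{x}\Bigr).$$
Intersecting with the composition subalgebra $\mathbf{C}(\Coh(\mathbb{X}))$, which by Theorem \ref{thm Schiffmann} is generated by $u_{\mathcal{O}(k\vec{c})}$, $u_{S^{i}_{j}}$ and $T_{r}$, the same vanishing yields an analogous triangular decomposition of $\mathbf{C}(\Coh(\mathbb{X}))$.

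Next I would identify a basis of each tensor factor. The factor sitting inside each $\mathbf{H}(\mathscr{T}_{\lambda_{i}})$ is generated by $u_{S^{i}_{j}}$ and is isomorphic to $U_{v}^{+}(\widehat{\mathfrak{sl}}_{p_{i}})$ by Ringel--Green, and it admits $\{E_{\pi_{i}\mathbf{c}}\}$ as a PBW basis following the construction of \cite{DDX}. The factor inside the homogeneous-tube part is generated by the $T_{r}$'s, which mutually commute and play the role of primitive power-sum generators, so the ordered products $\{T_{\omega \delta}\}$ indexed by partitions $\omega$ form a basis. Finally, the factor inside $\mathbf{H}(\mathscr{F})$ is generated by iterated Hall products of $u_{\mathcal{O}(k\vec{c})}$ together with the exceptional simples $u_{S^{i}_{j}}$, which reach all line bundles $\mathcal{O}(\vec{x})$ via the defining exact sequences of \ref{subsec indecomposable torsion sheaf} and then all locally free sheaves by further extensions; the claim to verify is that $\{u_{V}\}_{V \in \mathscr{F}}$ is a basis of this factor. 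Assembling the three bases along the triangular decomposition then produces the asserted PBW basis.

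The main obstacle is precisely the last identification. Because $\mathbf{C}(\Coh(\mathbb{X}))$ is generated only by the specific line bundles $\mathcal{O}(k\vec{c})$, one must both show that every $u_{V}$ for $V$ locally free actually lies in $\mathbf{C}(\Coh(\mathbb{X}))$ via products involving the exceptional torsion generators, and that the resulting family $\{u_{V}\}$ remains linearly independent modulo the two tube-factors. I expect this to be handled by a $\widehat{Q}$-grading argument: comparing graded dimensions of the proposed basis with those of $U_{v}(\hat{\mathfrak{n}})$, whose graded character can be read off from \cite{S}, forces equality and hence the PBW property. An alternative and possibly cleaner route is to transport the basis of Proposition \ref{prop PBW-basis module cat} across the double-Hall isomorphism $\mathbb{F}$ of Theorem \ref{thm Cramer iso} and track the images summand by summand using Cramer's explicit formula; the torsion summands go through essentially untouched, while $u_{P}$ and $u_{I}$ combine (modulo the torus twists arising from the $[-1]$ shift on the preinjective side) to produce the $u_{V}$ for $V \in \mathscr{F}$.
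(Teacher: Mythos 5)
The paper actually offers no proof of this proposition: it is stated as an observation in \ref{subsec PBW-basis}, parallel to the \cite{LXZ} result, so there is no argument of the authors' to compare yours against; judged on its own, your proposal has the right overall shape (adapt \cite{LXZ} to the decomposition $\Coh(\mathbb{X})=\mathscr{F}\vee\mathscr{T}$) but the genuinely hard steps are left open and two concrete claims are wrong. First, $T_{r}=\sum_{x}\mathbf{h}_{r,x}$ is by definition a sum over \emph{all} closed points, including the exceptional ones, so $T_{\omega\delta}$ is not supported on the homogeneous tubes and your three-factor splitting of the torsion part, with $\{T_{\omega\delta}\}$ as a basis of a separate "homogeneous" factor, is not available as stated; the interplay between the $T_{r}$ and the $E_{\pi_{i}\mathbf{c}}$ inside each $\mathbf{H}(\mathscr{T}_{\lambda_{i}})$ is exactly the delicate point of the \cite{LXZ} argument and does not disappear here. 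Relatedly, intersecting a triangular decomposition of $\mathbf{H}(\Coh(\mathbb{X}))$ with the subalgebra $\mathbf{C}(\Coh(\mathbb{X}))$ does not automatically yield a triangular decomposition of the latter; that compatibility is part of what must be proved.

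Second, the alternative route through Theorem \ref{thm Cramer iso} cannot work as you describe. A preinjective module $I$ sits in an odd homological shift under the tilting equivalence, so by Cramer's formula $\mathbb{F}(u^{+}_{I})$ is, up to a power of $v$, of the form $u^{-}_{V}K^{-1}_{V}$ for the corresponding bundle $V$: it lands in the \emph{negative} half times the torus, not in $\mathbf{H}^{+}(\Coh(\mathbb{X}))$ "modulo torus twists". Hence the image of the basis of Proposition \ref{prop PBW-basis module cat} consists of elements $u^{+}_{V_{1}}(\cdots)u^{-}_{V_{2}}K_{\mu}$, which are not of the asserted shape without a nontrivial re-straightening in $\dh(\Coh(\mathbb{X}))$; and \ref{subsec compatibility of iso} of this very paper shows that for types $D$ and $E$ the simple $\Lambda$-modules are not matched by $\mathbb{F}$ with the generators of $\mathbf{C}(\Coh(\mathbb{X}))$, so one may not assume $\mathbb{F}$ identifies the two composition algebras. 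Finally, the step you correctly single out as the main obstacle --- that $u_{V}\in\mathbf{C}(\Coh(\mathbb{X}))$ for every locally free $V$ and that the listed products span --- is not closed by the graded dimension count you propose: comparing characters with $U_{v}(\hat{\mathfrak{n}})$ only upgrades "a linearly independent subset of $\mathbf{C}(\Coh(\mathbb{X}))$ of the right graded size" to "a basis", and therefore presupposes the containment and spanning statements rather than delivering them.
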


In this proposition, $V$ runs over all vector bundles in
$\mathscr{F}$. Other notations are the same as Proposition \ref{prop
PBW-basis module cat}.

Comparing the above two propositions, we can see that the PBW-type bases of two
different halves of the quantum affine algebra are related by the derived
equivalence of the two categories $\modl\Lambda$ and $\Coh(\mathbb{X})$.

\bigskip
\par\noindent {\bf Acknowledgments.}
We would like to thank Professor Olivier Schiffmann for his reading and remarks.

\end{document}